\begin{document}
\setlength{\oddsidemargin}{0cm} \setlength{\evensidemargin}{0cm}
\baselineskip=20pt

\theoremstyle{plain} \makeatletter
\newtheorem{theorem}{Theorem}[section]
\newtheorem{Proposition}[theorem]{Proposition}
\newtheorem{Lemma}[theorem]{Lemma}
\newtheorem{Corollary}[theorem]{Corollary}
\newtheorem{Question}[theorem]{Question}

\theoremstyle{definition}
\newtheorem{notation}[theorem]{Notation}
\newtheorem{exam}[theorem]{Example}
\newtheorem{proposition}[theorem]{Proposition}
\newtheorem{conj}{Conjecture}
\newtheorem{prob}[theorem]{Problem}
\newtheorem{remark}[theorem]{Remark}
\newtheorem{claim}{Claim}
\newtheorem{Definition}[theorem]{Definition}

\newcommand{\SO}{{\mathrm S}{\mathrm O}}
\newcommand{\SU}{{\mathrm S}{\mathrm U}}
\newcommand{\Sp}{{\mathrm S}{\mathrm p}}
\newcommand{\so}{{\mathfrak s}{\mathfrak o}}
\newcommand{\Ad}{{\mathrm A}{\mathrm d}}
\newcommand{\m}{{\mathfrak m}}
\newcommand{\g}{{\mathfrak g}}
\newcommand{\h}{{\mathfrak h}}


\numberwithin{equation}{section}
\title[Curvatures of metric Jordan algebras]{Curvatures of metric Jordan algebras}
\author{Hui Zhang}
\address [Hui Zhang]{School of Mathematics, Southeast University, Nanjing 210096, P.R. China}\email{huizhang@mail.nankai.edu.cn}
\author{Zaili Yan}
\address[Zaili Yan]{ School of Mathematics and Statistics, Ningbo University, Ningbo, Zhejiang Province, 315211,  People's Republic of China}\email{yanzaili@nbu.edu.cn}
\author{Zhiqi Chen$^{*}$}
\address[Zhiqi Chen]{School of Mathematics and Statistics, Guangdong University of Technology, Guangzhou 510520, P.R. China}\email{chenzhiqi@nankai.edu.cn}
\thanks{* Corresponding author}

\begin{abstract}
In this paper, we study  metric Jordan algebras,  i.e.,  Jordan algebras  with an inner product, and draw parallels to metric Lie algebras   studied   extensively for  understanding left-invariant Riemannian metrics on Lie groups.  We first define the Jordan-Levi-Civita connection on metric Jordan algebras which share  analogous construction   of invariant super-connections on Lie super-groups, then show its uniqueness. Utilizing this connection,  we introduce three natural curvature tensors on metric Jordan algebras, and obtain  the corresponding   formulas. Based on the   curvature formulas, we prove that every formally real Jordan algebra admits   a metric of non-positive Jordan curvature and a Jordan-Einstein metric of negative Jordan  scalar curvature. Moreover, for nilpotent Jordan algebras, we prove that they  admit no Jordan-Einstein metrics.
\end{abstract}
\subjclass[2010]{17C37, 17C50, 53C25, 53C99, 22E60.}
\keywords{Metric Jordan algebra; Jordan-Levi-Civita connection; Jordan curvature; Jordan-Einstein metric; Formally real Jordan algebra; Nilpotent Jordan algebra.}
\maketitle

\section{Introduction}

A \textit{left symmetric algebra} (write LSA for short)  is   a vector space $V$  with a  binary operation $(x,y)\rightarrow xy$  satisfying
\begin{align}\label{+}
(xy)z-x(yz)=(yx)z-y(xz),~~\forall x,y,z\in V.
\end{align}
Define the associator as $(x,y,z):=(xy)z-x(yz),~\forall x,y,z\in V.$
Then equation (\ref{+}) is exactly the following identity: $$(x,y,z)=(y,x,z),~\forall x,y,z\in V.$$
That is,  the associator  is symmetric in the left two variables. It is well-known that the commutator of a  left symmetric  algebra defines  a natural Lie algebra structure, i.e.,
\begin{align*}
[x,y]:=xy-yx, ~~\forall x,y\in V.
\end{align*}
For this reason, left symmetric algebras are also referred to as \textit{pre-Lie algebras} in the literature. As emphasized  in \cite{B2021,Burde2006,Vinberg}, left symmetric algebras hold  significance in both   geometry and physics. Among the important results, we mention the following theorem

\begin{theorem}\label{CL}
For a Lie group  $G$  with Lie algebra $\g,$  there is a one-to-one correspondence between   left-invariant  flat torsion-free affine connections  on $G$ \textnormal{(}or just $\g$\textnormal{)} and  LSA-structures on $\g$.
\end{theorem}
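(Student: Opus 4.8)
The plan is to establish the bijection at the level of the bilinear product on $\g$ and then translate the geometric conditions (torsion-free and flat) into algebraic identities on that product. The starting observation is that the left-invariant vector fields form a global frame of $TG$, so any affine connection is determined by its Christoffel symbols in this frame, and a left-invariant connection $\nabla$ is determined by the values $\nabla_X Y$ as $X,Y$ range over left-invariant fields. Since left-invariance forces $\nabla_X Y$ to be left-invariant whenever $X,Y$ are, evaluating at the identity $e$ yields a well-defined bilinear map $\g\times\g\to\g$, $(x,y)\mapsto xy:=(\nabla_X Y)_e$, where $X,Y$ denote the left-invariant extensions of $x,y$. Conversely, any bilinear product on $\g$ prescribes \emph{constant} Christoffel symbols in the left-invariant frame, hence a unique connection $\nabla$, which is left-invariant precisely because these symbols are constant. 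This gives a bijection between left-invariant connections on $G$ and bilinear products on $\g$; it then remains to match the two geometric conditions with the defining identities of an LSA whose commutator is the bracket of $\g$.

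Next I would compute the torsion. For left-invariant $X,Y$ the torsion $T(X,Y)=\nabla_X Y-\nabla_Y X-[X,Y]$ is again left-invariant, and evaluating at $e$ gives $T(x,y)=xy-yx-[x,y]$. Hence $\nabla$ is torsion-free if and only if $xy-yx=[x,y]$, i.e. the commutator of the product recovers the Lie bracket of $\g$. Then I would compute the curvature: for left-invariant $X,Y,Z$ the tensor $R(X,Y)Z=\nabla_X\nabla_Y Z-\nabla_Y\nabla_X Z-\nabla_{[X,Y]}Z$ is left-invariant, and at $e$ it reads $R(x,y)z=x(yz)-y(xz)-[x,y]z$. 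Substituting the torsion-free relation $[x,y]=xy-yx$, the flatness condition $R\equiv 0$ becomes
\[
x(yz)-y(xz)=(xy)z-(yx)z,
\]
which rearranges to
\[
(xy)z-x(yz)=(yx)z-y(xz),
\]
i.e. exactly the left-symmetric identity (\ref{+}). Therefore $\nabla$ is flat and torsion-free precisely when the associated product satisfies (\ref{+}) and has commutator equal to the bracket of $\g$, so the bijection of the first step restricts to the asserted one-to-one correspondence with LSA-structures on $\g$.

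The main obstacle is the first step: one must verify carefully that left-invariance of $\nabla$ is equivalent to the constancy of the Christoffel symbols in the left-invariant frame, and that the Koszul axioms (function-linearity in the first slot and the Leibniz rule in the second) are preserved when the product on left-invariant fields is extended to arbitrary vector fields. Once this is in place, the torsion and curvature computations are routine, since both tensors are left-invariant and their values at $e$ depend only on the product $xy=\nabla_x y$ and the bracket $[x,y]$.
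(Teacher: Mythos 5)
Your proof is correct, and it is the standard classical argument; note that the paper itself never proves Theorem~\ref{CL} --- it is quoted as a known result from the cited literature (Burde, Vinberg), so there is no in-paper proof to compare against. Your algebraic core --- reducing a left-invariant connection to the bilinear product $xy:=(\nabla_X Y)_e$ on $\g$, then translating torsion-freeness into $xy-yx=[x,y]$ and flatness into the identity (\ref{+}) --- is precisely the translation the paper performs later for its Jordan analogue (Corollary~\ref{analogy}), where the connection is taken to be a bilinear map from the outset and the flat, torsion-free conditions are shown to be equivalent to the LSSA identity (\ref{ZFD}).
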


 Modifying slightly the equation (\ref{+}) by a sign, i.e.,
\begin{align}\label{-}
(xy)z-x(yz)=-(yx)z+y(xz),~~\forall x,y,z\in V,
\end{align}
or equivalently,  $(x,y,z)=-(y,x,z),~\forall x,y,z\in V,$  it turns out that the new  operation
\begin{align*}
x\circ y:=xy+yx, ~~\forall x,y\in V
\end{align*}
 defines on $V$ a Jordan algebra structure (see Thm.~\ref{ZFDCDS}).
This  is  expected, see  the following diagram.
\begin{center}\setlength{\unitlength}{1.1mm}
\begin{picture}(145,55)
\put(0,40){\framebox(25,10){$\begin{array}{cc}\text{Flat torsion free}\\ \text{connections.}\end{array}$}}
\put(25,45){\begin{tikzpicture}
 \draw[<->,thick, black] (0,0) to (1.65, 0);
\end{tikzpicture}}
\put(25,45){\begin{tikzpicture}
 \filldraw[black]  node[anchor=west]{Thm.~\ref{CL}};
\end{tikzpicture}}
\put(40,40){\framebox(30,10){$\begin{array}{cc}\text{Left symmetric}\\ \text{algebras}\end{array}$}}
\put(70,25){\begin{tikzpicture}
 \draw[->,thick, black, postaction={decorate,decoration={raise=1ex,text along path,text align=center,{ text={ generalize}}}}] (0,0) to (1.6, -2.18);
\end{tikzpicture}}
\put(70,45){\begin{tikzpicture}
 \draw[->,thick, black,postaction={decorate,decoration={raise=1ex,text along path,text align=center,{ text={ induce}}}}] (0,0) to (5.5, 0);
\end{tikzpicture}}
\put(0,0){\framebox(25,10){$\begin{array}{cc}\textbf{?}\end{array}$}}
\put(25,5){\begin{tikzpicture}
 \draw[<->,thick, black] (0,0) to (1.65, 0);
\end{tikzpicture}}
\put(25,5){\begin{tikzpicture}
 \filldraw[black]  node[anchor=west]{Ques.\ref{JT}};
\end{tikzpicture}}
\put(70,5){\begin{tikzpicture}
 \draw[->,thick, black,postaction={decorate,decoration={raise=-2ex,text along path,text align=center,{ text={ generalize}}}}] (0,0) to (1.6, 2.18);
\end{tikzpicture}}
\put(40,0){\framebox(30,10){$\begin{array}{cc}\text{Left skew- }\\ \text{symmetric algebras}\end{array}$}}
\put(70,5){\begin{tikzpicture}
 \draw[->,thick, black, postaction={decorate,decoration={raise=1ex,text along path,text align=center,{ text={ induce}}}}] (0,0) to (5.5, 0);
\end{tikzpicture}}
\put(85,20){\framebox(20,10){$\begin{array}{cc}\text{Associative}\\ \text{algebras} \end{array}$}}
\put(105,25){\begin{tikzpicture}
 \draw[->,thick, black,postaction={decorate,decoration={raise=1ex,text along path,text align=center,{ text={ induce}}}}] (0,0) to (1.6, 2.16);
\end{tikzpicture}}
\put(105,5.5){\begin{tikzpicture}
 \draw[->,thick, black,postaction={decorate,decoration={raise=-2ex,text along path,text align=center,{ text={ induce}}}}] (0,0) to (1.6, -2.16);
\end{tikzpicture}}
\put(120,40){\framebox(25,10){$\begin{array}{cc}\text{Lie algebras}\\ {[x,y]:=xy-yx}\end{array}$}}
\put(120,0){\framebox(25,10){$\begin{array}{cc}\text{Jordan algebras}\\ x\circ y:=xy+yx\end{array}$}}
\end{picture}
\end{center}
Here, a left skew-symmetric algebra  means an algebra that satisfies the equation (\ref{-}), see Def.~\ref{lssa}. Naturally, one may ask
\begin{Question}\label{JT}
Is   there  an  analogy of Theorem~\textnormal{\ref{CL}} for Jordan algebras?
\end{Question}


On the other hand, if $(G,\langle\cdot,\cdot\rangle)$ be a connected Lie group with a left-invariant Riemannian metric,  then we may  identify $(G,\langle\cdot,\cdot\rangle)$ with the metric Lie algebra $(\g,\langle\cdot,\cdot\rangle)$.
The Ricci  curvature of $(G,\langle\cdot,\cdot\rangle)$   can be simply expressed  in terms of  the Lie brackets  of $\g$ and the inner product $\langle\cdot,\cdot\rangle$. In particular, the Ricci operator of $(\g,\langle\cdot,\cdot\rangle)$ can be written  as follows (see \cite{besse87book,Lauret2010})
\begin{align}\label{Ricciformula0}
\textnormal{Ric}=\textnormal{M}-\frac{1}{2}B-S(\operatorname{ad}H),
\end{align}
where $B$ is the operator defined by  the Killing form of $\g$, and $H$ is the mean curvature vector of $(\g,\langle\cdot,\cdot\rangle)$.
The first term on the right side of equation (\ref{Ricciformula0}) remained mysterious until Lauret, in \cite{Lauret03,Lauret2003}, revealed that the map \textnormal{M} coincides up to a scalar with the \textit{moment map} for the variety of Lie algebras. Using
this observation, he proved that every Einstein solvmanifold is standard
(\cite{Lauret2010}), and later, provided a complete characterization of solvsolitons (\cite{Lauret2011}).
As noted  in \cite{Lauret2020}, the moment map can be naturally extended in other classes of algebras such
as associative algebras, Jordan algebras and etc, which have recently been investigated  in some works, see \cite{GKM,ZY,ZCL}.
In light of  this, one may also ask
\begin{Question}\label{Ques}
{Does  there exist an analogous `Ricci curvature tensor' on  metric Jordan algebras?}
\end{Question}


Recall that an algebra $\mathscr{A}$ is said to be a \textit{Jordan algebra}  if, for all $X,Y$ in $\mathscr{A}$,
\begin{align*}
X\circ Y = Y\circ  X,\quad
X\circ((X\circ X)\circ Y) = (X\circ X)\circ (X\circ Y).
\end{align*}
Jordan algebra was   introduced by P. Jordan in an attempt to generalize the formalism of
quantum mechanics in 1930s (\cite{J1933,JVP1934}), which later has been proved to be very versatile in both mathematics and physics. For instance, the link between Jordan algebras, symmetric spaces and harmonic analysis
\cite{Bertram2000,Chu2008,Chu2012,Chu2021,Chu2022,FK1994,Koecher1999}, the connection between Jordan algebras and quantum theories \cite{Baez2022,FFMP2014}, and the role Jordan algebras  in information geometry \cite{CJS2023}. We also refer to \cite{Ior2011}
for the applications of Jordan algebras in other fields.

This paper focuses on {metric Jordan algebras}, and we shall study  metric Jordan algebras in an analogy of metric Lie algebras;  the latter have been extensively studied in the literature in order to understand left-invariant Riemannian metrics on Lie groups (see  \cite{M76}).
See  TABLE~\ref{LieJor} below for a comparison of  the well-known results in metric Lie algebras with the  ones obtained in this paper for  metric Jordan algebras.
 \begin{table}[H]\label{LieJor}
	\centering
	\begin{tabularx}{\textwidth}{|XlX|}
		\toprule
	 \quad \quad\quad\quad\textbf{Metric Lie algebras} &\textbf{V.S.} &  \quad \quad\quad\quad\textbf{Metric Jordan algebras}  \\
		\midrule
 Every metric Lie algebra admits a unique Levi-Civita connection& & Every metric Jordan algebra admits a unique
Jordan-Levi-Civita connection\\
		\midrule
          Riemann curvature tensor, Ricci curvature \quad tensor,
scalar curvature&& Jordan curvature tensor, Jordan Ricci curvature tensor, Jordan scalar curvature\\
        \midrule
 For an associative inner product on $\g$, the
Levi-Civita connection is: $\nabla_XY=\frac{1}{2}[X,Y]$, $\forall X,Y\in\g$&& For an associative inner product on $\mathscr{A}$, the Jordan-Levi-Civita connection is: $\nabla_XY=\frac{1}{2}XY$, $\forall X,Y\in\mathscr{A}$\\
         \midrule
 Every compact Lie algebra admits a metric of
non-negative Riemann curvature
 && Every formally real Jordan algebra  admits a
metric of non-positive Jordan curvature        \\
          \midrule
 Every compact simple Lie algebra admits a \quad
Einstein metric of positive scalar curvature,  which is given by the Killing form
        && Every simple formally real Jordan algebra
\quad admits a Jordan-Einstein metric of negative
Jordan scalar curvature \\
           \midrule
     A nontrivial nilpotent Lie algebra admits no \quad
 Einstein metrics&&A nontrivial nilpotent Jordan algebra admits no Jordan-Einstein metrics   \\
		\bottomrule
	\end{tabularx}
	\caption{Lie structures v.s. Jordan structures}
\end{table}%
We also note that algebras  with a scalar product (pseudo-inner product, symplectic structure etc),
are of special interests in both geometry and
physics (see \cite{BG2018,BGZ2019,DFMR2009, Drin1983, Fischer2019a,Fischer2019b, KO2006,MP1985}).

This paper is organised as follows: In Sect.~\ref{Basic}, we recall some basic concepts and results of the moment map (in algebras), and the Jordan algebras, respectively.

In Sect.~\ref{Connection}, we first introduce the  concepts of \textit{connection}, \textit{torsion-free connection}, and \textit{Jordan-Levi-Civita connection} on Jordan algebras (Def.~\ref{JLCC}), which share  analogous construction   of invariant super-connections on super-manifolds corresponding to the odd component (see \cite{Go}). Then we show
that every metric Jordan algebra admits a unique Jordan-Levi-Civita connection (Thm.~\ref{unique}), which might be regarded  as a special case of \cite[Thm~1]{Go} within the context of invariant metrics on Lie super-groups. Utilizing the connection,  three natural  quantities   on metric Jordan algebras are introduced,
i.e., \textit{Jordan curvature tensor} (Def.~\ref{JCT}), \textit{Jordan Ricci curvature tensor} (Def.~\ref{JRT}), and \textit{Jordan scalar curvature} (Def.~\ref{JSC}).
We obtain the specific formulas for  Jordan Ricci curvature tensor (Thm.~\ref{RF})  and  Jordan scalar curvature (\ref{scf}), which  provide an appropriate  solution to Question~\ref{Ques}. Besides, we also get a  Jordan version of Theorem~\ref{CL}  (see Cor.~\ref{analogy}).

In Sect.~\ref{Curvature}, we explore Jordan curvature tensor of \textit{formally real Jordan algebras} (also called \textit{Euclidean Jordan algebras} in the literature), which are regarded
as the counterparts of compact real forms in complex semisimple Lie algebras (see Appendix~\ref{frja}). We first formulate an inequality on formally real Jordan algebras (Lemma~\ref{inequality}), which is of independent
interest. Then we show that every formally real Jordan algebra admits a metric of non-positive
Jordan curvature (Thm.~\ref{nonpositive}).

In Sect.~\ref{Ricci}, we investigate Jordan-Einstein metrics on  Jordan algebras. We show that  simple formally
real Jordan algebras of  dimension at least  two admit a Jordan-Einstein metric  of negative Jordan scalar curvature (Thm.~\ref{JEM}  and Cor.~\ref{Corofss}). For  nontrivial nilpotent Jordan algebras, we prove that they  admit no Jordan-Einstein metrics (Thm.~\ref{nilpotent}). Besides,  we construct a non-nilpotent Jordan algebra of the
semidirect form which admits a flat Jordan-Einstein metric (Ex.~\ref{semidirect}). Moreover, we show that there exists a two-dimensional complex semisimple Jordan algebra, which has   two real forms;  one admits a flat Jordan-Einstein metric, and  the other  admits a  Jordan-Einstein metric of positive Jordan scalar curvature (Ex.~\ref{Ric0+}).

In Sect.~\ref{Summary}, we collect some natural questions  for further study.
Moreover, for the use of this paper we also summarize some basic results of  left-invariant Riemnnian metrics on Lie groups in  Appendix~\ref{Appendix-Lie} and    formally real Jordan algebras in Appendix~\ref{frja}, respectively.

\section{Preliminaries}\label{Basic}
In this section, we recall some basic results of
the moment map  and the Jordan algebras, respectively. The ambient field is always assumed
to be the real number field $\mathbb{R}$ unless otherwise stated.

\subsection{The moment map in algebras}

Let $\mathbb{R}^n$ be the usual $n$-dimensional real vector space. Denote by  $V_n=\otimes^2(\mathbb{R}^n)^* \otimes \mathbb{R}^n$  the space of all bilinear maps (all binary algebras), and
$$
\mathfrak{M}_n=\{\langle\cdot, \cdot\rangle:\langle\cdot, \cdot\rangle \text { is an inner product on } \mathbb{R}^n\}
$$
 the moduli space of all inner products on $\mathbb{R}^n$, respectively. Consider the natural action of $\textnormal{GL}(n)=\textnormal{GL}(\mathbb{R}^n)$ on $V_n$ as follows
\begin{align}\label{G-action}
g.\mu(X, Y)=g \mu(g^{-1} X, g^{-1} Y), ~~\forall g \in \textnormal{GL}(n), \mu \in V_n, X, Y \in \mathbb{R}^n.
\end{align}
Then one immediately sees that  the orbit $\textnormal{GL}(n).\mu$ is precisely the isomorphism class of $\mu$. Differentiating (\ref{G-action}), we obtain the natural representation  $\pi$ of $\mathfrak{gl}(n)$ on $V_n$, i.e.,
\begin{align}\label{g-action}
(\pi(A) \mu)(X, Y)=A \mu(X, Y)-\mu(\pi(A) X, Y)-\mu(X, \pi(A) Y), ~~\forall A \in \mathfrak{gl}(n) .
\end{align}
It follows that $\pi(A) \mu=0$ if and only if $A \in \operatorname{Der}(\mu)$, that is, the derivation algebra of $\mu$. On the other hand, one knows that the linear group $\textnormal{GL}(n)$ also naturally acts on $\mathfrak{M}_n$, i.e.,
\begin{align*}
g.\langle\cdot, \cdot\rangle=\langle g^{-1}(\cdot), g^{-1}(\cdot)\rangle,~~ g \in \textnormal{GL}(n),
\end{align*}
and this action is obviously transitive.

Using the notations above,  it is not hard to verify that the  map
\begin{align*}
g^{-1}: (g.\mu, \langle\cdot, \cdot\rangle)\rightarrow (\mu, g^{-1}.\langle\cdot, \cdot\rangle), ~~\forall g \in \textnormal{GL}(n), \mu \in V_n,
\end{align*}
is an \textit{isometry} between the two metric algebras, that is, preserving the brackets and the inner products simultaneously.
This  implies the  subtle idea: \textit{varying brackets instead of metrics for the study of metric
algebras}, which was  introduced by Lauret in \cite{lauret2001}, and has profoundly influenced contemporary research in homogeneous Riemannian geometry (see \cite{BL2023,Lauret2010,Lauret2011}).

In the sequel, we fix an inner product $\langle\cdot, \cdot\rangle$ on $\mathbb{R}^n$ as a background metric. This  makes each $\mu \in V_n$
become a metric algebra. The fixed inner product  $\langle\cdot, \cdot\rangle$ on $\mathbb{R}^n$   also induces an $\textnormal{O}(n)$-invariant inner product on
$V_n$ as follows
\begin{align}\label{metric}
\langle\mu,\lambda\rangle=\sum_{i,j,k}\langle\mu(E_i,E_j),X_{k}\rangle\langle\lambda(E_i,E_j),E_{k}\rangle,\quad~\mu,\lambda\in  V_n,
\end{align}
where $\left\{E_1, E_2, \cdots, E_n\right\}$ is an arbitrary orthonormal basis of $(\mathbb{R}^n,\langle\cdot, \cdot\rangle)$. Moreover,  there is a natural  inner product on $\mathfrak{gl}(n)$, i.e.,
\begin{align}\label{gl-metric}
(A, B)=\operatorname{Tr} A B^t, ~~A, B \in \mathfrak{gl}(n)
\end{align}
which is $\textnormal{Ad}(\textnormal{O}(n))$-invariant, and the symbol $\cdot^t$ denotes the transpose  relative to $(\mathbb{R}^n,\langle\cdot, \cdot\rangle)$. Decompose $\mathfrak{gl}(n)=\mathfrak{so}(n)+\operatorname{sym}(n)$  into the direct sum of skew matrices and symmetric matrices, respectively, where
\begin{align*}
\mathfrak{so}(n)=\{A \in \mathfrak{g l}(n): A^t=-A\}, \quad \operatorname{sym}(n)=\{A \in \mathfrak{gl}(n): A^t=A\}.
\end{align*}
Then the function $m: V_n \backslash\{0\} \rightarrow \operatorname{sym}(n)$ defined by
\begin{align}\label{MomentDef}
(m(\mu), A)=\frac{\langle\pi(A) \mu, \mu\rangle}{\|\mu\|^2}, ~~~~ 0 \neq \mu \in V_n, A \in \operatorname{sym}(n),
\end{align}
is called the \textit{moment map} for the representation $V_n$ of $\mathfrak{gl}(n)$. Notice that in the complex case, the function $m$ is precisely the moment map from symplectic geometry, corresponding to the Hamiltonian action of $\textnormal{U}(n)$ on the symplectic manifold $\mathbb{P} V_n$ (see \cite{MF94}).

Now, for each $\mu \in V_n$, we associate it  a map  $\textnormal{M}_\mu \in \operatorname{sym}(n)$ as follows
\begin{align}\label{M}
\textnormal{M}_\mu=\sum_{i=1}^n L_{E_i}^\mu(L_{E_i}^\mu)^t-\sum_{i=1}^n(L_{E_i}^\mu)^t L_{E_i}^\mu-\sum_{i=1}^n(R_{E_i}^\mu)^t R_{X_i}^\mu,
\end{align}
where $\left\{E_1, E_2, \cdots, E_n\right\}$ is an arbitrary orthonormal basis of $(\mathbb{R}^n,\langle\cdot, \cdot\rangle)$,  and for any $X\in \mathbb{R}^n$  the  operators $L_X^\mu$, $R_X^\mu: \mathbb{R}^n \rightarrow \mathbb{R}^n$ are given  by $L_X^\mu(Y)=\mu(X, Y)$ and $R_X^\mu(Y)=\mu(Y, X), \forall Y \in \mathbb{R}^n$, respectively.
\begin{Lemma}[\cite{Lauret2011,ZY}]\label{M-formu}
 Let the notations be as above. Then $m(\mu)=\frac{\textnormal{M}_\mu}{\|\mu\|^2}$ for any $0 \neq \mu \in V_n$, and moreover,
\begin{align}
\langle\textnormal{M}_\mu X, Y\rangle= & \sum_{i, j=1}^n\langle\mu(E_i, E_j), X\rangle\langle\mu(E_i, E_j), Y\rangle-\sum_{i, j=1}^n\langle\mu(E_i, X), E_j\rangle\langle\mu(E_i, Y), E_j\rangle \notag\\
& -\sum_{i, j=1}^n\langle\mu(X, E_i), E_j\rangle\langle\mu(Y, E_i), E_j\rangle,~~ \forall X, Y \in \mathbb{R}^n, \label{Mf}
\end{align}
where $\left\{E_1, E_2, \cdots, E_n\right\}$ is an arbitrary orthonormal basis of $(\mathbb{R}^n,\langle\cdot, \cdot\rangle)$.
\end{Lemma}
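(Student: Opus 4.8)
The plan is to prove both assertions by unwinding the definitions and performing careful trace bookkeeping; nothing deep is involved, so the only real care lies in tracking transposes and in the final symmetry argument. I would begin with the explicit formula~(\ref{Mf}). Abbreviating $L_i:=L_{E_i}^{\mu}$ and $R_i:=R_{E_i}^{\mu}$, and recalling that $L_iE_j=\mu(E_i,E_j)=R_jE_i$, I would expand $\langle\textnormal{M}_\mu X,Y\rangle$ summand by summand. For the first piece, $\langle L_iL_i^tX,Y\rangle=\langle L_i^tX,L_i^tY\rangle$, and since $\langle L_i^tX,E_j\rangle=\langle X,\mu(E_i,E_j)\rangle$, a Parseval expansion gives $\sum_j\langle\mu(E_i,E_j),X\rangle\langle\mu(E_i,E_j),Y\rangle$; summing over $i$ yields the first sum of~(\ref{Mf}). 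Likewise $\langle L_i^tL_iX,Y\rangle=\langle L_iX,L_iY\rangle=\langle\mu(E_i,X),\mu(E_i,Y)\rangle$ and $\langle R_i^tR_iX,Y\rangle=\langle R_iX,R_iY\rangle=\langle\mu(X,E_i),\mu(Y,E_i)\rangle$, and expanding each of these inner products in the orthonormal basis produces the remaining two sums.

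Next I would prove $m(\mu)=\textnormal{M}_\mu/\|\mu\|^2$ by showing $\langle\pi(A)\mu,\mu\rangle=(\textnormal{M}_\mu,A)$ for every $A\in\operatorname{sym}(n)$. Using the inner product on $V_n$ in the collapsed form $\langle\mu,\lambda\rangle=\sum_{i,j}\langle\mu(E_i,E_j),\lambda(E_i,E_j)\rangle$ (which is just~(\ref{metric}) after summing over $k$) together with the definition of $\pi(A)$, I would split $\langle\pi(A)\mu,\mu\rangle$ into three sums. The term $\sum_{i,j}\langle A\mu(E_i,E_j),\mu(E_i,E_j)\rangle$ equals $\sum_i\operatorname{Tr}(L_i^tAL_i)=\sum_i\operatorname{Tr}(AL_iL_i^t)$, i.e. $(\sum_iL_iL_i^t,A)$. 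The term $-\sum_{i,j}\langle\mu(E_i,AE_j),\mu(E_i,E_j)\rangle$ becomes $-\sum_i\operatorname{Tr}(L_i^tL_iA)=-(\sum_iL_i^tL_i,A)$, while $-\sum_{i,j}\langle\mu(AE_i,E_j),\mu(E_i,E_j)\rangle$ becomes $-\sum_j\operatorname{Tr}(R_j^tR_jA)=-(\sum_jR_j^tR_j,A)$. Adding the three reproduces exactly the three summands defining $\textnormal{M}_\mu$, so $\langle\pi(A)\mu,\mu\rangle=(\textnormal{M}_\mu,A)$.

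To finish, I would observe that $L_iL_i^t$, $L_i^tL_i$ and $R_i^tR_i$ are all symmetric, so $\textnormal{M}_\mu\in\operatorname{sym}(n)$; hence both $\|\mu\|^2 m(\mu)$ and $\textnormal{M}_\mu$ lie in $\operatorname{sym}(n)$ and satisfy $(\|\mu\|^2 m(\mu),A)=(\textnormal{M}_\mu,A)$ for all $A\in\operatorname{sym}(n)$. Since $(\cdot,\cdot)$ restricts to a positive-definite, hence nondegenerate, inner product on $\operatorname{sym}(n)$, this yields $\|\mu\|^2 m(\mu)=\textnormal{M}_\mu$, as claimed. I do not expect any substantive obstacle, since the computations are routine Parseval and cyclic-trace manipulations. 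The two points deserving attention are the ``crossing'' in the second paragraph --- the perturbation $AE_j$ of the \emph{second} argument pairs with $L_i^tL_i$, whereas the perturbation $AE_i$ of the \emph{first} argument pairs with $R_j^tR_j$ --- and the final appeal to the symmetry of $\textnormal{M}_\mu$ and the nondegeneracy of $(\cdot,\cdot)$ on $\operatorname{sym}(n)$ needed to pass from the pairing identity to the operator identity.
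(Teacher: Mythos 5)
Your proposal is correct, and it is essentially the standard argument: the paper itself gives no proof of this lemma (it is quoted from \cite{Lauret2011,ZY}), and the verification there is precisely what you carry out — expand $\langle\pi(A)\mu,\mu\rangle$ into the three sums, convert each to a trace pairing with $\sum_i L_{E_i}^\mu(L_{E_i}^\mu)^t$, $\sum_i(L_{E_i}^\mu)^tL_{E_i}^\mu$, $\sum_i(R_{E_i}^\mu)^tR_{E_i}^\mu$ via cyclicity, and conclude $\|\mu\|^2 m(\mu)=\textnormal{M}_\mu$ from the nondegeneracy of the trace form on $\operatorname{sym}(n)$, with formula (\ref{Mf}) following by Parseval expansion. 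Your two flagged subtleties (the crossing of which perturbed argument pairs with $L^tL$ versus $R^tR$, and the need to know both sides lie in $\operatorname{sym}(n)$ before invoking nondegeneracy) are exactly the right points of care, and both are handled correctly.
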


\begin{remark}
By Lemma~\ref{M-formu}, we know that if $(\mu,\langle\cdot,\cdot\rangle)$ is a metric Lie algebra, then the map $\textnormal{M}_\mu$ in (\ref{Mf}) coincides up to a scalar with the map $\textnormal{M}$ in (\ref{Riccif}) for $(\mu,\langle\cdot, \cdot\rangle)$. As we shall see later, this also holds for metric Jordan algebras (see Remark~\ref{operator}).
\end{remark}

\subsection{Jordan algebras}  An algebra $\mathscr{A}$ is said to be a Jordan algebra if, for all $X, Y$ in $\mathscr{A}$ :
\begin{align}
XY&=YX, \label{commu}\\
X(X^2 Y)&=X^2(XY). \label{weakass}
\end{align}
Using the notation $L_X(Y)=XY$ for all $X, Y \in \mathscr{A}$, and $[S, T]=S T-T S$ for any two endomorphisms of $\mathscr{A}$, the property (\ref{weakass}) can be written $[L_X, L_{X^2}]=0$ for all $X \in \mathscr{A}$.

\begin{remark}
If $(V, \diamond)$ is an associative algebra, then one can naturally define on $V$ a Lie algebra structure and a Jordan algebra structure, respectively,  as follows
\begin{align*}
[X,Y]:=X\diamond Y-Y\diamond X, ~~\forall X, Y \in V,\quad
XY:=\frac{1}{2}(X\diamond Y+Y\diamond X), ~~\forall X, Y \in V.
\end{align*}
In general a Jordan algebra is not associative.
\end{remark}
A Jordan algebra $\mathscr{A}$ is called \textit{simple} if $\mathscr{A}^2 \neq 0$ and $\mathscr{A}$ has no nontrivial ideals, and it is called \textit{semisimple} if it is a direct product of simple Jordan algebras.

\begin{Definition}
 Let $\mathscr{A}$ be a Jordan algebra. The Jordan algebra $\mathscr{A}$ is called \textit{solvable} if $\mathscr{A}^{(m)}=0$ for some $m \in \mathbb{N}$, where $\mathscr{A}^{(0)}=\mathscr{A}, \mathscr{A}^{(k+1)}=\mathscr{A}^{(k)} \mathscr{A}^{(k)}, k \geq 0$. The Jordan algebra $\mathscr{A}$ is called \textit{nilpotent} if $\mathscr{A}^m=0$ for some $m \in \mathbb{N}$, where $\mathscr{A}^0=\mathscr{A}, \mathscr{A}^{k+1}=\mathscr{A} \mathscr{A}^k, k \geq 0$.
\end{Definition}
Unlike in the context  of Lie algebras, the concepts of solvability and nilpotency in Jordan algebras turn out to be equivalent (see \cite{Albert1946}).

\begin{Lemma}[\cite{FK1994}]\label{tau}
 Let $\mathscr{A}$ be a Jordan algebra, then the following symmetric bilinear form
\begin{align}\label{tauf}
\tau(X, Y):=\operatorname{Tr} L_{XY}, \forall X, Y \in \mathscr{A},
\end{align}
is associative, that is, $\tau(X Y, Z)=\tau(X, Y Z)$ for all $X, Y, Z \in \mathscr{A}$.
\end{Lemma}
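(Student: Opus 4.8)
The plan is to reduce the associativity of $\tau$ to the vanishing of a single trace, and to recognize the relevant associator as a commutator of multiplication operators. Symmetry of $\tau$ is immediate: since $XY=YX$ we get $\tau(X,Y)=\operatorname{Tr}L_{XY}=\operatorname{Tr}L_{YX}=\tau(Y,X)$, and bilinearity is clear because $X\mapsto L_X$ and $\operatorname{Tr}$ are linear. So the entire content is the identity $\tau(XY,Z)=\tau(X,YZ)$, which by linearity of the trace is equivalent to
\begin{align*}
\operatorname{Tr}L_{(XY)Z-X(YZ)}=0,\qquad\forall X,Y,Z\in\mathscr{A}.
\end{align*}

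First I would rewrite the associator $(XY)Z-X(YZ)$ as an operator applied to $Y$. Using commutativity $(XY)Z=Z(XY)=L_ZL_XY$ and $X(YZ)=X(ZY)=L_XL_ZY$, so that
\begin{align*}
(XY)Z-X(YZ)=(L_ZL_X-L_XL_Z)Y=[L_Z,L_X]Y.
\end{align*}
Thus it suffices to prove $\operatorname{Tr}L_{[L_Z,L_X]Y}=0$. Next I would use that $D:=[L_Z,L_X]$ is a derivation of $\mathscr{A}$. Recall that a linear map $D$ is a derivation exactly when $[D,L_W]=L_{DW}$ for all $W$, this being the Leibniz rule $D(WU)=(DW)U+W(DU)$ rewritten through the operators $L$. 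Granting that $[L_Z,L_X]$ is a derivation, applying this with $W=Y$ gives $L_{[L_Z,L_X]Y}=[\,[L_Z,L_X],L_Y\,]$, a commutator of endomorphisms of the finite-dimensional space $\mathscr{A}$, whose trace therefore vanishes. Combining the steps,
\begin{align*}
\tau(XY,Z)-\tau(X,YZ)=\operatorname{Tr}L_{[L_Z,L_X]Y}=\operatorname{Tr}[\,[L_Z,L_X],L_Y\,]=0,
\end{align*}
which is the asserted associativity.

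The one substantial input, and the step I expect to be the main obstacle, is precisely that $[L_Z,L_X]$ is a derivation. I would derive this from the linearized Jordan identity. Polarizing the axiom $[L_X,L_{X^2}]=0$ once (replace $X$ by $X+tW$ and extract the coefficient of $t$, using $(X+tW)^2=X^2+2t\,XW+t^2W^2$) yields $2[L_X,L_{XW}]+[L_W,L_{X^2}]=0$; polarizing this a second time ($X\mapsto X+sV$) and symmetrizing produces the fully linearized identity
\begin{align*}
[L_X,L_{YZ}]+[L_Y,L_{ZX}]+[L_Z,L_{XY}]=0,\qquad\forall X,Y,Z\in\mathscr{A}.
\end{align*}
From this one obtains the derivation property $[\,[L_Z,L_X],L_Y\,]=L_{[L_Z,L_X]Y}$ by the standard (if slightly tedious) multilinear Jordan-algebra computation, the essential difficulty being that the triple compositions appearing in $[\,[L_Z,L_X],L_Y\,]$ do not reduce individually but only in this antisymmetric combination.

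Alternatively, one may bypass the explicit verification by simply invoking the classical fact that every inner map $[L_a,L_b]$ is a derivation of a Jordan algebra, which is exactly how the cited source \cite{FK1994} packages this ingredient. In either formulation the concluding trace argument is the same, and nothing beyond finite-dimensionality (so that $\operatorname{Tr}$ is defined and $\operatorname{Tr}[S,T]=0$) is required.
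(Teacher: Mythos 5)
Your proof is correct, but there is nothing in the paper to compare it against: the paper states this lemma as an imported result, citing \cite{FK1994}, and gives no argument of its own. What you wrote is the classical proof, and every step checks out: the reduction to $\operatorname{Tr}L_{(XY)Z-X(YZ)}=0$, the rewriting of the associator by commutativity as $(XY)Z-X(YZ)=[L_Z,L_X]Y$, the characterization of derivations as $[D,L_W]=L_{DW}$, and the vanishing of the trace of the double commutator $[[L_Z,L_X],L_Y]$. Your first polarization and the resulting fully linearized identity are also exactly right; indeed the identity you obtain is literally item (ii) of the paper's Lemma~\ref{Fundamenta} in Appendix~\ref{frja} (stated there for arbitrary Jordan algebras and likewise cited from \cite{FK1994}). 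The only place you defer detail is the passage from that linearized identity to the derivation property of $[L_Z,L_X]$, and your instinct that this is the genuinely laborious step is sound: it does not fall out of identity (ii) at fixed arguments by formal rearrangement, since (ii) only involves single commutators $[L_\cdot,L_{\cdot\cdot}]$ while the derivation property involves triple compositions of $L$'s. The quickest way to make it explicit with the paper's own toolkit is to linearize Lemma~\ref{Fundamenta}(iii) in $X$, which gives
\begin{align*}
L_{(XZ)Y}-L_{XZ}L_Y=(L_{ZY}-L_ZL_Y)L_X+(L_{XY}-L_XL_Y)L_Z,
\end{align*}
and then to subtract the instance of this identity computing $L_{(YZ)X}=L_{X(YZ)}$ from the instance computing $L_{(XY)Z}$: all terms involving $L$'s of products cancel by commutativity, leaving precisely
\begin{align*}
L_{(XY)Z-X(YZ)}=\bigl[[L_Z,L_X],L_Y\bigr],
\end{align*}
which proves the derivation property you invoke and finishes the lemma upon taking traces. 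With that step filled in this way (or with the classical inner-derivation fact cited, as you propose and as \cite{FK1994} effectively does), your argument is complete and is the same argument the cited source packages.
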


 For a Jordan algebra $\mathscr{A}$,  we denote by $\mathscr{N}$  the \textit{radical}, i.e., the maximal nilpotent ideal of $\mathscr{A}$.  It is proved by A. Albert that the radical $\mathscr{N}$ of  $\mathscr{A}$   coincides with the kernel of the symmetric bilinear form $\tau$ in (\ref{tauf}).

 Moreover, we have the following theorem

\begin{theorem}[Wedderburn Principal Theorem] \label{WPT}
Any real Jordan algebra $\mathscr{A}$ can be written as a vector space direct sum $\mathscr{A}=\mathscr{S}+\mathscr{N}$, where $\mathscr{N}$ is the radical of $\mathscr{A}$, and $\mathscr{S}$ is a maximal semisimple subalgebra of $\mathscr{A}$ isomorphic to $\mathscr{A}/\mathscr{N}$.
\end{theorem}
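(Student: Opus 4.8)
The plan is to treat this as a splitting problem: writing $\pi:\mathscr{A}\to\mathscr{A}/\mathscr{N}$ for the quotient map, I must produce a subalgebra $\mathscr{S}\subseteq\mathscr{A}$ on which $\pi$ restricts to an isomorphism, so that $\mathscr{A}=\mathscr{S}+\mathscr{N}$ as vector spaces. Since $\mathscr{N}$ is the maximal nilpotent ideal (equal to the kernel of the associative trace form $\tau$ of Lemma~\ref{tau} by Albert's theorem), the quotient $\mathscr{A}/\mathscr{N}$ is semisimple, so the task is to split the extension of the semisimple algebra $\mathscr{A}/\mathscr{N}$ by the nilpotent ideal $\mathscr{N}$. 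The overall strategy is induction on the nilpotency index of $\mathscr{N}$, reducing to the case of a square-zero radical, where the obstruction to splitting is a second-cohomology class that vanishes by semisimplicity.

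For the inductive step I would choose a nonzero ideal $\mathscr{I}$ of $\mathscr{A}$ with $\mathscr{I}\subseteq\mathscr{N}$ and $\mathscr{I}^2=0$ (one exists because $\mathscr{N}$ is nilpotent, e.g.\ the bottom nonzero layer of its power series). Passing to $\mathscr{C}=\mathscr{A}/\mathscr{I}$, whose radical $\mathscr{N}/\mathscr{I}$ has strictly smaller nilpotency index, the inductive hypothesis supplies a semisimple subalgebra $\bar{\mathscr{S}}\subseteq\mathscr{C}$ complementing $\mathscr{N}/\mathscr{I}$. Let $\mathscr{B}\subseteq\mathscr{A}$ be the preimage of $\bar{\mathscr{S}}$ under $\mathscr{A}\to\mathscr{C}$; then $\mathscr{B}\supseteq\mathscr{I}$, the quotient $\mathscr{B}/\mathscr{I}\cong\bar{\mathscr{S}}$ is semisimple, and $\mathscr{I}$ is its radical with $\mathscr{I}^2=0$. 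A splitting of $\mathscr{B}$ over $\mathscr{I}$ then furnishes a semisimple $\mathscr{S}\subseteq\mathscr{B}\subseteq\mathscr{A}$ with $\mathscr{S}\cong\mathscr{B}/\mathscr{I}\cong\mathscr{A}/\mathscr{N}$, and a dimension count gives $\mathscr{A}=\mathscr{S}+\mathscr{N}$. Thus everything reduces to the base case $\mathscr{N}^2=0$.

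In that base case $\mathscr{N}$ is a module over $\mathscr{S}:=\mathscr{A}/\mathscr{N}$ (the action is well defined precisely because $\mathscr{N}\mathscr{N}=0$), and for any linear section $s:\mathscr{S}\to\mathscr{A}$ of $\pi$ the symmetric bilinear map
\begin{align*}
f(\bar X,\bar Y)=s(\bar X)s(\bar Y)-s(\bar X\bar Y)\in\mathscr{N}
\end{align*}
measures the failure of $s$ to be an algebra homomorphism. The Jordan identities force $f$ to be a $2$-cocycle in the Jordan cohomology of $\mathscr{S}$ with coefficients in the bimodule $\mathscr{N}$, and replacing $s$ by $s+\varphi$ for a linear $\varphi:\mathscr{S}\to\mathscr{N}$ alters $f$ by a coboundary; hence a splitting subalgebra exists if and only if $[f]\in H^2(\mathscr{S},\mathscr{N})$ vanishes.

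The crux, and the step I expect to be the main obstacle, is the vanishing $H^2(\mathscr{S},\mathscr{M})=0$ for every finite-dimensional bimodule $\mathscr{M}$ over a semisimple Jordan algebra $\mathscr{S}$ in characteristic zero. This is the Jordan analogue of the second Whitehead lemma and rests on the separability of semisimple Jordan algebras together with complete reducibility of their finite-dimensional modules; establishing it requires the structure and representation theory of semisimple Jordan algebras rather than any formal manipulation. Granting it, $[f]=0$, so the section $s$ can be corrected to an algebra homomorphism whose image is the desired maximal semisimple complement $\mathscr{S}$. Over $\mathbb{R}$ one may alternatively complexify, invoke the known vanishing over $\mathbb{C}$, and descend, since both the radical and the splitting behave well under extension of scalars.
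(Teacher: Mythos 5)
Before the critique, one contextual note: the paper offers no proof of Theorem~\ref{WPT} at all; it is quoted as the classical Wedderburn Principal Theorem for Jordan algebras (Penico's theorem, building on Albert's structure theory), so your outline has to be judged against the classical argument, which is indeed the route you sketch. Judged that way, it has two genuine gaps. The first is in your inductive step: you justify the existence of a nonzero ideal $\mathscr{I}$ of $\mathscr{A}$ with $\mathscr{I}\subseteq\mathscr{N}$ and $\mathscr{I}^2=0$ by taking ``the bottom nonzero layer of the power series'' of $\mathscr{N}$. The last nonzero power $\mathscr{N}^k$ does satisfy $(\mathscr{N}^k)^2\subseteq\mathscr{N}\mathscr{N}^k=0$, but in a Jordan algebra the powers (and the derived series terms) of an ideal need \emph{not} be ideals of the ambient algebra --- this is precisely where the Jordan case departs from the Lie and associative cases, and it is the reason the theorem resisted proof until Penico. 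The standard repair is Penico's series: one proves that $\mathscr{N}^{(1)}:=\mathscr{N}^2+\mathscr{N}^2\mathscr{A}$ \emph{is} an ideal of $\mathscr{A}$ contained in $\mathscr{N}$ (a lemma that genuinely uses the Jordan identities of Lemma~\ref{Fundamenta}), and that for a solvable ideal the iteration $\mathscr{N}\supseteq\mathscr{N}^{(1)}\supseteq(\mathscr{N}^{(1)})^{(1)}\supseteq\cdots$ terminates at $0$; the last nonzero term is then a square-zero ideal of $\mathscr{A}$, and the induction must run on the length of this series rather than on the nilpotency index. Without this lemma your reduction to the base case $\mathscr{N}^2=0$ is unjustified.

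The second and larger gap is the one you name yourself and then set aside: the vanishing $H^2(\mathscr{S},\mathscr{M})=0$ for a semisimple Jordan algebra $\mathscr{S}$ and an arbitrary finite-dimensional Jordan bimodule $\mathscr{M}$ (the second Whitehead lemma for Jordan algebras). This is not a peripheral ingredient that can be ``granted'': it carries essentially the entire weight of the theorem, and historically it is a result of the same depth as the Wedderburn Principal Theorem itself, proved by arguments of the same kind and only later recast cohomologically. Moreover, even your intermediate claim that $f(\bar X,\bar Y)=s(\bar X)s(\bar Y)-s(\bar X\bar Y)$ ``is a $2$-cocycle'' presupposes the correct definition of Jordan cohomology, whose cocycle conditions arise from linearizations of the Jordan identity and are not the formal analogues of the Hochschild or Chevalley--Eilenberg conditions; that framework has to be set up and the cocycle identity verified, which is itself a nontrivial computation. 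As written, then, the proposal is a correct high-level reduction to two known but unproven results, not a proof. Since the paper itself treats Theorem~\ref{WPT} purely as a citation, the consistent course is to cite Albert and Penico (or Jacobson's book); a self-contained argument would require supplying both Penico's ideal lemma and the cohomological vanishing, which together constitute the real content.
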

It follows  Theorem~\ref{WPT} that $\mathscr{A}$ is semisimple if and only if the   bilinear form $\tau$ is non-degenerate. By this result  and Lemma~\ref{tau}, one can easily prove that every semisimple Jordan algebra necessarily  carries  an identity element.

Now, let $\mathscr{A}$ be a Jordan algebra with an identity element $E$, and $\mathbb{R}[Y]$ denote the polynomials in the variable $Y \in \mathscr{A}$ with coefficients in $\mathbb{R}$. Notice that  $\mathbb{R}[Y]$ is the subalgebra of $\mathscr{A}$ generated by $Y$ and the identity element,   the number $\operatorname{dim} \mathbb{R}[Y]$ is necessarily   bounded. We define the \textit{rank} of $\mathscr{A}$ as follows
\begin{align}\label{rank}
r=\max\{\operatorname{dim} \mathbb{R}[Y]: Y \in \mathscr{A}\}.
\end{align}
An element of $X \in \mathscr{A}$ is called \textit{regular}, if $\operatorname{dim} \mathbb{R}[X]$ is maximal in $\{\operatorname{dim} \mathbb{R}[Y]: Y \in \mathscr{A}\} $. The \textit{reduced trace} of a regular element $X \in \mathscr{A}$ is defined by
\begin{align}\label{tr}
\operatorname{tr}(X):=\operatorname{Tr} {L_X|_{\mathbb{R}[X]}}.
\end{align}
Since the set of regular elements is dense in $\mathscr{A},$ the function (\ref{tr}) can be uniquely extended to a linear map  $\operatorname{tr}:\mathscr{A} \rightarrow \mathbb{R},$ and in particular,  we have  $\operatorname{tr}(E)=r$ (see \cite{FK1994}).

\begin{Lemma}[\cite{FK1994}]\label{trass}
Let $\mathscr{A}$ be a Jordan algebra with an identity element. Then the symmetric bilinear form $\operatorname{tr}(X Y)$ is associative, that is,
 $\operatorname{tr}((XY)Z)=\operatorname{tr}(X(Y Z))$ for all $X, Y, Z \in \mathscr{A}$.
\end{Lemma}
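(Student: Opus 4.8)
The plan is to use the linearity of $\operatorname{tr}$ to recast the statement as the vanishing of $\operatorname{tr}$ on associators, and then to exploit that $\operatorname{tr}$ is an \emph{invariant} of the Jordan structure. Writing the associator as $[X,Y,Z]:=(XY)Z-X(YZ)$, linearity gives $\operatorname{tr}((XY)Z)-\operatorname{tr}(X(YZ))=\operatorname{tr}([X,Y,Z])$, so it suffices to prove $\operatorname{tr}([X,Y,Z])=0$ for all $X,Y,Z\in\mathscr{A}$. A one-line computation using commutativity (\ref{commu}) exhibits the associator as the image of $Y$ under a commutator of multiplication operators: $[L_Z,L_X](Y)=Z(XY)-X(ZY)=(XY)Z-X(YZ)=[X,Y,Z]$. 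Hence the whole claim reduces to showing that the linear functional $\operatorname{tr}\circ[L_Z,L_X]$ vanishes identically for every $X,Z\in\mathscr{A}$.

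The first ingredient is the classical fact that in any Jordan algebra the commutator $[L_X,L_Z]$ is a \emph{derivation}; this is the single place where the Jordan axiom (\ref{weakass}) is genuinely used, through its polarized form $[[L_X,L_Z],L_W]=L_{[L_X,L_Z]W}$, and I would record it as a preliminary. The second ingredient is that the reduced trace is invariant under automorphisms. Indeed, if $\phi\in\operatorname{Aut}(\mathscr{A})$ then $\phi(E)=E$ and $\phi$ carries $\mathbb{R}[X]$ isomorphically onto $\mathbb{R}[\phi(X)]$, so that $L_{\phi(X)}|_{\mathbb{R}[\phi(X)]}$ is conjugate to $L_X|_{\mathbb{R}[X]}$ via $\phi$; taking traces gives $\operatorname{tr}(\phi(X))=\operatorname{tr}(X)$ for regular $X$, and this extends to all of $\mathscr{A}$ by density of the regular elements and continuity. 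Equivalently, $\operatorname{tr}$ is a coefficient of the generic minimal polynomial, which every automorphism preserves.

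To finish, I would integrate the derivation. Since $\mathscr{A}$ is finite dimensional, $\exp(tD)$ is a one-parameter group of automorphisms for any derivation $D$. Invariance then yields $\operatorname{tr}(\exp(tD)Y)=\operatorname{tr}(Y)$ for all $t$ and all $Y$, and differentiating at $t=0$ (legitimate because $\operatorname{tr}$ is linear, hence smooth) gives $\operatorname{tr}(DY)=0$. Applying this with $D=[L_Z,L_X]$, which is again a derivation, and invoking the associator identity above yields $\operatorname{tr}([X,Y,Z])=0$, i.e. exactly the asserted associativity.

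I expect the main obstacle to be the Jordan-algebraic input in the first ingredient: verifying that $[L_X,L_Z]$ is a derivation is not formal and rests essentially on the weak-associativity axiom (\ref{weakass}) and its linearizations. Once that is in place, the automorphism-invariance of $\operatorname{tr}$ and the exponential trick are routine, the only technical care being the density/continuity passage from regular elements to all of $\mathscr{A}$. I note that one could instead argue through the Wedderburn decomposition (Thm.~\ref{WPT}), reducing to simple factors on which $\operatorname{tr}(XY)$ is proportional to $\tau(X,Y)=\operatorname{Tr}L_{XY}$ and invoking Lemma~\ref{tau}; but controlling the contribution of the radical makes that route considerably less clean than the uniform automorphism argument above.
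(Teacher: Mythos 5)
Your proposal is correct, but there is nothing in the paper to compare it against: the paper does not prove this lemma at all, it simply quotes it from \cite{FK1994}. Judged on its own merits, your argument is a sound reconstruction of the classical proof, and every step is valid in the paper's finite-dimensional setting. The reduction $\operatorname{tr}((XY)Z)-\operatorname{tr}(X(YZ))=\operatorname{tr}\bigl([L_Z,L_X]Y\bigr)$ via commutativity (\ref{commu}) is right; the invariance $\operatorname{tr}\circ\phi=\operatorname{tr}$ for $\phi\in\operatorname{Aut}(\mathscr{A})$ is correctly argued (automorphisms fix $E$, preserve regularity, and conjugate $L_X|_{\mathbb{R}[X]}$ into $L_{\phi(X)}|_{\mathbb{R}[\phi(X)]}$, with the passage from regular elements to all of $\mathscr{A}$ by density and linearity); and the exponential trick $\operatorname{tr}(DY)=\tfrac{d}{dt}\big|_{t=0}\operatorname{tr}(\exp(tD)Y)=0$ legitimately uses finite-dimensionality, which the paper assumes throughout. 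Two remarks. First, the one fact you defer to the literature --- that $[L_X,L_Z]$ is a derivation --- can be made self-contained from the paper's own Lemma~\ref{Fundamenta}: polarize identity (iii) by replacing $X$ with $X+W$ and subtracting the pure-$X$ and pure-$W$ instances, which gives $L_{(XW)Y}-L_{XW}L_Y=(L_{XY}-L_XL_Y)L_W+(L_{WY}-L_WL_Y)L_X$; then antisymmetrize this in $W$ and $Y$ to obtain $L_{[L_Y,L_W]X}=[[L_Y,L_W],L_X]$, which is exactly the derivation property $[D,L_X]=L_{DX}$ for $D=[L_Y,L_W]$. Second, your closing judgement is apt: the alternative route through Theorem~\ref{WPT} and Lemma~\ref{tau} works smoothly only on the semisimple part, where $\operatorname{tr}$ is (factor by factor) proportional to $\tau$; on a unital algebra with nonzero radical the two forms differ genuinely and the radical's contribution to $\operatorname{tr}$ is not controlled by Lemma~\ref{tau}, so the automorphism-invariance argument is indeed the cleaner way to get the full generality the lemma asserts.
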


For a Jordan algebra $\mathscr{A}$, the Killing form on $\mathscr{A}$ is defined as follows
\begin{align}\label{Killing}
B(X, Y):=\operatorname{Tr} L_X L_Y,
\end{align}
for all $X, Y \in \mathscr{A}$. We note that the Killing form of a Jordan algebra is symmetric, but in general not associative.

\section{Connections and curvatures on  Jordan algebras}\label{Connection}
In this section, we  first  introduce the  concepts of {connection} and curvature on Jordan algebras, then  obtain  the corresponding   formulas. Moreover, we  get a  Jordan version of Theorem~\ref{CL}.

\subsection{Jordan-Levi-Civita Connection}
\begin{Definition}\label{JLCC}
Let $\mathscr{A}$ be a  Jordan algebra and $\langle\cdot, \cdot\rangle$ be an inner product. For any bilinear map
\begin{align*}
\nabla: \mathscr{A} \times \mathscr{A} \rightarrow \mathscr{A},
\end{align*}
write $\nabla_X Y:=\nabla(X, Y)$ for any $X, Y \in \mathscr{A}$.  We call such $\nabla$ a \textit{connection} on $\mathscr{A}$.  If the connection $\nabla$ satisfies
\begin{align}\label{c}
\nabla_XY&+\nabla_YX=XY, ~~\forall X, Y\in \mathscr{A},
\end{align}
then we call $\nabla$ a \textit{torsion-free connection} on $\mathscr{A}$. If $\nabla$ is   {torsion-free} and   satisfies
\begin{align}\label{metric-sym}
\langle\nabla_X Y, Z&\rangle-\langle Y, \nabla_X Z\rangle=0, ~~\forall X, Y\in \mathscr{A},
\end{align}
then we call $\nabla$ a \textit{Jordan-Levi-Civita connection} on $(\mathscr{A},\langle\cdot, \cdot\rangle)$.
\end{Definition}

\begin{remark}
Invariant super-connections on Lie super-groups have two components: the even one and the odd one (\cite{Go}). The  common  connection on metric Lie algebras  corresponds to  the even one (see Appendix~\ref{Appendix-Lie}), and  our  Definition~\ref{JLCC} is  formally   adapted    to the  odd one  of an invariant super-connection. Indeed, suppose that $G$ be a  Lie super-group with the Lie super-algebra $\g=\g_0+\g_1$ consisting
of left-invariant vector fields, and $\langle\cdot,\cdot\rangle$ be a left-invariant super Riemannian metric on $G$. Then we may identify $(G,\langle\cdot,\cdot\rangle)$ with $(\g,\langle\cdot,\cdot\rangle)$.
Let  $\nabla$ be the Levi-Civita connection of $(\g,\langle\cdot,\cdot\rangle)$,  and $X, Y, Z \in \g$  be homogeneous elements with  $|X|\in \{0,1\}$ denoting the parity of $X.$ Then the torsion-free property and the
metric-preserving property of $\nabla$ are respectively given by
\begin{align}
\nabla_XY-(-1)^{|X||Y|}\nabla_YX-[X,Y]=0, \label{stor-f}\\
\langle \nabla_XY, Z\rangle+(-1)^{|X||Y|}\langle Y,\nabla_XZ\rangle=0.\label{smetric-skew}
\end{align}
Similar to the standard theory in Riemannian geometry, the Levi-Civita connection $\nabla$  is uniquely determined by
 (\ref{stor-f}) and (\ref{smetric-skew}), i.e.,
\begin{align}\label{skoz}
2\langle \nabla_XY,&Z\rangle=\langle [X,Y],Z\rangle)-(-1)^{|X|(|Y|+|Z|)}\langle [Y,Z],X\rangle)+(-1)^{|Z|(|X|+|Y|)}\langle [Z,X],Y\rangle.
\end{align}
Moreover,  associated with the Levi-Civita connection , the curvature  $(\g,\langle\cdot,\cdot\rangle)$ is given by
\begin{align}\label{srrc}
R(X,Y)Z=-\nabla_X\nabla_YZ+(-1)^{|X||Y|}\nabla_Y\nabla_XZ+\nabla_{[X,Y]}Z.
\end{align}
We refer to \cite{Go}  for  more details about invariant structures on Lie super-groups,.
\end{remark}

\begin{remark}
Clearly, for a metric Lie algebra $(\mathfrak{g},\langle\cdot, \cdot\rangle)$, the property (\ref{metric-skew}) is equivalent to $\nabla_X$ being skew-symmetric for all $X \in \mathfrak{g}$, while for a metric Jordan algebra $(\mathscr{A},\langle\cdot, \cdot\rangle)$, the equation (\ref{metric-sym}) is equivalent to $\nabla_X$ being symmetric for all $X \in \mathscr{A}$.
\end{remark}

The following theorem might be regarded as a special case of (\ref{skoz}) within the context of invariant metrics on Lie super-groups.

\begin{theorem}\label{unique}
Every metric Jordan algebra $(\mathscr{A},\langle\cdot, \cdot\rangle)$ admits a unique Jordan-Levi-Civita connection $\nabla$, which is given by
\begin{align*}
\langle\nabla_X Y, Z\rangle=\frac{1}{2}(\langle X Y, Z\rangle-\langle Y Z, X\rangle+\langle Z X, Y\rangle),
\end{align*}
for all $X, Y, Z \in \mathscr{A}$.
\end{theorem}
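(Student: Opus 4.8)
The plan is to prove this as the Jordan counterpart of the Koszul formula. I would first encode the two defining conditions of a Jordan-Levi-Civita connection into properties of the trilinear form $f(X,Y,Z):=\langle\nabla_X Y,Z\rangle$. The symmetry condition (\ref{metric-sym}) says exactly that $\nabla_X$ is self-adjoint, i.e.\ $f(X,Y,Z)=f(X,Z,Y)$, so $f$ is symmetric in its last two slots. The torsion-free condition (\ref{c}), paired against an arbitrary $Z$, reads $f(X,Y,Z)+f(Y,X,Z)=\langle XY,Z\rangle$. Thus a Jordan-Levi-Civita connection amounts to a trilinear form obeying these two linear constraints, and I would solve this system to get both uniqueness and the explicit formula.

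For uniqueness I would run the classical Koszul elimination, adapted to the new sign pattern. Writing the torsion relation for the three cyclic orderings of $X,Y,Z$ gives
\begin{align*}
f(X,Y,Z)+f(Y,X,Z)&=\langle XY,Z\rangle,\\
f(Y,Z,X)+f(Z,Y,X)&=\langle YZ,X\rangle,\\
f(Z,X,Y)+f(X,Z,Y)&=\langle ZX,Y\rangle.
\end{align*}
Forming the alternating combination (first) $-$ (second) $+$ (third) and repeatedly applying the last-two-slot symmetry $f(A,B,C)=f(A,C,B)$ to the six left-hand terms, the four ``cross'' terms cancel in pairs and one is left with $2f(X,Y,Z)$. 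This yields
\[
\langle\nabla_X Y,Z\rangle=\tfrac12\big(\langle XY,Z\rangle-\langle YZ,X\rangle+\langle ZX,Y\rangle\big),
\]
and since $\langle\cdot,\cdot\rangle$ is nondegenerate this determines $\nabla_X Y$ uniquely.

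For existence I would simply take the displayed formula as the \emph{definition} of $\nabla$---legitimate precisely because nondegeneracy lets me recover $\nabla_X Y$ from its pairings against all $Z$---and then verify the two axioms by direct substitution. Bilinearity is immediate from the shape of the formula. Checking (\ref{c}) and (\ref{metric-sym}) each reduces, after expanding, to cancellations that use only the commutativity $XY=YX$ of the Jordan product to match terms such as $\langle YZ,X\rangle$ with $\langle ZY,X\rangle$; for instance, summing the formula for $\langle\nabla_X Y,Z\rangle$ and $\langle\nabla_Y X,Z\rangle$ and applying commutativity collapses the six terms to $\langle XY,Z\rangle$.

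The computation has no genuinely hard step; the only thing to watch is the sign bookkeeping. The point worth flagging is structural: in contrast with the Lie case, condition (\ref{metric-sym}) makes $f$ symmetric (rather than skew) in its last two arguments, which flips the relevant signs relative to the usual Koszul identity, and every cancellation throughout rests solely on commutativity of the product---the Jordan identity (\ref{weakass}) is never invoked. I would therefore note in passing that the statement in fact holds for any commutative metric algebra.
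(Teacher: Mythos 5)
Your proposal is correct and follows essentially the same route as the paper: a Koszul-type elimination (you cycle the torsion-free identity and cancel via the last-two-slot symmetry of $\langle\nabla_X Y,Z\rangle$, whereas the paper cycles the metric-symmetry identity and then inserts torsion-freeness, an algebraically equivalent manipulation), followed by taking the displayed formula as the definition and verifying both axioms. Your closing observation that only commutativity of the product is used, so the result holds for any commutative metric algebra, is accurate and consistent with the paper's proof, which likewise never invokes the Jordan identity.
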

\begin{proof}
 Assume that $\nabla$ a Jordan-Levi-Civita connection on $(\mathscr{A},\langle\cdot, \cdot\rangle)$, then
 \begin{align}
\langle\nabla_X Y, Z\rangle-\langle Y, \nabla_X Z\rangle=0, \label{1} \\
\langle\nabla_Y Z, X\rangle-\langle Z, \nabla_Y X\rangle=0, \label{2} \\
\langle\nabla_Z X, Y\rangle-\langle X, \nabla_Z Y\rangle=0, \label{3}
 \end{align}
for any $X, Y, Z \in \mathscr{A}$. Adding (\ref{1}) and (\ref{2}) and subtracting (\ref{3}), we have
\begin{align*}
\langle\nabla_X Y-\nabla_Y X, Z\rangle-\langle\nabla_Z X+\nabla_X Z, Y\rangle+\langle\nabla_Y Z+\nabla_Z Y, X\rangle=0.
\end{align*}
Using the property (\ref{c}), we have
\begin{align}\label{Koz}
\langle\nabla_X Y, Z\rangle=\frac{1}{2}(\langle X Y, Z\rangle-\langle Y Z, X\rangle+\langle Z X, Y\rangle).
\end{align}
So the Jordan-Levi-Civita connection $\nabla$ is uniquely determined by (\ref{c}) and (\ref{metric-sym}).

To prove the existence, we define $\nabla$ by (\ref{Koz}). It is easy to verify that $\nabla$ is well-defined and that it satisfies the desired conditions (\ref{c}) and (\ref{metric-sym}).
\end{proof}

\subsection{The Jordan Curvature Tensor}
\begin{Definition}\label{JCT}
Let $(\mathscr{A},\nabla)$ be a  Jordan algebra with  a  connection. The  \textit{curvature tensor}  of $(\mathscr{A},\nabla)$ is defined by
\begin{align*}
R(X, Y) Z=-\nabla_X \nabla_Y Z-\nabla_Y \nabla_X Z+\nabla_{X Y} Z,~~X, Y, Z \in \mathscr{A},
\end{align*}
or equivalently, $R(X, Y)=\nabla_{X Y}-(\nabla_X \nabla_Y+\nabla_Y \nabla_X)$.
If moreover, $\nabla$ comes from   the Jordan-Levi-Civita connection of a  metric Jordan algebra $(\mathscr{A},\langle\cdot, \cdot\rangle)$, then we call $R$ the \textit{Jordan curvature tensor}.
\end{Definition}

The  Definition~\ref{JCT} is  formally   adapted    to  the curvature restricted to the  odd component  of  super-connections (\ref{srrc}).

\begin{remark}
 It is easily seen that $R$ is symmetric in the first two positions. Note that if  $R=0$, also referred to as \textit{flat},  then $\nabla$ naturally induces a Jordan algebra representation of $\mathscr{A}$.
\end{remark}

\begin{remark}\label{2dim}
Consider the two-dimensional Jordan algebra $$\mathscr{A}: e_1 e_1=e_1, e_1 e_2=e_2.$$ Endow $\mathscr{A}$ with the metric $\langle\cdot,\cdot\rangle$ so that $\{e_1, e_2\}$ is an orthonormal basis. Then by a straightforward calculation, we know that the Jordan-Levi-Civita connection satisfies $\nabla_{e_1} e_1=e_1, \nabla_{e_1} e_2=e_2$ and $\nabla_{e_2} e_1=0=\nabla_{e_2} e_2$. It follows that the only non-trivial term $R(e_i, e_j)e_k$ is $R(e_1,e_1)e_2=-e_2$. So $R(e_1, e_1)e_2+R(e_1, e_2)e_1
+R(e_2, e_1)e_1=-e_2$. This shows that the sum $R(X, Y)Z+$ $R(Y, Z)X+R(Z, X)Y$ in general does not vanish.
\end{remark}

Now, we explore Theorem~\ref{CL} in the context of  Jordan algebra. Suppose that $(\mathscr{A},\nabla)$ is a Jordan algebra with a flat, torsion-free connection, that is,
\begin{align*}
\nabla_X \nabla_Y Z+\nabla_Y \nabla_X Z-\nabla_{X Y} Z=0,\quad \nabla_XY+\nabla_YX=XY
\end{align*}
for all $X, Y, Z \in \mathscr{A}$. Define on $\mathscr{A}$ a new binary operation
\begin{align*}
X\diamond Y:=\nabla_XY,~~\forall X, Y\in \mathscr{A},
\end{align*}
then it is easy to verify that $\diamond$ satisfies
\begin{align*}
(X\diamond Y)\diamond Z-X\diamond(Y\diamond Z)=-(Y\diamond X)\diamond Z+Y\diamond(X\diamond Z),~~\forall X,Y,Z\in \mathscr{A}.
\end{align*}
Note that this differs from the equation (\ref{+}) by a sign. In an analogy of left symmetric algebra (\ref{+}), we introduce the following definition
\begin{Definition}\label{lssa}
Let $(V,\diamond)$ is an \textit{arbitrary} binary algebra. If $(V,\diamond)$  satisfies
\begin{align}\label{ZFD}
(X\diamond Y)\diamond Z-X\diamond(Y\diamond Z)=-(Y\diamond X)\diamond Z+Y\diamond(X\diamond Z),~~\forall X,Y,Z\in V,
\end{align}
then we call $(V,\diamond)$ is a \textit{left skew-symmetric algebra} (write LSSA for short).
\end{Definition}

Clearly, an algebra is   both  a LSA (i.e., satisfying the equation  (\ref{+})) and  a LSSA if and only if it is an associative algebra.

\begin{theorem}\label{ZFDCDS}
Let $(V,\diamond)$ is a left skew-symmetric algebra.  Then the new binary operation
\begin{align}\label{sub}
X\bullet Y:=X\diamond Y+Y\diamond X,~~\forall X, Y\in V
\end{align}
defines on $V$ a Jordan structure.
\end{theorem}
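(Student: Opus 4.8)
The plan is to realise $(V,\bullet)$ as a subalgebra of a special Jordan algebra. Commutativity of $\bullet$ is immediate from \eqref{sub}, so the entire content is the Jordan identity \eqref{weakass}, and I would obtain it by transporting it from the associative model $\mathrm{End}(V)$.

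First I would put \eqref{ZFD} into operator form. Writing $L_X$ for left $\diamond$-multiplication, $L_X(Y)=X\diamond Y$, and reading \eqref{ZFD} as an identity in the last variable $Z$ gives $L_{X\diamond Y}-L_XL_Y=-L_{Y\diamond X}+L_YL_X$, whence, after adding the two $L$-products and using \eqref{sub},
\begin{equation*}
L_{X\bullet Y}=L_XL_Y+L_YL_X,\qquad \forall\, X,Y\in V.\tag{$\dagger$}
\end{equation*}
Thus the linear map $X\mapsto L_X$ is an algebra homomorphism from $(V,\bullet)$ into $\mathrm{End}(V)$ equipped with the anticommutator product $\{S,T\}:=ST+TS$. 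Since this anticommutator product is twice the product of the special Jordan algebra $\mathrm{End}(V)^+$, and rescaling the product of a Jordan algebra by a nonzero scalar preserves the Jordan identity (which is homogeneous of degree three in the product), $(\mathrm{End}(V),\{\cdot,\cdot\})$ is a Jordan algebra. Because the Jordan identity is a polynomial identity, it is reflected by any injective homomorphism; hence it would suffice to make $X\mapsto L_X$ injective.

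The main obstacle is precisely that this map need not be injective: its kernel $\{X\in V:X\diamond Y=0\ \text{for all }Y\}$ may be nonzero. I would remove it by unitalisation. Set $\hat V:=V\oplus\mathbb{R}e$ and extend $\diamond$ by declaring $e$ a two-sided $\diamond$-unit. The associator $(X,Y,Z):=(X\diamond Y)\diamond Z-X\diamond(Y\diamond Z)$ is trilinear and vanishes whenever one of its three arguments equals $e$ (a direct check using that $e$ is a two-sided unit), so the defining skew-symmetry $(X,Y,Z)=-(Y,X,Z)$ extends from $V$ to all of $\hat V$; thus $(\hat V,\diamond)$ is again an LSSA. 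Now the $\diamond$-left-multiplication on $\hat V$ satisfies $L_{\hat X}(e)=\hat X\diamond e=\hat X$, so it is injective.

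Finally I would assemble the pieces. Writing $\hat X\hat\bullet\hat Y:=\hat X\diamond\hat Y+\hat Y\diamond\hat X$ for the symmetrised product on $\hat V$, identity $(\dagger)$ holds verbatim there, so $X\mapsto L_X$ is an injective homomorphism from $(\hat V,\hat\bullet)$ into the Jordan algebra $(\mathrm{End}(\hat V),\{\cdot,\cdot\})$; reflecting the Jordan identity through this injection shows $(\hat V,\hat\bullet)$ is Jordan. Since $V$ is closed under $\diamond$ it is closed under $\hat\bullet$, and $\hat\bullet$ restricts to $\bullet$ on $V$, so $(V,\bullet)$ is a subalgebra of the Jordan algebra $(\hat V,\hat\bullet)$ and is therefore itself a Jordan algebra, as claimed. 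The one delicate point in this route is the unitalisation step, where one must verify that adjoining a two-sided unit preserves the left-skew-symmetric identity.
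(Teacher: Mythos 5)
Your proof is correct, and it takes a genuinely different route from the paper's. The paper verifies the Jordan identity by direct computation: it expands $X\bullet((X\bullet X)\bullet Y)-(X\bullet X)\bullet(X\bullet Y)$ into eight $\diamond$-monomials $a,\dots,h$ and cancels them using the defining identity (\ref{ZFD}) together with its specialization $(X\diamond X)\diamond Z=X\diamond(X\diamond Z)$. You instead linearize (\ref{ZFD}) in the last variable to get the operator identity $L_{X\bullet Y}=L_XL_Y+L_YL_X$, so that $X\mapsto L_X$ is a homomorphism of $(V,\bullet)$ into $(\mathrm{End}(V),\{\cdot,\cdot\})$ with the anticommutator product, and you repair the possible failure of injectivity by unitalising. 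The delicate point you flag does hold, by exactly the argument you sketch: both the associator and its transpose in the first two slots vanish whenever one argument is $e$, so the trilinear identity $(A,B,C)+(B,A,C)=0$ need only be checked with all arguments in $V$, where it is the hypothesis; then $L_{\hat X}(e)=\hat X$ makes $\hat X\mapsto L_{\hat X}$ injective on $\hat V$, and pulling the Jordan identity back through this injection and restricting to the subalgebra $V$ finishes the job. Comparing the two approaches: the paper's computation is elementary and entirely self-contained, using nothing beyond bilinearity of $\diamond$; your argument leans on the standard fact that the (anti)commutator product on an associative algebra satisfies the Jordan identity (a fact the paper itself records in its preliminaries, and which is preserved under the harmless rescaling by $2$ since the Jordan identity is homogeneous of degree three in the product), but in exchange it is conceptual and proves strictly more: it exhibits $(\hat V,\hat\bullet)$, hence $(V,\bullet)$, as a subalgebra of $\mathrm{End}(\hat V)$ under the anticommutator, i.e., it shows that the Jordan algebras arising from left skew-symmetric algebras are \emph{special}, a structural conclusion the paper's computation does not reveal.
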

\begin{proof}
Note that for any $X, Y\in V$
\begin{align*}
\textbf{J}&=\frac{1}{2}X\bullet ((X\bullet X)\bullet Y)-\frac{1}{2}(X\bullet X)\bullet (X\bullet Y)\\
&=X\bullet ((X\diamond X)\bullet Y)-(X\diamond X)\bullet (X\bullet Y)\\
&=X\bullet ((X\diamond X)\diamond Y+Y\diamond (X\diamond X))-(X\diamond X)\bullet (X\diamond Y+Y\diamond X)\\
&=X\diamond ((X\diamond X)\diamond Y+Y\diamond (X\diamond X))+((X\diamond X)\diamond Y+Y\diamond (X\diamond X))\diamond X\\
&\quad-(X\diamond X)\diamond (X\diamond Y+Y\diamond X)-(X\diamond Y+Y\diamond X)\diamond(X\diamond X)
\end{align*}
Put
\begin{align*}
a&=X\diamond ((X\diamond X)\diamond Y),\quad b=X\diamond(Y\diamond (X\diamond X)),\quad c= ((X\diamond X)\diamond Y)\diamond X,\quad d=(Y\diamond (X\diamond X))\diamond X,\\
e&=(X\diamond X)\diamond(X\diamond Y),\quad f=(X\diamond X)\diamond(Y\diamond X),\quad g=(X\diamond Y)\diamond(X\diamond X),\quad h=(Y\diamond X)\diamond(X\diamond X),
\end{align*}
then $\textbf{J}=a+b+c+d-e-f-g-h.$ Using (\ref{ZFD}), we have
\begin{align*}
b-g&=-Y\diamond(X\diamond (X\diamond X))+(Y\diamond X)\diamond(X\diamond X)=-Y\diamond(X\diamond (X\diamond X))+h,\\
c-f&=-(Y\diamond (X\diamond X))\diamond X+Y\diamond((X\diamond X)\diamond X)=Y\diamond((X\diamond X)\diamond X)-d.
\end{align*}
Since $((X\diamond X)\diamond Z)-X\diamond (X\diamond Z)=-((X\diamond X)\diamond Z)+X\diamond (X\diamond Z)=0$, then $(X\diamond X)\diamond Z=X\diamond (X\diamond Z)$  for any $X,Z\in V$. In particular, $X\diamond (X\diamond X)=(X\diamond X)\diamond X$, and
\begin{align*}
a-e&=X\diamond ((X\diamond X)\diamond Y)-(X\diamond X)\diamond(X\diamond Y)=X\diamond (X\diamond (X\diamond Y))-(X\diamond X)\diamond(X\diamond Y)\\
&=(X\diamond X)\diamond(X\diamond Y)-(X\diamond X)\diamond(X\diamond Y)=0.
\end{align*}
Consequently, $\textbf{J}=a+b+c+d-e-f-g-h=0,$ that is, (\ref{sub}) defines a Jordan structure.
\end{proof}

\begin{Corollary}\label{analogy}
For a Jordan algebra $\mathscr{A},$  there is a one-to-one correspondence between    flat torsion-free  connections  on $\mathscr{A}$ and  LSSA-structures on $\mathscr{A}$.
\end{Corollary}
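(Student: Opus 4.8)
The plan is to establish the claimed bijection by exhibiting two mutually inverse constructions and verifying that each lands in the correct class of objects. In one direction, given a flat torsion-free connection $\nabla$ on $\mathscr{A}$, I set $X\diamond Y:=\nabla_XY$ and claim this defines an LSSA-structure. In the other direction, given an LSSA-structure $\diamond$ on the underlying vector space, I set $\nabla_XY:=X\diamond Y$ and claim this yields a flat torsion-free connection whose induced Jordan product (via Theorem~\ref{ZFDCDS}) agrees with the original multiplication on $\mathscr{A}$. The compatibility requirement linking the two sides is precisely that the symmetrization $X\diamond Y+Y\diamond X$ must reproduce the given Jordan multiplication $XY$ of $\mathscr{A}$, which is exactly the torsion-free condition (\ref{c}).

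First I would verify the forward direction, which is essentially already carried out in the discussion preceding Definition~\ref{lssa}: a direct expansion shows that flatness ($\nabla_X\nabla_YZ+\nabla_Y\nabla_XZ-\nabla_{XY}Z=0$) together with torsion-freeness forces $\diamond$ to satisfy the defining identity (\ref{ZFD}) of an LSSA. The torsion-free condition $\nabla_XY+\nabla_YX=XY$ translates directly into $X\diamond Y+Y\diamond X=XY$, so the symmetrization recovers the Jordan product of $\mathscr{A}$. Thus $\diamond$ is a genuine LSSA-structure \emph{compatible} with $\mathscr{A}$, meaning its associated Jordan structure from Theorem~\ref{ZFDCDS} coincides with the original one.

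Next I would check the reverse direction. Starting from an LSSA-structure $\diamond$ on $\mathscr{A}$ whose symmetrization equals the Jordan product, define $\nabla_XY:=X\diamond Y$. Torsion-freeness is immediate from $X\diamond Y+Y\diamond X=XY$. For flatness, one rewrites the flatness identity $\nabla_X\nabla_YZ+\nabla_Y\nabla_XZ-\nabla_{XY}Z$ in terms of $\diamond$; using $XY=X\diamond Y+Y\diamond X$ this becomes $(X\diamond Y)\diamond Z+(Y\diamond X)\diamond Z-X\diamond(Y\diamond Z)-Y\diamond(X\diamond Z)$, which is exactly the LSSA identity (\ref{ZFD}) rearranged, hence vanishes. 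Finally I would confirm that the two maps $\nabla\mapsto\diamond$ and $\diamond\mapsto\nabla$ are mutually inverse, which is clear since both are literally the identity $\nabla_XY=X\diamond Y$ at the level of the underlying bilinear map.

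The only genuinely subtle point — and the one I would flag as the main obstacle — is bookkeeping the compatibility condition correctly. The correspondence is not between abstract LSSA-structures on a bare vector space and connections, but rather between those LSSA-structures whose induced Jordan product matches the fixed multiplication of $\mathscr{A}$ and the flat torsion-free connections on $\mathscr{A}$; the torsion-free equation (\ref{c}) is what pins this down. One must be careful that the phrase ``LSSA-structure on $\mathscr{A}$'' in the corollary is understood to mean a compatible one, otherwise the map is not well-defined. Once this is made precise, the verifications are routine algebraic manipulations mirroring the proof of Theorem~\ref{ZFDCDS}, and no deeper difficulty arises.
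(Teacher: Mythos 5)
Your proposal is correct and follows essentially the same route as the paper: the paper's (implicit) proof is exactly the identification $X\diamond Y=\nabla_XY$, with the forward direction carried out in the discussion preceding Definition~\ref{lssa} and the reverse direction being the same rearrangement of (\ref{ZFD}) using the torsion-free condition (\ref{c}). Your explicit remark that ``LSSA-structures on $\mathscr{A}$'' must be read as those whose symmetrization reproduces the given Jordan product (mirroring how Theorem~\ref{CL} is read for Lie algebras) is a correct and worthwhile clarification of what the paper leaves tacit.
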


In the sequel, we always assume that $\nabla$, $R$  are  the Jordan-Levi-Civita connection and
the  Jordan curvature tensor of  the metric Jordan algerba $(\mathscr{A},\langle\cdot, \cdot\rangle)$, respectively.  We  also write $R(X,Y,Z,W)=\langle R(X,Y)Z,W\rangle$ for any  $X, Y, Z, W\in \mathscr{A}$

\begin{Lemma}
Let $(\mathscr{A},\langle\cdot, \cdot\rangle)$ be a metric Jordan algebra. Then
\begin{enumerate}
\item [(a)] $R(X,Y,Z,W)=R(Y,X,Z,W);$
\item [(b)] $R(X,Y,Z,W)=R(X,Y,W,Z).$
\end{enumerate}
\end{Lemma}
\begin{proof}
Obviously, (a) holds. For (b), it follows from that $\nabla_{X}$ is symmetric for all $X\in\mathscr{A}.$
\end{proof}

We note that in general $R(X, Y, Z, W) \neq R(Z, W, X, Y)$, see Remark~\ref{2dim}.
\begin{Definition}\label{JC}
The Jordan curvature of $(\mathscr{A},\langle\cdot,\cdot\rangle)$ is defined as follows
\begin{align*}
\mathcal{J}(X,Y)=\frac{\langle R(X, Y) X, Y\rangle}{\langle X, X\rangle\langle Y, Y\rangle-\langle X, Y\rangle^2}
\end{align*}
for any linearly independent $X, Y \in \mathscr{A}$. If moreover, $\mathcal{J}$ is a constant, $(\mathscr{A},\langle\cdot, \cdot\rangle)$ is called of constant Jordan curvature.
\end{Definition}
\begin{exam}
Consider the Jordan algebras
\begin{align*}
\mathscr{A}:~&e_1e_1=e_1, e_2e_2=e_2, \cdots, e_ne_n=e_n; \\
\mathscr{B}:~&e_1e_1=e_1, e_1e_2=\frac{1}{2}e_2, \cdots, e_1e_n=\frac{1}{2}e_n.
\end{align*}
It is easy to verify that $\mathscr{A}$ and  $\mathscr{B}$  both admit a metric of constant zero Jordan curvature.
\end{exam}

\subsection{The Jordan Ricci Curvature tensor}
For a metric Jordan algebra $(\mathscr{A},\langle\cdot, \cdot\rangle)$, one may define the Jordan Ricci curvature tensor as follows
\begin{align}\label{ric}
\operatorname{ric}(U, V)=\operatorname{Tr}{(X \mapsto R(U, X) V)},~~\forall U, V \in \mathscr{A}.
\end{align}
We point out that the tensor $\operatorname{ric}$ is in general not symmetric. It is thus natural to introduce the following (symmetric) Jordan Ricci curvature tensor

\begin{Definition}\label{JRT}
The Jordan Ricci curvature tensor of $(\mathscr{A},\langle\cdot, \cdot\rangle)$ is defined as follows
\begin{align}\label{SJTR}
\operatorname{Ric}(U, V)=\frac{1}{2}(\operatorname{ric}(U, V)+\operatorname{ric}(V, U)), \forall U, V \in \mathscr{A}.
\end{align}
If moreover, $\operatorname{Ric}$ is a constant multiple of $\langle \cdot,\cdot\rangle$, then we call $\langle\cdot,\cdot\rangle$ a Jordan-Einstein metric on $(\mathscr{A},\langle\cdot, \cdot\rangle)$.
\end{Definition}
As we shall see later, \textnormal{ric} and \textnormal{Ric} are closely related to each other by the following so called \textit{mean curvature vector}.

\begin{Definition}\label{MCV}
For a metric Jordan algebra $(\mathscr{A},\langle\cdot, \cdot\rangle)$, we define $H \in \mathscr{A}$ by
\begin{align*}
H=\sum_{i=1}^{\dim \mathscr{A}} E_i^2,
\end{align*}
where $\{E_i\}$ is an orthonormal basis of $(\mathscr{A},\langle\cdot, \cdot\rangle)$,  and we  call $H$   the \textit{mean curvature vector} of $(\mathscr{A},\langle\cdot, \cdot\rangle)$.
\end{Definition}
It is easily seen that the mean curvature vector is  independent of the choice of  orthonormal bases.

Following \cite{Dotti}, for any $U \in \mathscr{A}$, we define two linear maps $\nabla_U$ and $\nabla U$ on $(\mathscr{A},\langle\cdot, \cdot\rangle)$, respectively,  as follows
 \begin{align*}
\nabla_U(X):=\nabla_UX,\quad \nabla U(X):=\nabla_XU,
 \end{align*}
for any $X\in\mathscr{A}$.

\begin{Lemma}\label{JRTrF}
The Jordan Ricci tensor of $(\mathscr{A},\langle\cdot, \cdot\rangle)$ satisfies
\begin{align*}
\textnormal{Ric}(U,U)=\operatorname{Tr}{(\nabla U)^2}-\frac{1}{2}\operatorname{Tr}{\nabla U^2},
\end{align*}
for any $U\in\mathscr{A}$.
\end{Lemma}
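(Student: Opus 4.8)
The plan is to reduce the whole statement to a few operator-trace identities coming from the torsion-free property and the curvature formula, after which the computation collapses thanks to two observations: a cancellation by the cyclicity of the trace, and the identity $\nabla_U U=\tfrac12 U^2$.

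First I would note that since $\operatorname{Ric}(U,U)=\tfrac12(\operatorname{ric}(U,U)+\operatorname{ric}(U,U))=\operatorname{ric}(U,U)$, it suffices to compute $\operatorname{ric}(U,U)=\operatorname{Tr}(X\mapsto R(U,X)U)$. Expanding the curvature via Definition~\ref{JCT}, namely $R(U,X)U=-\nabla_U\nabla_X U-\nabla_X\nabla_U U+\nabla_{UX}U$, and using linearity of the trace, I would split $\operatorname{ric}(U,U)$ into the traces of the three maps $X\mapsto-\nabla_U\nabla_X U$, $X\mapsto-\nabla_X\nabla_U U$, and $X\mapsto\nabla_{UX}U$.

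Next, introduce the endomorphisms $A:=\nabla U$ (so $A(X)=\nabla_X U$) and $B:=\nabla_U$ (so $B(X)=\nabla_U X$). The torsion-free property (\ref{c}) reads $A(X)+B(X)=UX$, i.e. $A+B=L_U$ as operators on $\mathscr{A}$; specializing to $X=U$ gives $2\nabla_U U=U^2$, hence $\nabla_U U=\tfrac12 U^2$. With these identities the first map is $-B\circ A$, of trace $-\operatorname{Tr}(BA)$; the third is $X\mapsto A(UX)=(A\circ L_U)(X)=(A(A+B))(X)$, of trace $\operatorname{Tr}(A^2)+\operatorname{Tr}(AB)$. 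Since $\operatorname{Tr}(AB)=\operatorname{Tr}(BA)$ by cyclicity, the two cross terms cancel and the first plus third contributions sum exactly to $\operatorname{Tr}(A^2)=\operatorname{Tr}((\nabla U)^2)$.

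Finally, in the second map I would substitute $\nabla_U U=\tfrac12 U^2$, so that it becomes $X\mapsto-\tfrac12\nabla_X(U^2)=-\tfrac12(\nabla U^2)(X)$ by linearity of the assignment $V\mapsto\nabla V$; its trace is therefore $-\tfrac12\operatorname{Tr}(\nabla U^2)$. Adding the three contributions yields $\operatorname{Ric}(U,U)=\operatorname{Tr}((\nabla U)^2)-\tfrac12\operatorname{Tr}(\nabla U^2)$, as claimed. I do not anticipate a genuine obstacle: the argument is entirely formal once the torsion-free relations $A+B=L_U$ and $\nabla_U U=\tfrac12 U^2$ are in hand. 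The only step demanding care is the bookkeeping of the order of composition in each term and the correct invocation of cyclicity, so that $\operatorname{Tr}(AB)$ and $\operatorname{Tr}(BA)$ cancel rather than reinforce.
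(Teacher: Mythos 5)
Your proof is correct and takes essentially the same route as the paper's: both expand the curvature tensor, use the torsion-free identity to get $\nabla U + \nabla_U = L_U$ and $\nabla_U U = \tfrac{1}{2}U^2$, and cancel the two cross terms via cyclicity of the trace. The only difference is cosmetic — you work basis-free with operator compositions $A=\nabla U$, $B=\nabla_U$, while the paper carries out the identical cancellation summing against an orthonormal basis.
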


\begin{proof}
Assume that $\{E_i\}$ is an orthonormal basis of  $(\mathscr{A},\langle\cdot, \cdot\rangle)$, then we have
\begin{align*}
\textnormal{Ric}(U,U)=\operatorname{Tr}{(X \mapsto R(U, X)U)}=\sum_i \langle R(U, E_i)U, E_i\rangle=\sum_i \langle R(E_i,U)U, E_i\rangle.
\end{align*}
It follows that
\begin{align*}
\operatorname{Ric}(U, U) & =\sum_i\langle-\nabla_{E_i} \nabla_UU-\nabla_U \nabla_{E_i} U+\nabla_{{E_i}U}U, E_i\rangle \\
& =\sum_i\langle-\nabla_U \nabla_{E_i} U+\nabla_{{E_i}U}U, E_i\rangle-\sum_i\langle\nabla_{E_i} \nabla_UU, E_i\rangle \\
& =: \textnormal{I}-\textnormal{II}.
\end{align*}
Note that $E_i U=\nabla_{E_i} U+\nabla_U E_i$, then
\begin{align*}
\mathrm{I} & =\sum_i\langle-\nabla_U \nabla_{E_i} U, E_i\rangle+\sum_i\langle\nabla_{\nabla_U E_i} U, E_i\rangle+\sum_i\langle\nabla_{\nabla_{E_i} U} U, E_i\rangle \\
& =-\operatorname{Tr} \nabla_U \circ \nabla U+\operatorname{Tr} \nabla U \circ \nabla_U+\operatorname{Tr} \nabla U \circ \nabla U \\
& =\operatorname{Tr}{(\nabla U)^2}.
\end{align*}
Since $\nabla_U U+\nabla_U U=U^2$, then $$\textnormal{II}=\sum_i(\nabla_{E_i} \nabla_U U, E_i\rangle=\frac{1}{2} \sum_i(\nabla_{E_i} \nabla U^2, E_i\rangle=\frac{1}{2} \operatorname{Tr} \nabla U^2.$$
This completes the proof.
\end{proof}

\begin{remark}\label{ricTrf}
 It follows from a similar calculation that
\begin{align*}
\operatorname{ric}(U, V)=\operatorname{Tr} \nabla U\circ\nabla V-\operatorname{Tr} \nabla \nabla_UV,
\end{align*}
for any $U, V \in \mathscr{A}$.
\end{remark}

\begin{theorem}\label{RF}
Let $\left\{E_i\right\}$ be an orthonormal basis of $(\mathscr{A},\langle\cdot,\cdot\rangle).$ Then
\begin{align*}
\operatorname{Ric}(X, Y)=-&\frac{1}{2} \sum_{i, j}\langle XE_i, E_j\rangle\langle YE_i, E_j\rangle+\frac{1}{4}\sum_{i, j}\langle E_iE_j, X\rangle\langle E_iE_j, Y\rangle \\
& +\frac{1}{2}B(X, Y)-\frac{1}{4}\langle H, XY\rangle,~~\forall X, Y \in \mathscr{A},
\end{align*}
where $B$ is the Killing form of $\mathscr{A}, H$ is the mean curvature vector of $(\mathscr{A},\langle\cdot,\cdot\rangle)$.
\end{theorem}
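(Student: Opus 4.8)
**

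The plan is to compute $\operatorname{Ric}(X,Y)$ by starting from the trace formula established in Lemma~\ref{JRTrF} and Remark~\ref{ricTrf}, then expressing each trace in terms of the inner product and the Jordan multiplication via the Koszul-type formula (\ref{Koz}). Since both $\operatorname{Ric}$ and the claimed right-hand side are symmetric bilinear forms, it suffices by polarization to verify the identity on the diagonal, i.e.\ to prove
\begin{align*}
\operatorname{Ric}(X,X)=-\tfrac12\sum_{i,j}\langle XE_i,E_j\rangle^2+\tfrac14\sum_{i,j}\langle E_iE_j,X\rangle^2+\tfrac12 B(X,X)-\tfrac14\langle H,X^2\rangle,
\end{align*}
and then recover the full bilinear form by replacing $X^2$ with $XY$ and the squares with the corresponding products. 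By Lemma~\ref{JRTrF} the left-hand side equals $\operatorname{Tr}(\nabla X)^2-\tfrac12\operatorname{Tr}\nabla X^2$, so the whole computation reduces to evaluating these two traces.

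First I would unwind $\operatorname{Tr}(\nabla X)^2$. Recall $\nabla X(Y)=\nabla_Y X$, so $\operatorname{Tr}(\nabla X)^2=\sum_i\langle\nabla_{\nabla_{E_i}X}X,E_i\rangle=\sum_{i,j}\langle\nabla_{E_i}X,E_j\rangle\langle\nabla_{E_j}X,E_i\rangle$. Using (\ref{Koz}), I would write $\langle\nabla_{E_i}X,E_j\rangle=\tfrac12(\langle E_iX,E_j\rangle-\langle XE_j,E_i\rangle+\langle E_jE_i,X\rangle)$ and expand the product of two such expressions into nine terms summed over $i,j$. Here commutativity $XY=YX$ collapses several of the nine contributions and symmetry in the summation indices $i\leftrightarrow j$ identifies others; the combination $\sum_{i,j}\langle E_iX,E_j\rangle\langle E_jX,E_i\rangle=\sum_{i,j}\langle E_iX,E_j\rangle^2$ will produce the $-\tfrac12\sum\langle XE_i,E_j\rangle^2$ term, while the cross-terms involving $\langle L_{E_i}L_{E_j}\cdots\rangle$ should assemble into the Killing form $B(X,X)=\sum_i\langle L_X L_X E_i,E_i\rangle=\sum_{i,j}\langle XE_j,E_i\rangle\langle XE_i,E_j\rangle$ and the structure-constant term $\tfrac14\sum\langle E_iE_j,X\rangle^2$.

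Next I would handle the second trace. Since $\nabla X^2(Y)=\nabla_Y X^2$, we have $\operatorname{Tr}\nabla X^2=\sum_i\langle\nabla_{E_i}X^2,E_i\rangle$, and (\ref{Koz}) with $Y=X^2,\,Z=E_i$ gives $\langle\nabla_{E_i}X^2,E_i\rangle=\tfrac12(\langle E_iX^2,E_i\rangle-\langle X^2E_i,E_i\rangle+\langle E_iE_i,X^2\rangle)$. The first two terms cancel by commutativity, leaving $\tfrac12\sum_i\langle E_i^2,X^2\rangle=\tfrac12\langle H,X^2\rangle$ by the definition of the mean curvature vector $H$. Hence $-\tfrac12\operatorname{Tr}\nabla X^2=-\tfrac14\langle H,X^2\rangle$, which is exactly the last term of the target formula; this step is short and clean.

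The main obstacle is the bookkeeping in the first trace: after substituting (\ref{Koz}) twice I must carefully track which of the nine double-sum terms survive, use commutativity and the $i\leftrightarrow j$ relabelling to merge pairs, and recognize the surviving combinations as $-\tfrac12\sum\langle XE_i,E_j\rangle^2$, $+\tfrac14\sum\langle E_iE_j,X\rangle^2$, and $+\tfrac12 B(X,X)$. The delicate point is the coefficient accounting, since the factor $\tfrac12$ in (\ref{Koz}) contributes a global $\tfrac14$ to the nine-term expansion, so I must confirm that terms appearing with multiplicity two end up with the stated coefficients rather than half of them. Once the diagonal identity is verified with correct constants, polarizing back to $\operatorname{Ric}(X,Y)$ is automatic and completes the proof.
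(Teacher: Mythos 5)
Your skeleton is exactly the paper's proof: both reduce to the diagonal (legitimate, since $\operatorname{Ric}$ and the claimed right-hand side are symmetric bilinear forms, so polarization applies), both invoke Lemma~\ref{JRTrF}, both expand $\operatorname{Tr}{(\nabla X)^2}=\sum_{i,j}\langle\nabla_{E_i}X,E_j\rangle\langle\nabla_{E_j}X,E_i\rangle$ via the Koszul-type formula (\ref{Koz}), and your evaluation $\operatorname{Tr}{\nabla X^2}=\frac12\langle H,X^2\rangle$ is correct and is precisely how the paper obtains the $-\frac14\langle H,XY\rangle$ term.

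However, the one step you yourself flag as delicate is wrong as written. The identity $\sum_{i,j}\langle E_iX,E_j\rangle\langle E_jX,E_i\rangle=\sum_{i,j}\langle E_iX,E_j\rangle^2$ is false in general: by commutativity the left-hand side equals $\sum_{i,j}\langle XE_i,E_j\rangle\langle XE_j,E_i\rangle=\operatorname{Tr}L_X^2=B(X,X)$, whereas the right-hand side equals $\operatorname{Tr}(L_X^tL_X)=\sum_i\|XE_i\|^2$; these coincide only when $L_X$ is self-adjoint (as happens for an associative metric), which is not assumed in this theorem. Your sketch is also internally inconsistent, since two lines later you correctly identify that very same sum with the Killing form $B(X,X)$. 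The correct accounting is the reverse of your attribution. Setting $a=\langle XE_i,E_j\rangle$, $a'=\langle XE_j,E_i\rangle$, $c=\langle E_iE_j,X\rangle$, formula (\ref{Koz}) gives
\begin{align*}
\langle\nabla_{E_i}X,E_j\rangle\langle\nabla_{E_j}X,E_i\rangle=\tfrac14(a-a'+c)(a'-a+c)=\tfrac14\bigl(c^2-(a-a')^2\bigr),
\end{align*}
so all terms linear in $c$ cancel identically; summing over $i,j$, the squares $a^2,a'^2$ contribute $-\frac12\sum_{i,j}\langle XE_i,E_j\rangle^2$, the cross term $2aa'$ contributes $+\frac12\operatorname{Tr}L_X^2=+\frac12 B(X,X)$, and $c^2$ contributes $+\frac14\sum_{i,j}\langle E_iE_j,X\rangle^2$. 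In other words, the squared terms (not the cross products) produce the negative term, and the cross products (not some other combination) produce the Killing form. With this local correction your argument closes and coincides with the paper's proof.
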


\begin{proof}
Let $\{E_i\}$ be an orthonormal basis of $(\mathscr{A},\langle\cdot, \cdot\rangle)$. Since by Theorem~\ref{unique}
\begin{align}\label{k}
\langle\nabla_{E_i} U, E_j\rangle=\frac{1}{2}(\langle UE_i, E_j\rangle-\langle UE_j, E_i\rangle+\langle E_iE_j, U\rangle),
\end{align}
for all $i, j$, and $U \in \mathscr{A}$, then
\begin{align*}
\operatorname{Tr}{(\nabla U)^2} & =\sum_j\langle\nabla_{\nabla_{E_j} U}U, E_j\rangle \\
& =\sum_{i, j}\langle\nabla_{E_i}U, E_j\rangle\langle\nabla_{E_j}U, E_i\rangle \\
& =\frac{1}{4} \sum_{i, j}\langle E_iE_j, U\rangle^2-\underbrace{\frac{1}{4} \sum_{i, j}(\langle UE_i, E_j\rangle
-\langle UE_j, E_i\rangle)^2}_{\textnormal{I}}.
\end{align*}
It follows that
\begin{align*}
\textnormal{I} & =\frac{1}{4} \sum_{i, j}\langle UE_i, E_j\rangle^2-\frac{1}{2} \sum_{i, j}\langle UE_i, E_j\rangle\langle UE_j, E_i\rangle+\frac{1}{4} \sum_{i, j}\langle UE_j, E_i\rangle^2 \\
& =\frac{1}{2} \sum_{i, j}\langle UE_i, E_j\rangle^2-\frac{1}{2} \sum_i\langle U(UE_i), E_i\rangle \\
& =\frac{1}{2} \sum_{i, j}\langle UE_i, E_j\rangle^2-\frac{1}{2} \operatorname{Tr} L_UL_U .
\end{align*}
So
\begin{align*}
\operatorname{Tr}{(\nabla U)^2}=\frac{1}{4} \sum_{i, j}\langle E_iE_j, U\rangle^2-\frac{1}{2} \sum_{i, j}\langle UE_i, E_j\rangle^2+\frac{1}{2} B(U, U).
\end{align*}
Note that
\begin{align*}
\operatorname{Tr} {\nabla U^2}=\sum_i\langle\nabla_{E_i} U^2, E_i\rangle=\sum_i\langle U^2, \nabla_{E_i}E_i\rangle=\sum_i\langle U^2, \frac{1}{2} E_i^2\rangle=\frac{1}{2}\langle U^2, H\rangle.
\end{align*}
Then by Lemma~\ref{JRTrF}, we have
\begin{align*}
\operatorname{Ric}(U, U)=\operatorname{Tr}{(\nabla U)^2}-\frac{1}{2} \operatorname{Tr} {\nabla U^2}=-\frac{1}{2} \sum_{i, j}\langle UE_i, E_j\rangle^2+\frac{1}{4}\sum_{i, j}\langle E_iE_j, U\rangle^2+\frac{1}{2}B(U, U)-\frac{1}{4}\langle H, U^2\rangle.
\end{align*}
This completes the proof.
\end{proof}

\begin{remark}\label{operator}
By Theorem~\ref{RF}, the Ricci operator of $(\mathscr{A},\langle\cdot,\cdot\rangle)$ can be written  as follows
\begin{align}\label{Ricciformula}
\textnormal{Ric}=\textnormal{M}+\frac{1}{2}B-\frac{1}{4}S_H,
\end{align}
where \textnormal{M} coincides up to a scalar $\frac{1}{4}$ with the {moment map} for the variety of Jordan algebras (see Lemma~\ref{M-formu}), $B$ is the symmetric operator defined by  the Killing form of $\mathscr{A}$, and $S_H$ is the  symmetric operator defined by $\langle S_H X, Y\rangle=\langle H, XY\rangle,~~\forall X, Y \in\mathscr{A}.$ Note  that if $\mathscr{A}$ is nilpotent, and $H\in (\mathscr{AA})^\perp$ (for example $H=0$), then the Ricci operator coincides with \textnormal{M}.
\end{remark}

\begin{remark}\label{ric=Ric}
Similarly, by Remark~\ref{ricTrf}, one    obtains the following result
\begin{align*}
\operatorname{ric}(X, Y)=- & \frac{1}{2} \sum_{i, j}\langle XE_i, E_j\rangle\langle YE_i, E_j\rangle+\frac{1}{4} \sum_{i, j}\langle E_iE_j, X\rangle\langle E_iE_j, Y\rangle \\
& +\frac{1}{2} B(X, Y)-\frac{1}{2}\langle H, \nabla_X Y\rangle, ~~\forall X, Y \in \mathscr{A}.
\end{align*}
Moreover, $\textnormal{ric}=\textnormal{Ric}$ if and only if the operator $L_H$ is self-adjoint. In particular,  $\textnormal{ric}=\textnormal{Ric}$  if $H=0$ or a constant multiple of the identity element.
\end{remark}

\begin{remark}
 Suppose that $(\mathscr{A},\langle\cdot,\cdot\rangle)$ is of constant Jordan curvature, then $(\mathscr{A},\langle\cdot,\cdot\rangle)$ is necessarily Jordan-Einstein. Indeed, by Definition~\ref{JC} we know that there exists a constant $c$ such that
 \begin{align*}
\langle R(X, Y) X, Y\rangle=c(\langle X, X\rangle\langle Y, Y\rangle-\langle X, Y\rangle^2),
 \end{align*}
for all $X, Y \in \mathscr{A}$. So
\begin{align*}
\operatorname{Ric}(U, U)=\sum_{i=1}^{\dim \mathscr{A}}\langle R(U, E_i)U, E_i\rangle=\sum_{i=1}^{\dim \mathscr{A}} c(\langle U, U\rangle\langle E_i, E_i\rangle-\langle U, E_i\rangle^2)=c(\operatorname{dim}\mathscr{A}-1)\langle U, U\rangle, \forall U \in \mathscr{A}.
\end{align*}
That is, $\langle\cdot, \cdot\rangle$ is a Jordan-Einstein metric. The converse is not true, see Remark~\ref{2dim}.
\end{remark}

\subsection{Jordan scalar curvature}
By Definition~\ref{JRT}, we introduce the Jordan scalar curvature for metric Jordan algebras, which is a trace of $\textnormal{Ric}$, i.e.,

\begin{Definition}\label{JSC}
For a metric Jordan algebra $(\mathscr{A},\langle\cdot,\cdot\rangle)$, the Jordan scalar curvature is $\textnormal{sc}=\textnormal{Tr}_{\langle\cdot,\cdot\rangle}{\textnormal{Ric}}.$
\end{Definition}

By Theorem~\ref{RF}, it is easy to see that the Jordan scalar curvature is given by
\begin{align}\label{scf}
\textnormal{sc}=-\frac{1}{4} \sum_{i, j, k}\langle E_iE_j, E_k\rangle^2+\frac{1}{2}\sum_{i}B(E_i,E_i)-\frac{1}{4}\langle H,H\rangle.
\end{align}
where $\{E_i\}$ is an orthonormal basis of $(\mathscr{A},\langle\cdot,\cdot\rangle)$. It follows that the Jordan scalar curvature \textnormal{sc} of a
nilpotent metric Jordan algebra satisfies $\textnormal{sc}\leq 0$, and the equality holds if and only if $\mathscr{A}$ is trivial, i.e.,
$XY=0$ for any $X, Y \in \mathscr{A}.$

\section{The Jordan curvature of metric Jordan algebras}\label{Curvature}
In this section, we study the Jordan curvature of formally real Jordan algebras, which are counterparts of compact real forms in complex semisimple Lie algebras (see Appendix~\ref{frja} for a comparison). By Proposition~\ref{FRJAC}, a Jordan algebra $\mathscr{A}$ with an identity element is formally real, if and only if $\mathscr{A}$ admits an associative inner product.

\begin{Lemma}\label{1/2}
Let $(\mathscr{A},\langle\cdot, \cdot\rangle)$ be a metric Jordan algebra. Assume that $\langle\cdot, \cdot\rangle$, is associative, then the Jordan-Levi-Civita connection is given by
\begin{align*}
\nabla_XY=\frac{1}{2}XY,~\forall X, Y \in \mathscr{A}.
\end{align*}
Moreover,
\begin{align*}
R(X,Y)Z+R(Y, Z)X+R(Z,X)Y=0,
\end{align*}
for all $X, Y, Z \in \mathscr{A}$.
\end{Lemma}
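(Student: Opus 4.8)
The plan is to establish the two assertions separately, both by direct substitution into formulas already at hand. For the first claim, I would start from the Koszul-type expression for the Jordan-Levi-Civita connection furnished by Theorem~\ref{unique}, namely
$$\langle\nabla_X Y, Z\rangle=\tfrac{1}{2}\left(\langle XY, Z\rangle-\langle YZ, X\rangle+\langle ZX, Y\rangle\right).$$
The key observation is that associativity of $\langle\cdot,\cdot\rangle$ together with commutativity of the Jordan product forces the last two terms to cancel: since $\langle YZ, X\rangle=\langle Y, ZX\rangle=\langle ZX, Y\rangle$ — using associativity of the inner product, then symmetry of $\langle\cdot,\cdot\rangle$ and $ZX=XZ$ — the right-hand side collapses to $\langle\nabla_X Y, Z\rangle=\tfrac{1}{2}\langle XY, Z\rangle$. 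As $Z$ ranges over all of $\mathscr{A}$ and $\langle\cdot,\cdot\rangle$ is nondegenerate, this yields $\nabla_X Y=\tfrac{1}{2}XY$.

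For the second claim, I would substitute $\nabla_X Y=\tfrac{1}{2}XY$ directly into the definition of the curvature tensor (Def.~\ref{JCT}). A short computation gives
$$R(X,Y)Z=-\tfrac{1}{4}X(YZ)-\tfrac{1}{4}Y(XZ)+\tfrac{1}{2}(XY)Z.$$
Then I would form the cyclic sum $R(X,Y)Z+R(Y,Z)X+R(Z,X)Y$ and group the nine resulting terms. The three ``$+\tfrac{1}{2}$'' terms, after rewriting $(XY)Z=Z(XY)$, $(YZ)X=X(YZ)$ and $(ZX)Y=Y(XZ)$ via commutativity, sum to $\tfrac{1}{2}\left[X(YZ)+Y(XZ)+Z(XY)\right]$; the six ``$-\tfrac{1}{4}$'' terms, grouped by leading factor and again using commutativity (for instance $X(YZ)=X(ZY)$), sum to $-\tfrac{1}{2}\left[X(YZ)+Y(XZ)+Z(XY)\right]$. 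The two blocks cancel exactly, giving the desired identity.

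I would emphasize that this cyclic (first-Bianchi-type) identity uses only the commutativity of the Jordan product and the explicit form $\nabla_X Y=\tfrac{1}{2}XY$; the Jordan identity (\ref{weakass}) plays no role here, which is perhaps the one point worth highlighting. There is no genuine obstacle in the argument — it is an elementary verification — so the only care required is bookkeeping: correctly pairing the terms by their leading factor and consistently applying commutativity in the second part, and (for the first part) being careful that associativity of the inner product is exactly what identifies $\langle YZ, X\rangle$ with $\langle ZX, Y\rangle$ so that the two off-diagonal terms in the Koszul formula cancel.
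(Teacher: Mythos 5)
Your proposal is correct and follows essentially the same route as the paper: the first claim comes from collapsing the Koszul formula of Theorem~\ref{unique} using associativity (and symmetry) of the inner product, and the second from substituting $\nabla_X Y=\tfrac{1}{2}XY$ into Definition~\ref{JCT} and observing that the cyclic sum cancels term by term via commutativity. The only cosmetic difference is bookkeeping — the paper rewrites each curvature term in the form $\tfrac{1}{2}(XY)Z-\tfrac{1}{4}(YZ)X-\tfrac{1}{4}(ZX)Y$ so that each product type appears with coefficients $\tfrac12,-\tfrac14,-\tfrac14$ in the cyclic sum, while you group the nine terms by leading factor — and your observation that the Jordan identity is never used is equally true of the paper's argument.
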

\begin{proof}
Since $\langle\cdot, \cdot\rangle$ is associative, then
\begin{align*}
\langle X Y, Z\rangle=\langle X, Y Z\rangle, ~~\forall X, Y, Z \in \mathscr{A}.
\end{align*}
By Theorem~\ref{unique}, we have
\begin{align}\label{12xy}
\nabla_X Y=\frac{1}{2} XY, ~~\forall X,Y \in \mathscr{A}.
\end{align}
This proves the first statements. Moreover, it follows from (\ref{12xy}) that
\begin{align*}
R(X,Y)Z&=\nabla_{XY}Z-\nabla_X\nabla_YZ-\nabla_Y\nabla_XZ \\
&=\frac{1}{2}(XY)Z-\frac{1}{2}\nabla_XYZ-\frac{1}{2} \nabla_YXZ \\
&=\frac{1}{2}(XY)Z-\frac{1}{4} X(YZ)-\frac{1}{4}Y(XZ) \\
&=\frac{1}{2}(XY)Z-\frac{1}{4}(YZ)X-\frac{1}{4}(ZX)Y.
\end{align*}
Similarly,
\begin{align*}
R(Y,Z)X&=\frac{1}{2}(YZ)X-\frac{1}{4}(ZX)Y-\frac{1}{4}(XY)Z,\\
R(Z,X)Y&=\frac{1}{2}(ZX)Y-\frac{1}{4}(XY)Z-\frac{1}{4}(YZ)X.
\end{align*}
So
\begin{align*}
R(X,Y)Z+R(Y, Z)X+R(Z,X)Y=0,
\end{align*}
for all $X, Y, Z \in \mathscr{A}$. This completes the proof.
\end{proof}

\begin{Lemma}\label{inequality}
Let $\mathscr{A}$ be a formally real Jordan algebra, and $\langle\cdot, \cdot\rangle$ be an associative inner product on it. Then we have the following inequality
\begin{align}\label{XYXY}
\langle XY, XY\rangle \leq\langle X^2, Y^2\rangle, ~~\forall X, Y \in \mathscr{A}.
\end{align}
\end{Lemma}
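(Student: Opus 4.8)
The plan is to reformulate the inequality (\ref{XYXY}) as the positive semidefiniteness of a single self-adjoint operator assembled from left multiplications, and then to establish that positivity via the spectral theory available in a formally real Jordan algebra. First I would exploit the associativity of $\langle\cdot,\cdot\rangle$. Writing $L_Z W=ZW$, associativity together with commutativity gives $\langle L_Z A,B\rangle=\langle A,L_Z B\rangle$, so every $L_Z$ is self-adjoint. Using this, both sides of the desired inequality become quadratic forms in $X$: indeed $\langle XY,XY\rangle=\langle L_Y X,L_Y X\rangle=\langle L_Y^2 X,X\rangle$, while associativity yields $\langle X^2,Y^2\rangle=\langle X,XY^2\rangle=\langle L_{Y^2}X,X\rangle$. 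Hence (\ref{XYXY}) is equivalent to the claim that the self-adjoint operator $T_Y:=L_{Y^2}-L_Y^2$ satisfies $\langle T_Y X,X\rangle\ge 0$ for all $X$, i.e.\ that $T_Y$ is positive semidefinite for every $Y\in\mathscr{A}$.

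To prove $T_Y\ge 0$, I would bring in three structural inputs. The Jordan identity, in the operator form $[L_Y,L_{Y^2}]=0$ recorded after (\ref{weakass}), shows that $L_Y$, $L_Y^2$ and $L_{Y^2}$ all commute, so $T_Y$ is simultaneously diagonalizable with them. Next, the spectral theorem for formally real Jordan algebras gives $Y=\sum_i\lambda_i c_i$ with $\{c_i\}$ a complete system of orthogonal idempotents, whence $L_Y=\sum_i\lambda_i L_{c_i}$ and $L_{Y^2}=\sum_i\lambda_i^2 L_{c_i}$. Finally, the associated Peirce decomposition $\mathscr{A}=\bigoplus_{i\le j}\mathscr{A}_{ij}$ is orthogonal with respect to the associative inner product, the $L_{c_k}$ commute pairwise, and $L_{c_k}$ acts on $\mathscr{A}_{ij}$ by the scalar $\tfrac12(\delta_{ik}+\delta_{jk})$. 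Consequently $L_Y$ acts on $\mathscr{A}_{ij}$ by $\tfrac12(\lambda_i+\lambda_j)$ and $L_{Y^2}$ by $\tfrac12(\lambda_i^2+\lambda_j^2)$, so $T_Y$ acts there by the scalar
\[
\tfrac12(\lambda_i^2+\lambda_j^2)-\tfrac14(\lambda_i+\lambda_j)^2=\tfrac14(\lambda_i-\lambda_j)^2\ge 0.
\]
Because the Peirce pieces are mutually orthogonal and $T_Y$ is a nonnegative scalar on each, $T_Y$ is positive semidefinite, which gives (\ref{XYXY}).

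The reduction in the first paragraph is purely formal; the genuine content is the positivity of $T_Y$, whose proof rests entirely on the spectral and Peirce theory of formally real Jordan algebras, namely the orthogonality of the Peirce spaces under an associative inner product and the eigenvalue table for the $L_{c_k}$ (for which the paper's Appendix and Faraut--Koranyi suffice). I expect this invocation to be the main obstacle, since the underlying sign ultimately comes from the elementary convexity inequality $\tfrac12(\lambda_i^2+\lambda_j^2)\ge\big(\tfrac12(\lambda_i+\lambda_j)\big)^2$, and one must be careful that $L_{Y^2}$ is \emph{not} in general a polynomial in $L_Y$ (their eigenvalues $\tfrac12(\lambda_i^2+\lambda_j^2)$ versus $\tfrac14(\lambda_i+\lambda_j)^2$ already differ on the off-diagonal Peirce spaces), so functional calculus in $L_Y$ alone does not suffice and the full spectral decomposition of the element $Y$ is essential.
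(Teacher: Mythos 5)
Your proof is correct and follows essentially the same route as the paper's: spectral decomposition of one fixed element relative to a Jordan frame, the Peirce decomposition (orthogonal under the associative inner product, with $L$-operators acting by scalars on each Peirce space), and the elementary convexity inequality $\left(\frac{\lambda_i+\lambda_j}{2}\right)^2 \le \frac{\lambda_i^2+\lambda_j^2}{2}$. The only cosmetic differences are that you package the inequality as positive semidefiniteness of the self-adjoint operator $T_Y=L_{Y^2}-L_Y^2$ and invoke the general Faraut--Kor\'anyi spectral/Peirce theory directly, whereas the paper first reduces to the simple case (via the decomposition into simple ideals and uniqueness of the associative form) so that its Appendix version of the Peirce decomposition applies.
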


\begin{proof}
By Theorem~\ref{FRJSS} and Proposition~\ref{simpleuni}, it suffices to prove the lemma in the case that $\mathscr{A}$ is simple. Now, let $\mathscr{A}$ be a simple formally real Jordan algebra, and $r$ be the rank of $\mathscr{A}$. By the associativity of the inner product and the commutativity of $\mathscr{A}$
\begin{align*}
\langle XY, XY\rangle=\langle X(XY), Y\rangle=\langle L_X^2(Y), Y\rangle,
\end{align*}
and
\begin{align*}
\langle X^2, Y^2\rangle=\langle X^2Y, Y\rangle=\langle L_{X^2}(Y), Y\rangle.
\end{align*}
Fix $X$, then both sides are quadratic forms in $Y$.  Consider the spectral decomposition of $X$:
\begin{align*}
X=\sum_{i=1}^r \lambda_iH_i,
\end{align*}
where $\{H_1, \cdots, H_r\}$ is a Jordan frame and $\lambda_1, \cdots, \lambda_r \in \mathbb{R}$. According to the Peirce decomposition of $\mathscr{A}$ with respect to $\left\{H_1, \cdots, H_r\right\}$ (see Theorem~\ref{Peirce}), we have the following orthogonal direct sum
\begin{align*}
\mathscr{A}=\bigoplus_{i=1}^r\mathbb{R}H_i\bigoplus_{i<j} \mathscr{A}_{ij}.
\end{align*}
Let
\begin{align*}
Y=\sum_{i=1}^r \mu_iH_i+\sum_{i<j} Y_{ij},
\end{align*}
where $\mu_i \in \mathbb{R}$ and $Y_{ij} \in \mathscr{A}_{i j}$. Then
\begin{align*}
L_X(Y)=\sum_{i=1}^r\lambda_i\mu_i H_i+\sum_{i<j}\frac{\lambda_i+\lambda_j}{2} Y_{ij}.
\end{align*}
It follows that
\begin{align*}
\langle L_X^2(Y), Y\rangle& =\sum_{i=1}^r \lambda_i^2\mu_i^2+\sum_{i<j}(\frac{\lambda_i+\lambda_j}{2})^2\|Y_{ij}\|^2, \\
\langle L_{X^2}(Y), Y\rangle&=\sum_{i=1}^r \lambda_i^2\mu_i^2+\sum_{i<j}\frac{\lambda_i^2+\lambda_j^2}{2}\|Y_{ij}\|^2 .
\end{align*}
The inequality follows since
\begin{align*}
(\frac{\lambda_i+\lambda_j}{2})^2 \leq \frac{\lambda_i^2+\lambda_j^2}{2}.
\end{align*}
This completes the proof.
\end{proof}

\begin{exam}
Consider the  formally real Jordan algebra  $\operatorname{Sym}(n,\mathbb{R})$ in Remark~\ref{classificationlist}, i.e., the space of  $n\times n$  real symmetric matrices   with the usual Jordan multiplication   $A \circ B:=\frac{1}{2}(A B+B A),\forall A,B\in \operatorname{Sym}(n,\mathbb{R}).$  It is easily seen that
$$\langle A, B\rangle:=\operatorname{Tr} {AB^t}=\operatorname{Tr} {AB},~\forall A,B\in \operatorname{Sym}(n,\mathbb{R}),$$
is an associative inner product on $\operatorname{Sym}(n,\mathbb{R})$. In this case, Lemma~\ref{inequality} reads as follows
$$\operatorname{Tr}{(A\circ B)\circ(A\circ B)}\leq\operatorname{Tr}{(A\circ A)\circ(B\circ B)}, ~\forall A,B\in \operatorname{Sym}(n,\mathbb{R}),$$
which is also equivalent to the following known inequality
$$\operatorname{Tr}{ABAB}\leq\operatorname{Tr}{AABB},$$
for all $A,B\in \operatorname{Sym}(n,\mathbb{R}).$
\end{exam}

\begin{remark}\label{XY=}
By the proof of Lemma~\ref{inequality}, we know that if $\langle XY, XY\rangle=\langle X^2, Y^2\rangle$ for all $X, Y \in \mathscr{A}$, then the formally real Jordan algebra $\mathscr{A}$ is necessarily a direct sum of the one-dimensional simple Jordan algebra (i.e., $e_{1}e_1=e_1$, $r=1$).
\end{remark}

\begin{theorem}\label{nonpositive}
Let $\mathscr{A}$ be a formally real Jordan algebra, and $\langle\cdot, \cdot\rangle$ be an associative inner product on it. Then the Jordan curvature  of $(\mathscr{A},\langle\cdot, \cdot\rangle)$ is non-positive. Moreover, the Jordan curvature vanishes identically if and only $\mathscr{A}$ is a direct sum of the one-dimensional simple Jordan algebra.
\end{theorem}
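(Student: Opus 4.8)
The plan is to reduce the theorem to the inequality already established in Lemma~\ref{inequality}, using the explicit curvature formula from Lemma~\ref{1/2}. Since $\langle\cdot,\cdot\rangle$ is an associative inner product on the formally real Jordan algebra $\mathscr{A}$, Lemma~\ref{1/2} applies and gives both $\nabla_XY=\frac12XY$ and the explicit expression $R(X,Y)Z=\frac12(XY)Z-\frac14(YZ)X-\frac14(ZX)Y$. First I would specialize this to $Z=X$; using commutativity ($YX=XY$ and $XX=X^2$) the first two terms collapse, yielding $R(X,Y)X=\frac14\big((XY)X-X^2Y\big)$.

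Next I would pair this with $Y$ and invoke associativity of the inner product to land exactly on the two sides of Lemma~\ref{inequality}. Concretely, $\langle (XY)X,Y\rangle=\langle XY,XY\rangle$ and $\langle X^2Y,Y\rangle=\langle X^2,Y^2\rangle$, so that
\[
\langle R(X,Y)X,Y\rangle=\tfrac14\big(\langle XY,XY\rangle-\langle X^2,Y^2\rangle\big).
\]
By Lemma~\ref{inequality} the parenthesized difference is $\le 0$. Since for linearly independent $X,Y$ the denominator $\langle X,X\rangle\langle Y,Y\rangle-\langle X,Y\rangle^2$ in Definition~\ref{JC} is strictly positive by the Cauchy--Schwarz inequality, this shows $\mathcal{J}(X,Y)\le 0$, i.e.\ the Jordan curvature is non-positive.

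For the rigidity statement I would observe that the displayed identity holds for \emph{all} $X,Y$ (no independence hypothesis is used in deriving it), so the Jordan curvature vanishes identically precisely when $\langle XY,XY\rangle=\langle X^2,Y^2\rangle$ for all $X,Y\in\mathscr{A}$. Remark~\ref{XY=} then identifies this equality case as exactly the condition that $\mathscr{A}$ be a direct sum of copies of the one-dimensional simple Jordan algebra, which is the claimed characterization.

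I do not expect a genuine obstacle here: the analytic heart of the theorem is the elementary eigenvalue inequality $\big(\tfrac{\lambda_i+\lambda_j}{2}\big)^2\le\tfrac{\lambda_i^2+\lambda_j^2}{2}$ already packaged into Lemma~\ref{inequality}. The only care needed is bookkeeping --- correctly collapsing the three curvature terms after setting $Z=X$, and making sure the phrase \emph{vanishes identically} is translated into the pointwise equality $\langle XY,XY\rangle=\langle X^2,Y^2\rangle$ before invoking the rigidity recorded in Remark~\ref{XY=}.
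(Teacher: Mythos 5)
Your proposal is correct and follows essentially the same route as the paper's own proof: specialize the curvature formula of Lemma~\ref{1/2} to $Z=X$, use associativity of the inner product to get $\langle R(X,Y)X,Y\rangle=\tfrac14(\langle XY,XY\rangle-\langle X^2,Y^2\rangle)$, and conclude via Lemma~\ref{inequality} and Remark~\ref{XY=}. The only difference is that you spell out the bookkeeping (positivity of the denominator in Definition~\ref{JC}, the reduction of ``vanishes identically'' to the pointwise equality case) which the paper leaves implicit.
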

\begin{proof}
By Lemma~\ref{1/2}, we know that
\begin{align*}
R(X,Y)X=\frac{1}{2}(XY)X-\frac{1}{4}(YX)X-\frac{1}{4}(X^2)Y=\frac{1}{4}(XY)X-\frac{1}{4}(X^2)Y,
\end{align*}
for any $X, Y \in \mathscr{A}$. Since the inner product $\langle\cdot, \cdot\rangle$ is associative, we have
\begin{align*}
\langle R(X, Y)X, Y\rangle=\frac{1}{4}\langle(XY)X-(X^2)Y, Y\rangle=\frac{1}{4}\langle XY, XY\rangle-\frac{1}{4}\langle X^2, Y^2\rangle.
\end{align*}
The theorem is completed by Lemma~\ref{inequality} and Remark~\ref{XY=}.
\end{proof}

\section{The Jordan Ricci curvature of metric Jordan algebras}\label{Ricci}
In this section, we explore Jordan-Einstein metrics on Jordan algebras. We show that every simple formally
real Jordan algebra (of  dimension at least  two) admits a Jordan-Einstein metric  of negative Jordan scalar curvature. For  nontrivial nilpotent Jordan algebras, we prove that they admit no Jordan-Einstein metrics. Moreover,  we construct some  examples of Jordan-Einstein metrics.

\subsection{Jordan-Einstein metrics on formally real Jordan algebras}
Let $\mathscr{A}$ be a simple formally real Jordan algebra of rank $r$ and $E$ be the identity element. Then with respect to a Jordan frame $\{H_1, H_2, \cdots, H_r\},$ $\mathscr{A}$ has the following Peirce decomposition (see Theorem~\ref{Peirce})
\begin{align*}
\mathscr{A}=\bigoplus_{i\leq j}\mathscr{A}_{ij}=\bigoplus_{i=1}^r\mathbb{R}H_i\bigoplus_{i<j} \mathscr{A}_{ij}.
\end{align*}
For $r \geq 3$, it is known that the number $d=\operatorname{dim} \mathscr{A}_{ij}$ does not depend on $i,j$. So
\begin{align}\label{dim}
n=\dim\mathscr{A}=\dim\bigoplus_{i\leq j}\mathscr{A}_{ij}=\sum_{i=1}^r \dim \mathscr{A}_{ii}+\sum_{1\leq i<j\leq r}\dim\mathscr{A}_{ij}=r+\frac{r(r-1)}{2}d.
\end{align}
For $r=2$, we set $d=n-2$. It follows from the trace formula in \cite{FK1994} that
\begin{align}\label{KilTr}
B(X,Y)=\operatorname{Tr} L_XL_Y=\alpha\operatorname{tr}(XY)+\frac{d}{4}\operatorname{tr}(X)\cdot\operatorname{tr}(Y), ~~\forall X, Y \in \mathscr{A},
\end{align}
where $\alpha=1+\frac{(r-2)}{4}d$.

Now, we state the main result of this section.
\begin{theorem}\label{JEM}
Every simple formally real Jordan algebra admits a Jordan-Einstein metric.
\end{theorem}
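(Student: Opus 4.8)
The plan is to exploit the highly symmetric structure provided by the Peirce decomposition of a simple formally real Jordan algebra. Since $\mathscr{A}$ is simple and formally real, Proposition~\ref{simpleuni} supplies an associative inner product (the reduced trace form $\operatorname{tr}(XY)$, suitably normalized), and we saw in Lemma~\ref{1/2} that under an associative inner product the connection is simply $\nabla_XY=\frac{1}{2}XY$. The natural candidate for a Jordan-Einstein metric is therefore $\langle X,Y\rangle=c\,\operatorname{tr}(XY)$ for an appropriate positive constant $c$, and the entire task reduces to computing the Ricci operator with respect to this metric and checking it is a scalar multiple of the identity.

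First I would use Theorem~\ref{RF} together with the associativity of $\langle\cdot,\cdot\rangle$ to simplify the Ricci tensor. When the metric is associative, the Killing form term and the moment-map term can both be rewritten in terms of the multiplication operators $L_X$, and crucially the mean curvature vector $H=\sum_i E_i^2$ becomes computable: by associativity and the identity element, $\langle H,Z\rangle=\sum_i\langle E_i^2,Z\rangle=\sum_i\langle E_i,E_iZ\rangle=\operatorname{Tr} L_Z$, so $H$ is the vector dual to the linear functional $Z\mapsto\operatorname{Tr} L_Z=\tau(E,Z)$ under $\langle\cdot,\cdot\rangle$. Because $\tau$ is associative (Lemma~\ref{tau}) and $\mathscr{A}$ is simple, $\tau$ is a scalar multiple of $\operatorname{tr}(XY)$, hence of $\langle\cdot,\cdot\rangle$ itself; this forces $H$ to be a scalar multiple of the identity element $E$. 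That is the key structural simplification: on a simple formally real Jordan algebra with the trace metric, $H=\gamma E$ for an explicit constant $\gamma$ (computable from $\operatorname{tr}(E)=r$ and the trace formula).

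Next I would evaluate each of the four terms in Theorem~\ref{RF} on the Peirce basis $\{H_i\}\cup\{\text{orthonormal bases of }\mathscr{A}_{ij}\}$. The Peirce multiplication rules (Theorem~\ref{Peirce}) make all the structure constants $\langle E_iE_j,E_k\rangle$ fall into a small number of cases governed only by the indices $i,j$ and the common dimension $d$, so each sum $\sum_{i,j}\langle XE_i,E_j\rangle\langle YE_i,E_j\rangle$, $\sum_{i,j}\langle E_iE_j,X\rangle\langle E_iE_j,Y\rangle$, and the Killing-form term $B(X,Y)$ (already given explicitly by \eqref{KilTr}) becomes an explicit associative bilinear expression. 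Since $H=\gamma E$, the last term $-\frac14\langle H,XY\rangle=-\frac{\gamma}{4}\langle E,XY\rangle=-\frac{\gamma}{4}\operatorname{tr}(XY)\cdot(\text{const})$ is itself a multiple of $\langle X,Y\rangle$. The heart of the computation is to show that the moment-map contribution and the Killing-form contribution together also reduce to a multiple of $\operatorname{tr}(XY)$; this is where the homogeneity of the Peirce decomposition (the independence of $d=\dim\mathscr{A}_{ij}$ from $i,j$, plus the rank-one block structure on the diagonal) is indispensable. Any term that is \emph{not} proportional to $\operatorname{tr}(XY)$ would have to distinguish diagonal from off-diagonal Peirce components, and the symmetry of the Jordan frame rules this out.

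The main obstacle I anticipate is the bookkeeping in the moment-map term $-\frac12\sum_{i,j}\langle XE_i,E_j\rangle\langle YE_i,E_j\rangle$: one must verify that when $X,Y$ are decomposed into Peirce components, the cross terms between distinct Peirce subspaces vanish and the diagonal contributions carry a single common coefficient, so that the sum collapses to a multiple of $\operatorname{tr}(XY)$. Establishing this cleanly likely requires the full strength of the Peirce calculus — in particular the rule $\mathscr{A}_{ij}\mathscr{A}_{jk}\subseteq\mathscr{A}_{ik}$ and the orthogonality of the Peirce components under an associative inner product — rather than a term-by-term grind. Once every term is shown to be a scalar multiple of $\langle\cdot,\cdot\rangle$, assembling the coefficients via \eqref{dim} and \eqref{KilTr} yields $\operatorname{Ric}=\lambda\,\langle\cdot,\cdot\rangle$ with an explicit $\lambda$, and tracing over an orthonormal basis (using \eqref{scf}) will show $\lambda<0$ in the non-trivial case $\dim\mathscr{A}\ge 2$, giving the negative Jordan scalar curvature asserted in the companion Corollary~\ref{Corofss}.
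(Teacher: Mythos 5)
Your proposal fails at exactly the step you defer to ``bookkeeping'': the candidate metric $\langle X,Y\rangle = c\,\operatorname{tr}(XY)$ is \emph{not} Jordan--Einstein for any simple formally real Jordan algebra of rank $r\geq 2$, and no choice of the constant $c$ can help, since rescaling the metric leaves the bilinear form $\operatorname{Ric}$ of Theorem~\ref{RF} unchanged (all four terms are invariant under $\langle\cdot,\cdot\rangle\mapsto c\langle\cdot,\cdot\rangle$ once the orthonormal basis is rescaled accordingly). The computation you outline is exactly Lemma~\ref{Ric0} of the paper, and its outcome is
\[
\operatorname{Ric}_0(X,Y)=-\tfrac{rd}{16}\langle X,Y\rangle_0+\tfrac{d}{16}\operatorname{tr}(X)\operatorname{tr}(Y),
\]
so $\operatorname{Ric}_0(E,\cdot)=0$ while $\operatorname{Ric}_0$ is strictly negative on trace-free vectors; since $d\geq 1$ for every simple formally real Jordan algebra of rank $\geq 2$ (Remark~\ref{classification}), this is never proportional to $\langle\cdot,\cdot\rangle_0$. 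The defect comes from the point where your symmetry argument is wrong: the Killing form $B(X,Y)=\operatorname{Tr}L_XL_Y$ of a Jordan algebra is not associative and, even in the simple case, is not a multiple of $\operatorname{tr}(XY)$ --- by (\ref{KilTr}) it contains the rank-one piece $\tfrac{d}{4}\operatorname{tr}(X)\operatorname{tr}(Y)$. Your claim that any term not proportional to $\operatorname{tr}(XY)$ would have to ``distinguish diagonal from off-diagonal Peirce components'' and is therefore excluded by Jordan-frame symmetry overlooks that the identity $E$ spans an automorphism-invariant line: the splitting $\mathscr{A}=\mathbb{R}E\oplus E^{\perp}$ is preserved by $\operatorname{Aut}(\mathscr{A})$, so the space of invariant symmetric bilinear forms is two-dimensional, spanned by $\operatorname{tr}(XY)$ and $\operatorname{tr}(X)\operatorname{tr}(Y)$; Proposition~\ref{simpleuni} pins down only the \emph{associative} ones.

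The missing idea --- and the actual content of the paper's proof --- is that one must leave the class of associative metrics. Having computed $\operatorname{Ric}_0$ as above, the paper deforms the trace metric by the one-parameter family $\langle\cdot,\cdot\rangle_t=\langle g_t^{-1}(\cdot),g_t^{-1}(\cdot)\rangle_0$, which stretches only the direction of $E$, recomputes the moment-map term, the Killing term and the mean-curvature term in the deformed orthonormal basis (Lemma~\ref{Rict}), and solves a scalar equation in $t$, finding $t=\tfrac12\ln\bigl(1-\tfrac{rd}{8+4(r-1)d}\bigr)<0$. The Einstein metric so obtained is non-associative, so it is invisible to your framework, in which $\nabla_XY=\tfrac12XY$ by Lemma~\ref{1/2}. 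The parts of your outline that do survive are the identification $H_0=\tfrac{n}{r}E$ (dual to $Z\mapsto\operatorname{Tr}L_Z$) and the collapse of the moment-map term to $-\tfrac14B(X,Y)$ under associativity; both appear verbatim inside Lemma~\ref{Ric0}, but they are the starting point of the proof, not its conclusion.
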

In order to prove Theorem~\ref{JEM}, we need some preparation work. For a simple formally real Jordan algebra $\mathscr{A}$, by Lemma~\ref{trass} and Proposition~\ref{FRJAC}, we know that $\operatorname{tr}(XY)$ a positive definite symmetric bilinear form which is associative. Using (\ref{ratio}), we have the following  result.

\begin{Lemma}\label{Ric0}
Let $(\mathscr{A},\langle\cdot,\cdot\rangle_0)$ be a simple formally real algebra of fixed dimension $n$ and rank $r$, where
\begin{align*}
\langle X, Y\rangle_0:=\operatorname{tr}(XY)=\frac{r}{n}\tau(X, Y)=\frac{r}{n} \operatorname{Tr}L_{XY}, ~~\forall X, Y \in \mathscr{A}.
\end{align*}
Then the mean curvature vector of $(\mathscr{A},\langle\cdot, \cdot\rangle_0)$ is
\begin{align*}
H_0=\frac{n}{r}E=\left(1+\frac{(r-1)}{2}d\right)E.
\end{align*}
Moreover, the Jordan Ricci tensor $\textnormal{Ric}_0=\mathrm{ric}_0$ of $(\mathscr{A},\langle\cdot,\cdot\rangle_0)$ is given by
\begin{align*}
\textnormal{Ric}_0(X, Y)=-\frac{rd}{16}\langle X, Y\rangle_0+\frac{d}{16}\textnormal{tr}(X)\cdot\textnormal{tr}(Y),
\end{align*}
for all $X, Y \in \mathscr{A}$.
\end{Lemma}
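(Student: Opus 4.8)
The plan is to compute the mean curvature vector $H_0$ first, then substitute everything into the curvature formula of Theorem~\ref{RF}, using throughout that $\langle\cdot,\cdot\rangle_0$ is associative, so that each left-multiplication $L_X$ is self-adjoint. Associativity (equivalently self-adjointness of $L_X$) is what makes every sum over an orthonormal basis collapse into an intrinsic trace, and this is the engine of the whole computation.

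For the mean curvature vector, I would test $H_0$ against an arbitrary $X$. Writing $\{E_i\}$ for a $\langle\cdot,\cdot\rangle_0$-orthonormal basis, associativity and commutativity give $\langle E_i^2, X\rangle_0=\langle E_i, E_iX\rangle_0=\langle E_i, L_XE_i\rangle_0$, so that $\langle H_0, X\rangle_0=\sum_i\langle L_XE_i, E_i\rangle_0=\operatorname{Tr}L_X$. Since $\operatorname{Tr}L_X=\tau(E,X)$ and $\langle\cdot,\cdot\rangle_0=\frac{r}{n}\tau$, this reads $\langle H_0, X\rangle_0=\frac{n}{r}\langle E, X\rangle_0$ for all $X$, whence $H_0=\frac{n}{r}E$; formula (\ref{dim}) then rewrites $\frac{n}{r}$ as $1+\frac{(r-1)}{2}d$. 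Because $H_0$ is a scalar multiple of the identity, Remark~\ref{ric=Ric} yields $\operatorname{ric}_0=\operatorname{Ric}_0$, so it suffices to compute the symmetric tensor.

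Next I would evaluate the four terms of Theorem~\ref{RF} individually. Writing $(L_X)_{ij}=\langle L_XE_i, E_j\rangle_0$ for the matrix of the self-adjoint operator $L_X$, associativity and commutativity identify both $\langle XE_i, E_j\rangle_0$ and $\langle E_iE_j, X\rangle_0$ with entries of $L_X$; hence the orthonormality of $\{E_i\}$ turns the first sum into $-\frac12\operatorname{Tr}(L_XL_Y)=-\frac12 B(X,Y)$ and the second into $\frac14\operatorname{Tr}(L_XL_Y)=\frac14 B(X,Y)$. Combined with the explicit $+\frac12 B(X,Y)$ term, the three collapse to $\frac14 B(X,Y)$ (equivalently, in the notation of Remark~\ref{operator}, associativity forces the moment-map piece $\textnormal{M}$ to equal $-\frac14 B$). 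The fourth term is $-\frac14\langle H_0, XY\rangle_0=-\frac{n}{4r}\langle E, XY\rangle_0=-\frac{n}{4r}\langle X, Y\rangle_0$, so that $\operatorname{Ric}_0=\frac14 B-\frac{n}{4r}\langle\cdot,\cdot\rangle_0$.

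Finally I would substitute the trace formula (\ref{KilTr}), namely $B(X,Y)=\alpha\operatorname{tr}(XY)+\frac{d}{4}\operatorname{tr}(X)\operatorname{tr}(Y)$ with $\alpha=1+\frac{(r-2)}{4}d$, recalling $\operatorname{tr}(XY)=\langle X, Y\rangle_0$, and collect coefficients; the elementary identity $\alpha-\frac{n}{r}=-\frac{rd}{4}$ produces exactly $-\frac{rd}{16}\langle X, Y\rangle_0+\frac{d}{16}\operatorname{tr}(X)\operatorname{tr}(Y)$. I expect the only genuine subtlety to lie in the third step: recognizing that associativity is precisely what allows the two awkward moment-map sums of Theorem~\ref{RF} to be rewritten as traces of products $L_XL_Y$, and hence as the Killing form. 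Once this collapse is seen, the remainder is bookkeeping with (\ref{KilTr}) and (\ref{dim}), and the fact that all quantities are proportional to $\langle\cdot,\cdot\rangle_0$ and $\operatorname{tr}(X)\operatorname{tr}(Y)$ follows automatically.
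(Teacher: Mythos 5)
Your proposal is correct and follows essentially the same route as the paper's own proof: testing $H_0$ against arbitrary $X$ via associativity to get $H_0=\frac{n}{r}E$, invoking Remark~\ref{ric=Ric} since $H_0$ is a multiple of the identity, collapsing the two moment-map sums of Theorem~\ref{RF} to $\operatorname{Tr}(L_XL_Y)=B(X,Y)$ by self-adjointness of the multiplication operators (so that $\textnormal{M}_0=-\frac14 B$ and $\operatorname{Ric}_0=\frac14 B-\frac{n}{4r}\langle\cdot,\cdot\rangle_0$), and then substituting the trace formula (\ref{KilTr}). The bookkeeping, including the identity $\alpha-\frac{n}{r}=-\frac{rd}{4}$, matches the paper's computation exactly.
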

\begin{proof}
Let $\{E_i\}$ be an orthonormal basis of $(\mathscr{A},\langle\cdot,\cdot\rangle_0)$, then by Definition~\ref{MCV}, the mean curvature vector of $(\mathscr{A},\langle\cdot,\cdot\rangle_0)$ is
\begin{align*}
H_0=\sum_i E_i^2
\end{align*}
Since $\langle\cdot,\cdot\rangle_0$ is associative,  we have
\begin{align*}
\langle X, H_0\rangle_0=\sum_i\langle X, E_i^^2\rangle_0=\sum_i\langle XE_i, E_i\rangle_0=\operatorname{Tr} L_X=\frac{n}{r}\langle X, E\rangle_0,~~\forall X \in \mathscr{A}.
\end{align*}
In particular, $H_0=\frac{n}{r}E$. By (\ref{dim}), $n=r+\frac{r(r-1)}{2}d$, so we have $H_0=\left(1+\frac{(r-1)}{2}d\right)E$. This proves the first statement.

For the second statement. By Remark~\ref{ric=Ric}, we have $\textnormal{Ric}_0= \textnormal{ric}_0$. It follows from Theorem~\ref{RF} that the Jordan Ricci tensor is given by
\begin{align*}
\textnormal{Ric}_0(X, Y)=\langle\textnormal{M}_0 X, Y\rangle_0+\frac{1}{2} B(X, Y)-\frac{1}{4}\langle H_0, XY\rangle_0, ~~\forall X, Y \in \mathscr{A},
\end{align*}
where
\begin{align*}
\langle\textnormal{M}_0 X, Y\rangle_0& =-\frac{1}{2} \sum_{i,j}\langle XE_i, E_j\rangle_0\langle YE_i, E_j\rangle_0+\frac{1}{4} \sum_{i, j}\langle E_iE_j, X\rangle_0\langle E_iE_j, Y\rangle_0 \\
&=-\frac{1}{2} \operatorname{Tr} {L_XL_Y}+\frac{1}{4}\operatorname{Tr} {L_XL_Y} \\
&=-\frac{1}{4} B(X,Y).
\end{align*}
Since $H_0=\frac{n}{r}E=\left(1+\frac{(r-1)}{2}d\right)E$, then
\begin{align*}
\langle H_0, X Y\rangle_0=\frac{n}{r}\langle E, XY\rangle_0=\frac{n}{r}\langle X, Y\rangle_0=\left(1+\frac{(r-1)}{2} d\right)\langle X, Y\rangle_0.
\end{align*}
So
\begin{align*}
\textnormal{Ric}_0(X, Y)=\frac{1}{4} B(X, Y)-\frac{1}{4}\left(1+\frac{(r-1)}{2} d\right)\langle X, Y\rangle_0.
\end{align*}
On the other hand, by (\ref{KilTr}) we have
\begin{align*}
B(X, Y)=\operatorname{Tr} {L_XL_Y}=\left(1+\frac{(r-2)}{4}d\right) \operatorname{tr}(XY)+\frac{d}{4} \operatorname{tr}(X)\cdot\operatorname{tr}(Y).
\end{align*}
It follows that
\begin{align*}
\textnormal{Ric}_0(X, Y)=-\frac{rd}{16}\langle X, Y\rangle_0+\frac{d}{16} \operatorname{tr}(X)\cdot\operatorname{tr}(Y).
\end{align*}
This completes the proof of the Lemma.
\end{proof}

\begin{remark}
It is not hard to see that $\textnormal{Ric}_0 \leq 0$ and $\textnormal{Ric}_0(E, X)=\textnormal{Ric}(X, E)=0$ for any $X \in \mathscr{A}$.
\end{remark}

In the sequel, we show that it is possible to deform the inner product $\langle\cdot,\cdot\rangle_0$ in the direction $E$ so that it becomes a Jordan-Einstein metric on $\mathscr{A}$.

\textbf{Step one:} Consider the following basis of $\left(\mathscr{A},(\cdot, \cdot\rangle_0\right)$
\begin{align}\label{E-basis}
E_1=H_1,E_2=H_2,\cdots, E_r=H_r, \underbrace{E_{12}^{1},\cdots, E_{12}^{d}}_{\mathscr{A}_{12}},\cdots,\underbrace{E_{1r}^{1},\cdots, E_{1r}^{d}}_{\mathscr{A}_{1r}},\cdots, \underbrace{E_{(r-1)r}^{1},\cdots, E_{(r-1)r}^{d}}_{\mathscr{A}_{(r-1)r}}.
\end{align}
where $\{H_1, H_2, \cdots, H_r\}$ is a Jordan frame of $\mathscr{A}$, and $\{E_{ij}^1, \cdots, E_{ij}^d\}$ is an orthonormal basis of $(\mathscr{A}_{ij}, \langle\cdot, \cdot\rangle_0)$ for  $1\leq i<j\leq r$. By Theorem~\ref{Peirce}, we know that (\ref{E-basis}) is an orthonormal basis of $(\mathscr{A},\langle\cdot, \cdot\rangle_0)$. With respect to this basis, it is easy to see the Jordan Ricci tensor is represented by
\begin{align*}
\textnormal{Ric}_0=-\frac{rd}{16}I+\frac{d}{16}\left(\begin{array}{cccc|c}
1&1&\cdots & 1 & \textbf{0} \\
1&1&\cdots & 1 & \textbf{0} \\
\vdots & \vdots & \ddots & \vdots & \vdots \\
1 & 1 & \cdots & 1 & \textbf{0} \\
\hline
\textbf{0}&\textbf{0}&\cdots&\textbf{0}&\textbf{0}
\end{array}\right),
\end{align*}
where the square matrix $(\textbf{1})$ is of order $r.$

\textbf{Step two:} Let $\{F_1, F_2, \cdots, F_r\}$ be an orthonormal basis of $\textnormal{span}\{E_1, E_2, \cdots, E_r\}$, such that $F_1$ is a positive constant multiple of the identity element $E$. Then it follows that
\begin{align}\label{F-basis}
F_1,F_2,\cdots, F_r, \underbrace{E_{12}^{1},\cdots, E_{12}^{d}}_{\mathscr{A}_{12}},\cdots,\underbrace{E_{1r}^{1},\cdots, E_{1r}^{d}}_{\mathscr{A}_{1r}},\cdots, \underbrace{E_{(r-1)r}^{1},\cdots, E_{(r-1)r}^{d}}_{\mathscr{A}_{(r-1)r}},
\end{align}
is an orthonormal basis of $(\mathscr{A},\langle\cdot, \cdot\rangle_0)$. Since $\langle E, E\rangle_0=\textnormal{tr}(E)=r$, we conclude that
$F_1=\frac{1}{\sqrt{r}}E$. Moreover, $F_1, F_2, \cdots, F_r$ being orthogonal implies
\begin{align*}
\operatorname{tr}(F_2)=\cdots=\operatorname{tr}(F_r)=0.
\end{align*}
Hence relative to the basis (\ref{F-basis}),  the Jordan Ricci operator is represented by
\begin{align*}
\textnormal{Ric}_0=-\frac{r d}{16} I+\frac{d}{16}
\left(\begin{array}{cccc|c}
r & 0 & \cdots & 0 & \textbf{0} \\
0 & 0 & \cdots & 0 & \textbf{0}\\
\vdots & \vdots & \ddots & \vdots & \vdots \\
0 & 0 & \cdots & 0 & \textbf{0} \\
\hline \textbf{0} & \textbf{0} & \cdots & \textbf{0} & \textbf{0}
\end{array}\right).
\end{align*}

\textbf{Step three:} With respect to the basis (\ref{F-basis}), we define  a one-parameter subgroup $g_t \in \textnormal{GL}(n)$
\begin{align*}
g_t:=\left(\begin{array}{cccc}
e^{-t} & & & \\
& 1 & & \\
& & \ddots & \\
& & & 1
\end{array}\right), ~~t \in \mathbb{R},
\end{align*}
and  a family  of metrics
\begin{align}\label{t-metric}
\langle\cdot,\cdot\rangle_t:=\langle g_t^{-1}(\cdot), g_t^{-1}(\cdot)\rangle_0, ~~t \in \mathbb{R}.
\end{align}
Then it is easy to see that
\begin{align}\label{G-basis}
G_1=e^{-t}F_1,G_2=F_2,\cdots, G_r=F_r, \underbrace{E_{12}^{1},\cdots, E_{12}^{d}}_{\mathscr{A}_{12}},\cdots,\underbrace{E_{1r}^{1},\cdots, E_{1r}^{d}}_{\mathscr{A}_{1r}},\cdots, \underbrace{E_{(r-1)r}^{1},\cdots, E_{(r-1)r}^{d}}_{\mathscr{A}_{(r-1)r}},
\end{align}
is an orthonormal basis of $(\mathscr{A},\langle\cdot, \cdot\rangle_t)$ for any $t \in \mathbb{R}$.

\begin{Lemma}\label{Rict}
 Let the notations be as above. For  metric Jordan algebra $(\mathscr{A},\langle\cdot, \cdot\rangle_t)$, the mean curvature vector is given by
\begin{align*}
H_t=\frac{e^{-2 t}+n-1}{r}E=\frac{e^{-2t}+n-1}{\sqrt{r}}F_1,
\end{align*}
Moreover, with respect to the orthonormal basis \textnormal{(\ref{G-basis})}, we have
\begin{align*}
\textnormal{M}_t=\frac{1}{4}\left(\begin{array}{cccc}
-\frac{2n-1}{r}e^{-2t}+\frac{n-1}{r} e^{2t} & & & \\
& -\frac{2}{r}e^{2t}+c & & \\
& & \ddots&  \\
& & &-\frac{2}{r}e^{2t}+c
\end{array}\right),
\end{align*}
where $c=\frac{2}{r}-1-\frac{(r-2)}{4}d$.
\begin{align*}
\frac{1}{2}B=\left(\begin{array}{cccc}
\frac{1}{2}\left(1+\frac{(r-1)}{2} d\right) e^{-2t} & & & \\
& \frac{1}{2}\left(1+\frac{(r-2)}{4}d\right) & & \\
& & \ddots & \\
& & & \frac{1}{2}\left(1+\frac{(r-2)}{4}d\right)
\end{array}\right),
\end{align*}
and
\begin{align*}
-\frac{1}{4}(\langle H_t,\cdot\rangle_t)=
\left(\begin{array}{cccc}
-\frac{e^{-2t}+n-1}{4r} & & & \\
& -\frac{e^{-2t}+n-1}{4r} e^{2t} & & \\
& & \ddots & \\
& & & -\frac{e^{-2t}+n-1}{4r} e^{2t}
\end{array}\right).
\end{align*}
\end{Lemma}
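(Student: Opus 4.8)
The plan is to compute the Ricci operator of $(\mathscr{A},\langle\cdot,\cdot\rangle_t)$ through its three constituents from Remark~\ref{operator}, namely $\textnormal{Ric}_t=\textnormal{M}_t+\frac12 B-\frac14 S_{H_t}$, evaluating each summand as a matrix in the orthonormal basis (\ref{G-basis}). The single tool that makes every piece tractable is an explicit description of the deformed metric: since $g_t$ only rescales the $F_1$-axis and $F_1=\frac1{\sqrt r}E$, one reads off from (\ref{t-metric}) that
\[
\langle U,V\rangle_t=\langle U,V\rangle_0+(e^{2t}-1)\langle U,F_1\rangle_0\langle V,F_1\rangle_0,\qquad\forall U,V\in\mathscr{A}.
\]
In particular $\langle\cdot,\cdot\rangle_t$ agrees with the associative metric $\langle\cdot,\cdot\rangle_0$ on $W:=\operatorname{span}\{F_2,\dots,F_r,E^k_{ij}\}$, while on $\mathbb{R}F_1$ it is scaled by $e^{2t}$. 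Combined with the identity-element relation $EX=X$ (so $L_{F_1}=\frac1{\sqrt r}\,\mathrm{Id}$) and $\langle X,Y\rangle_0=\operatorname{tr}(XY)$, this reduces all inner products of Jordan products to data on $(\mathscr{A},\langle\cdot,\cdot\rangle_0)$.

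First I would dispatch $H_t$. Writing $H_t=\sum_i G_i^2=H_0+(e^{-2t}-1)F_1^2$ and using $F_1^2=\frac1r E$ together with $H_0=\frac nr E$ from Lemma~\ref{Ric0} gives $H_t=\frac{e^{-2t}+n-1}{r}E$ immediately. Next, the Killing operator $B$ is metric-independent, so by the trace formula (\ref{KilTr}) and $\operatorname{tr}(X)=\sqrt r\langle X,F_1\rangle_0$ one gets $B(X,Y)=\alpha\langle X,Y\rangle_0+\frac{dr}{4}\langle X,F_1\rangle_0\langle Y,F_1\rangle_0$; reading off the entries $\frac12 B(G_a,G_b)$ in the basis (\ref{G-basis}) (recall $G_1=e^{-t}F_1$) yields the stated diagonal matrix, the $F_1$-entry carrying the factor $e^{-2t}$ and every $W$-entry equal to $\frac\alpha2=\frac12(1+\frac{(r-2)}4 d)$. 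The term $-\frac14 S_{H_t}$ is equally direct: since $H_t$ is a multiple of $E$, the metric formula above and associativity of $\langle\cdot,\cdot\rangle_0$ give $\langle S_{H_t}X,Y\rangle_t=\frac{e^{-2t}+n-1}{r}\,e^{2t}\langle X,Y\rangle_0$, and evaluating on the basis produces the claimed matrix.

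The main work, and the expected obstacle, is the moment-map piece $\textnormal{M}_t$. Here I would expand $\langle\textnormal{M}_tX,Y\rangle_t=-\frac12\sum_{i,j}\langle XG_i,G_j\rangle_t\langle YG_i,G_j\rangle_t+\frac14\sum_{i,j}\langle G_iG_j,X\rangle_t\langle G_iG_j,Y\rangle_t$ (Theorem~\ref{RF}, Lemma~\ref{M-formu}) and split every index into the distinguished direction $G_1=e^{-t}F_1$ and a $W$-index. All products touching $G_1$ are governed by $L_{F_1}=\frac1{\sqrt r}\mathrm{Id}$ and contribute only explicit powers of $e^{\pm t}$; the genuine difficulty sits in the pure $W$-$W$ terms $\sum_{b,c}\langle W_aW_b,W_c\rangle_0^2$, which encode the full Peirce multiplication. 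The key reduction is that, writing $W_aW_b=\frac{\delta_{ab}}{\sqrt r}F_1+P_W(W_aW_b)$, these sums collapse to $\operatorname{Tr}L_{W_a}^2-\frac2r=B(W_a,W_a)-\frac2r$, and then (\ref{KilTr}) forces $B(W_a,W_a)=\alpha$ for every basis vector of $W$, whether it lies in a diagonal Peirce part $\mathbb{R}H_i$ or an off-diagonal block $\mathscr{A}_{ij}$. This $a$-independence is precisely what makes the $W$-block of $\textnormal{M}_t$ scalar; together with the routine but careful bookkeeping of the $G_1$-contributions it yields the $(1,1)$-entry $\frac14(-\frac{2n-1}r e^{-2t}+\frac{n-1}r e^{2t})$ and the uniform $W$-entry $\frac14(-\frac2r e^{2t}+c)$ with $c=\frac2r-\alpha$. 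Finally I would record that all off-diagonal entries of the three operators vanish, between $F_1$ and $W$ by the orthogonality $\langle W_a,F_1\rangle_0=0$ and within $W$ by the Peirce orthogonality relations, so that each operator is genuinely diagonal as displayed. The hardest point is thus not any single identity but the organized reduction of the $\textnormal{M}_t$ sums to the trace form $B$, since this is the only step that needs the detailed structure constants rather than merely associativity and the presence of an identity.
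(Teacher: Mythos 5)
Your proposal is correct, and for three of the four pieces ($H_t$, $\frac12 B$, $-\frac14 S_{H_t}$) it runs along the same lines as the paper: compute $H_t=e^{-2t}F_1^2+(H_0-F_1^2)$ from the $t$-orthonormal basis, use associativity of $\langle\cdot,\cdot\rangle_0$ and $H_t\in\mathbb{R}E$ to get $\langle H_t,XY\rangle_t=\frac{e^{-2t}+n-1}{r}e^{2t}\langle X,Y\rangle_0$, and read off $B$ from the trace formula (\ref{KilTr}). The genuine difference is the moment-map block: the paper does not prove this part at all, saying only that ``it follows from a similar discussion as in \cite{GKM}'', whereas you supply an actual computation. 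Your key reduction is correct and I checked it closes: since $\langle W_aW_b,F_1\rangle_0=\frac{\delta_{ab}}{\sqrt r}$ and $L_{W_a}$ is $\langle\cdot,\cdot\rangle_0$-symmetric, the pure $W$-sums satisfy
\begin{align*}
\sum_{b,c}\langle W_aW_b,W_c\rangle_0^2=\operatorname{Tr}L_{W_a}^2-\tfrac{2}{r}=B(W_a,W_a)-\tfrac{2}{r}=\alpha-\tfrac{2}{r},
\end{align*}
and combining this with the $G_1$-contributions (all controlled by $L_{F_1}=\frac{1}{\sqrt r}\,\mathrm{Id}$) gives exactly $\frac14\bigl(-\frac{2n-1}{r}e^{-2t}+\frac{n-1}{r}e^{2t}\bigr)$ in the $(1,1)$-slot and $\frac14\bigl(-\frac{2}{r}e^{2t}+\frac{2}{r}-\alpha\bigr)$ on the $W$-block, matching the Lemma. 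So your write-up is in this respect more self-contained than the paper's. One small imprecision: the vanishing of the off-diagonal entries of $\textnormal{M}_t$ within $W$ is not really a consequence of ``Peirce orthogonality'' (the $F_j$, $j\ge2$, are linear combinations of the $H_i$, not single Peirce components); it follows from the same trace-form reduction you already set up, namely the mixed sums collapse to $B(W_a,W_b)=\alpha\langle W_a,W_b\rangle_0=0$ for $a\neq b$. With that phrase replaced by the correct mechanism, the argument is complete.
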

\begin{proof}
Proof. Since (\ref{G-basis}) is an orthonormal basis of $(\mathscr{A},\langle\cdot, \cdot\rangle_t)$, then
\begin{align*}
H_t& =\sum_{i=1}^r G_i^2+\sum_{i<j}\sum_{k=1}^d E_{ij}^k \\
&=G_1^2+\sum_{j=2}^rG_j^2+\sum_{i<j}\sum_{k=1}^d E_{ij}^k \\
&=e^{-2t}F_1^2+\sum_{j=2}^r F_i^2+\sum_{i<j}\sum_{k=1}^d E_{ij}^k \\
&=e^{-2t}F_1^2+(H_0-F_1^2) \\
&=e^{-2t}\frac{1}{r}E+\left(\frac{n}{r} E-\frac{1}{r}E\right) \\
&=\frac{e^{-2t}+n-1}{r}E \\
&=\frac{e^{-2t}+n-1}{\sqrt{r}}F_1.
\end{align*}
It follows that
\begin{align*}
\langle H_t, XY\rangle_t=\langle\frac{e^{-2t}+n-1}{\sqrt{r}}F_1, XY\rangle_t=e^{2t}\langle\frac{e^{-2 t}+n-1}{\sqrt{r}} F_1, X Y\rangle_0=\frac{e^{-2t}+n-1}{r} e^{2t}\langle E, XY\rangle_0=\frac{e^{-2t}+n-1}{r}e^{2t}\langle X, Y\rangle_0,
\end{align*}
for any $X, Y \in \mathscr{A}$. In particular,
\begin{align*}
\langle H_t, G_1G_1\rangle_t& =\frac{e^{-2t}+n-1}{r}e^{2t}\langle G_1, G_1\rangle_0=\frac{e^{-2 t}+n-1}{r}. \\
\langle H_t, G_jG_j\rangle_t& =\frac{e^{-2t}+n-1}{r}e^{2t}\langle F_j, F_j\rangle_0=\frac{e^{-2t}+n-1}{r}e^{2t}, ~j \geq 2. \\
\langle H_t, G_iG_j\rangle_t& =0, ~i \neq j. \\
\langle H_t, E_{ij}^p E_{kl}^q\rangle_t&=\delta_{pq}\delta_{ik}\delta_{jl}\frac{e^{-2t}+n-1}{r}e^{2t}.
\end{align*}
Moreover
\begin{align*}
B(G_1, G_1)&=B(e^{-t}F_1, e^{-t}F_1)=\left(1+\frac{(r-1)}{2}d\right)e^{-2t}. \\
B(G_j, G_j)&=B(F_j, F_j)=1+\frac{(r-2)}{4}d, ~j \geq 2. \\
B(G_i, G_j)&=B(F_i, F_j)=0, ~i \neq j. \\
B(E_{ij}^p, E_{kl}^q) & =\delta_{pq}\delta_{ik}\delta_{jl}\left(1+\frac{(r-2)}{4}d\right).
\end{align*}
For the calculation of $\textnormal{M}_t$, it follows from a similar discussion as in \cite{GKM}. Note that $\textnormal{M}_t$ differs from the moment map in $\textnormal{M}$ in (\ref{Mf}) by a constant multiple of $\frac{1}{4}$. This completes the proof of the Lemma.
\end{proof}

Now, we are in a position to prove Theorem~\ref{JEM}.
\begin{proof} [\bf The proof of Theorem~\ref{JEM}] By Lemma~\ref{Rict}, we know that the Jordan Ricci tensor $\operatorname{Ric}_t$ of $(\mathscr{A},\langle\cdot,\rangle_t)$ is given by
\begin{align*}
\textnormal{Ric}_t(X, Y)=\langle\textnormal{M}_tX, Y\rangle_t+\frac{1}{2}B(X, Y)-\frac{1}{4}\langle H_t, XY\rangle_t, ~~\forall X, Y \in \mathscr{A}.
\end{align*}
To prove the theorem, it suffices to show that there exists some $t \in \mathbb{R}$ such that the following two functions
\begin{align}\label{LSH}
\frac{1}{4}\left(-\frac{2n-1}{r} e^{-2t}+\frac{n-1}{r} e^{2t}\right)+\frac{1}{2}\left(1+\frac{(r-1)}{2}d\right)e^{-2t}-\frac{e^{-2t}+n-1}{4r},
\end{align}
and
\begin{align}\label{RSH}
\frac{1}{4}\left(-\frac{2}{r}e^{2t}+\frac{2}{r}-1-\frac{(r-2)}{4}d\right)+\frac{1}{2}\left(1+\frac{(r-2)}{4}d\right)-\frac{e^{-2 t}+n-1}{4r} e^{2t}.
\end{align}
have the same value. The term of (\ref{LSH}) equals to
\begin{align*}
\left(-\frac{n}{2r}+\frac{1}{2}+\frac{(r-1)}{4}d\right)e^{-2 t}+\frac{(n-1)}{4r}e^{2t}-\frac{(n-1)}{4r}=\frac{(n-1)}{4r}e^{2 t}-\frac{(n-1)}{4r},
\end{align*}
and  the term of (\ref{RSH}) equals to
\begin{align*}
-\frac{(n+1)}{4r}e^{2t}+\frac{1}{4r}+\frac{1}{4}+\frac{(r-2)}{16}d.
\end{align*}
The solution of
\begin{align*}
\frac{(n-1)}{4r}e^{2t}-\frac{(n-1)}{4r}=-\frac{(n+1)}{4r} e^{2 t}+\frac{1}{4r}+\frac{1}{4}+\frac{(r-2)}{16}d
\end{align*}
is
\begin{align*}
t &=\frac{1}{2} \ln \left(\frac{1}{2}+\frac{r}{2n}\left(1+\frac{(r-2)}{4} d\right)\right) \\
& =\frac{1}{2} \ln \left(1-\frac{r d}{8+4(r-1) d}\right) \\
& <0,
\end{align*}
for $r\geq 2$ by Remark~\ref{classification}.
This completes the theorem.
\end{proof}

We exhibit  a specific example as follows
\begin{exam}
Consider the  formally real Jordan algebra  $\operatorname{Sym}(2,\mathbb{R})$, i.e., the space of  $2\times 2$  real symmetric matrices   with the usual Jordan multiplication. It is easy  to see that
\begin{align*}
e_1=\left(\begin{array}{cc}
1 &0 \\
0&0
\end{array}\right),\quad
e_2=\left(\begin{array}{cc}
0 &0 \\
0&1
\end{array}\right),\quad
e_3=\left(\begin{array}{cc}
0 &1 \\
1&0
\end{array}\right),
\end{align*}
is a basis of $\operatorname{Sym}(2,\mathbb{R})$. The Jordan multiplication table is given as follows
\begin{align*}
\left(\begin{array}{ccc}
e_1e_1 &e_1e_2 &e_1e_3  \\
e_2e_1 &e_2e_2 &e_2e_3  \\
e_3e_1 &e_3e_2 &e_3e_3
\end{array}\right)=
\left(\begin{array}{ccc}
e_1 &0 &\frac{1}{2}e_3  \\
0 &e_2 &\frac{1}{2}e_3  \\
\frac{1}{2}e_3  &\frac{1}{2}e_3  &e_1+e_2
\end{array}\right)
\end{align*}
The Jordan algebra  $\operatorname{Sym}(2,\mathbb{R})$ has $d=1$,  rank $2$ and  $\{e_1,e_2\}$ as a Jordan frame.   Consider the associative inner product
\begin{align*}
\langle X, Y\rangle_0:=\operatorname{tr}(XY)=\frac{2}{3} \operatorname{Tr}L_{XY}, ~~\forall X, Y\in\operatorname{Sym}(2,\mathbb{R}).
\end{align*}
From step one above, we get an  orthonormal basis of $(\operatorname{Sym}(2,\mathbb{R}),\langle \cdot, \cdot\rangle_0)$, i.e.,
\begin{align*}
E_1=e_1,\quad E_2=e_2,\quad E_{12}=\frac{1}{\sqrt{2}}e_3.
\end{align*}
From step two above, we get another orthonormal basis of $(\operatorname{Sym}(2,\mathbb{R}),\langle \cdot, \cdot\rangle_0)$, i.e.,
\begin{align*}
F_1=\frac{1}{\sqrt{2}}E,\quad F_2=\frac{1}{\sqrt{2}}E_1-\frac{1}{\sqrt{2}}E_2,\quad E_{12}.
\end{align*}
From step three above, we know that
\begin{align*}
G_1=e^{-t}F_1,\quad G_2=F_2,\quad E_{12},
\end{align*}
is an orthonormal basis of $(\operatorname{Sym}(2,\mathbb{R}),\langle \cdot, \cdot\rangle_t).$
The Jordan multiplication table is
\begin{align*}
\left(\begin{array}{ccc}
G_1G_1 &G_1G_2 &G_1E_{12}  \\
G_2G_1 &G_2G_2 &G_2E_{12}  \\
E_{12}G_1 &E_{12}G_2 &E_{12}E_{12}
\end{array}\right)=
\left(\begin{array}{ccc}
\frac{1}{\sqrt{2}}e^{-t}G_1 &\frac{1}{\sqrt{2}}e^{-t}G_2 &\frac{1}{\sqrt{2}}e^{-t}E_{12} \\
\frac{1}{\sqrt{2}}e^{-t}G_2&\frac{1}{\sqrt{2}}e^{t}G_1&0  \\
\frac{1}{\sqrt{2}}e^{-t}E_{12} &0 &\frac{1}{\sqrt{2}}e^{t}G_1
\end{array}\right).
\end{align*}
With respect to the orthonormal basis $\{G_1, G_2, E_{12}\}$, we have
\begin{align*}
\textnormal{M}_t=\frac{1}{4}\left(\begin{array}{cccc}
-\frac{5}{2}e^{-2t}+ e^{2t} &  & \\
& -e^{2t}  & \\
& & -e^{2t}
\end{array}\right),
\end{align*}
and
\begin{align*}
\frac{1}{2}B=\left(\begin{array}{cccc}
\frac{3}{4}e^{-2t} &  & \\
&\frac{1}{2}& \\
& &  \frac{1}{2}
\end{array}\right).
\end{align*}
Since $H_t=G_1^2+G_2^2+E_{12}^2=\frac{e^{-t}+2e^{t}}{\sqrt{2}}G_1$, then
\begin{align*}
-\frac{1}{4}(\langle H_t,\cdot\rangle_t)=
\left(\begin{array}{cccc}
-\frac{e^{-2t}+2}{8} &  & \\
& -\frac{e^{-2t}+2}{8} e^{2t}  & \\
&  & -\frac{e^{-2t}+2}{8} e^{2t}
\end{array}\right).
\end{align*}
It follows that $(\mathscr{A},\langle\cdot,\rangle_t)$ is Jordan-Einstein if and only if
\begin{align*}
\frac{1}{4}\left(-\frac{5}{2} e^{-2t}+ e^{2t}\right)+\frac{3}{4}e^{-2t}-\frac{e^{-2t}+2}{8}=-\frac{1}{4}e^{2t}+\frac{1}{2}-\frac{e^{-2t}+2}{8}e^{2t}.
\end{align*}
It is a straightforward calculation  to see $t=\frac{1}{2}\ln(\frac{5}{6}).$ So the  formally real Jordan algebra  $\operatorname{Sym}(2,\mathbb{R})$ admits a Jordan-Einstein metric.
\end{exam}

\begin{remark}
Let $\mathscr{A}$ be a simple formally real Jordan algebra of rank $\geq 2$. Then the Jordan-Einstein metric constructed in the proof of Theorem~\ref{JEM} has negative scalar curvature.
\end{remark}

\begin{Corollary}\label{Corofss}
Formally real Jordan algebras  containing no one-dimensional simple ideal admit a Jordan-Einstein metric of negative scalar curvature.
\end{Corollary}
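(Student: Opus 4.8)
The plan is to reduce the statement to the simple case already settled by Theorem~\ref{JEM}, and then to assemble the summands by a scaling argument. First I would invoke the structure theory (Theorem~\ref{FRJSS}) to write a formally real Jordan algebra $\mathscr{A}$ as an orthogonal direct sum of simple ideals $\mathscr{A}=\mathscr{A}_1\oplus\cdots\oplus\mathscr{A}_k$, with $\mathscr{A}_i\mathscr{A}_j=0$ for $i\neq j$. The hypothesis that $\mathscr{A}$ contains no one-dimensional simple ideal means precisely that every $\mathscr{A}_i$ has dimension at least two; since the only rank-one simple formally real Jordan algebra is the one-dimensional one (if $\dim\mathbb{R}[Y]=1$ for every $Y$, the spectral decomposition collapses $\mathscr{A}$ onto $\mathbb{R}E$), each $\mathscr{A}_i$ has rank $\geq 2$. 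By Theorem~\ref{JEM} together with the preceding remark, each $\mathscr{A}_i$ then carries a Jordan-Einstein metric $g_i$ with $\operatorname{Ric}_i=\lambda_i g_i$ of negative Jordan scalar curvature, i.e. $\lambda_i<0$.

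The two facts I would isolate are a scaling invariance and a splitting property, both immediate from the Koszul-type formula of Theorem~\ref{unique}. For the first: if the inner product is rescaled $\langle\cdot,\cdot\rangle\mapsto c\langle\cdot,\cdot\rangle$ with $c>0$, then both sides of the formula in Theorem~\ref{unique} scale by $c$, so the Jordan-Levi-Civita connection $\nabla$ is unchanged; hence $R$, $\operatorname{ric}$ and $\operatorname{Ric}$ are unchanged as tensors, and an Einstein metric with constant $\lambda$ becomes Einstein with constant $\lambda/c$. For the second: when the metric on $\mathscr{A}=\bigoplus_i\mathscr{A}_i$ is the orthogonal sum of metrics on the ideals, the Koszul formula shows $\nabla_XY\in\mathscr{A}_i$ for $X,Y\in\mathscr{A}_i$ and $\nabla_XY=0$ whenever $X,Y$ lie in distinct summands (each product term $XY$, $YZ$, $ZX$ appearing in the formula either vanishes or is orthogonal to the remaining factor). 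Consequently $R$, $\operatorname{ric}$ and $\operatorname{Ric}$ are block diagonal: $\operatorname{Ric}(\mathscr{A}_i,\mathscr{A}_j)=0$ for $i\neq j$, while $\operatorname{Ric}|_{\mathscr{A}_i}$ equals the Jordan Ricci tensor of $(\mathscr{A}_i,g_i)$.

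With these in hand, I would fix any $\lambda<0$ and set $c_i:=\lambda_i/\lambda>0$, then equip $\mathscr{A}$ with the orthogonal direct sum metric $g:=\bigoplus_i c_i g_i$. By the scaling fact each summand becomes Einstein with the common constant $\lambda$, and by the splitting fact the global Jordan Ricci tensor is $\operatorname{Ric}=\lambda g$, so $g$ is a Jordan-Einstein metric. Since $\operatorname{sc}=\operatorname{Tr}_{g}\operatorname{Ric}=\lambda\dim\mathscr{A}<0$ by Definition~\ref{JSC}, this metric has negative Jordan scalar curvature, which is the assertion.

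The main obstacle is not any single hard estimate but making the block-splitting precise: one must verify carefully from the Koszul formula that the connection, and therefore the curvature and the non-symmetrized $\operatorname{ric}$ entering $\operatorname{Ric}$, really is block diagonal with respect to an orthogonal ideal decomposition, so that no cross terms survive in the trace defining $\operatorname{ric}(U,V)$. Once this is checked, the matching of Einstein constants across the simple blocks is immediate from scaling invariance, precisely because the remark after Theorem~\ref{JEM} guarantees that all the $\lambda_i$ share the same (negative) sign, so the scaling factors $c_i$ are positive.
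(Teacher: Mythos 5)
Your proposal is correct and follows exactly the route the paper intends: the paper states this corollary without proof, immediately after Theorem~\ref{JEM} and the remark that the constructed metric on each simple factor of rank $\geq 2$ has negative scalar curvature, so the expected argument is precisely your decomposition via Theorem~\ref{FRJSS} into simple ideals of rank $\geq 2$, followed by rescaling each factor to a common negative Einstein constant. Your write-up supplies the two details the paper leaves implicit (scale-invariance of $\nabla$ under $\langle\cdot,\cdot\rangle\mapsto c\langle\cdot,\cdot\rangle$, and block-diagonality of $\operatorname{ric}$ and $\operatorname{Ric}$ for an orthogonal sum of ideals with $\mathscr{A}_i\mathscr{A}_j=0$), and both are verified correctly from the Koszul formula of Theorem~\ref{unique}.
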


\begin{exam}\label{Ric0+}
In the two dimension case, there are precisely two real semisimple Jordan algebras, i.e.,
$$
\begin{aligned}
& \psi_1: e_1 e_1=e_1, ~e_2 e_2=e_2. \\
& \psi_2: e_1 e_1=e_1, ~e_2 e_2=-e_1, ~e_1 e_2=e_2.
\end{aligned}
$$
The semisimple Jordan algebras $\psi_1$ and $\psi_2$ have the same complexification. Moreover, $\psi_1$ is formally real, and  $\psi_2$ is simple. Endow $\psi_1$ and $\psi_2$ with the metric $\langle\cdot, \cdot\rangle$ so that $\left\{e_1, e_2\right\}$ is an orthonormal basis. Then by Theorem~\ref{RF} and a straightforward calculation, we have
\begin{align*}
\textnormal{Ric}_{\psi_1}=0, \quad \textnormal{Ric}_{\psi_2}=\frac{1}{4}\langle\cdot, \cdot\rangle.
\end{align*}
That is, $\left(\psi_1,\langle\cdot, \cdot\rangle\right)$ and $\left(\psi_2,\langle\cdot, \cdot\rangle\right)$ are both Jordan-Einstein.
\end{exam}

\subsection{Nilpotent Jordan algebras admit no Jordan-Einstein metrics }
Let $(\mathscr{N},\langle\cdot, \cdot\rangle)$ be a nilpotent metric Jordan algebra. Since the Killing form of $\mathscr{N}$ necessarily vanishes, then by Theorem~\ref{RF} we have
\begin{align*}
\textnormal{Ric}(X, Y)=\langle\textnormal{M}X, Y\rangle-\frac{1}{4}\langle H, XY\rangle, ~~\forall X, Y \in \mathscr{N},
\end{align*}
where $H=\sum_{i=1}^n E_i^2$ is the mean curvature vector, $\{E_i\}$ is an orthonormal basis of $(\mathscr{N},\langle\cdot, \cdot\rangle)$ and
\begin{align*}
\langle\textnormal{M}X, Y\rangle=-\frac{1}{2}\sum_{i,j}\langle XE_i, E_j\rangle\langle YE_i, E_j\rangle+\frac{1}{4} \sum_{i,j}\langle E_iE_j, X\rangle\langle E_iE_j, Y\rangle, ~~\forall X, Y \in \mathscr{N} .
\end{align*}
It is clear that we  have
\begin{align*}
\operatorname{Tr} \textnormal{M}=-\frac{1}{4}\sum_{i,j,k}\langle E_iE_j, E_k\rangle^2 \leq 0,
\end{align*}
and the equality holds if and only $\mathscr{N}$ is the trivial algebra. Moreover, if $0\ne Z$ lies in the annihilator of $\mathscr{N}$, then
\begin{align*}
\langle\textnormal{M}Z, Z\rangle=\frac{1}{4}\sum_{i,j}\langle E_iE_j, Z\rangle\langle E_i E_j, Z\rangle \geq 0.
\end{align*}

\begin{theorem}\label{nilpotent}
A nontrivial nilpotent Jordan algebra admits no Jordan-Einstein metrics.
\end{theorem}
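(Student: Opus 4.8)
The plan is to argue by contradiction, exploiting the two sign facts isolated in the paragraph immediately preceding the statement. Suppose a nontrivial nilpotent Jordan algebra $\mathscr{N}$ admitted a Jordan-Einstein metric $\langle\cdot,\cdot\rangle$, so that $\textnormal{Ric}=c\langle\cdot,\cdot\rangle$ for some constant $c$. Since $\mathscr{N}$ is nilpotent its Killing form vanishes, and the Ricci tensor has already been reduced to
\begin{align*}
\textnormal{Ric}(X,Y)=\langle\textnormal{M}X,Y\rangle-\tfrac14\langle H,XY\rangle,\quad\forall X,Y\in\mathscr{N}.
\end{align*}
My first step is to pin down the sign of $c$ by taking a trace over an orthonormal basis $\{E_i\}$. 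Using $H=\sum_iE_i^2$, this yields $\textnormal{sc}=\operatorname{Tr}\textnormal{M}-\tfrac14\|H\|^2$, and the established identity $\operatorname{Tr}\textnormal{M}=-\tfrac14\sum_{i,j,k}\langle E_iE_j,E_k\rangle^2\le 0$, strict unless $\mathscr{N}$ is trivial, forces $\textnormal{sc}<0$. As $\textnormal{sc}=c\dim\mathscr{N}$ under the Einstein hypothesis, I conclude $c<0$.

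Second, I would produce a test vector on which $\textnormal{Ric}$ is forced to be nonnegative, namely a nonzero element of the annihilator $\operatorname{Ann}(\mathscr{N})=\{Z:ZX=0\ \forall X\in\mathscr{N}\}$. Since $\mathscr{N}$ is nontrivial nilpotent, choose $m$ minimal with $\mathscr{N}^m=0$; then $m\ge 2$ and $\mathscr{N}^{m-1}\ne 0$. For any $0\ne Z\in\mathscr{N}^{m-1}$ one has $\mathscr{N}Z\subseteq\mathscr{N}\mathscr{N}^{m-1}=\mathscr{N}^m=0$ directly from the power convention $\mathscr{N}^{k+1}=\mathscr{N}\mathscr{N}^k$, so $Z\in\operatorname{Ann}(\mathscr{N})$ and in particular $Z^2=0$. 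The second term of the Ricci formula therefore drops out, and by the inequality recorded before the theorem,
\begin{align*}
\textnormal{Ric}(Z,Z)=\langle\textnormal{M}Z,Z\rangle=\tfrac14\sum_{i,j}\langle E_iE_j,Z\rangle^2\ge 0.
\end{align*}

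The contradiction is then immediate: the Einstein condition gives $\textnormal{Ric}(Z,Z)=c\langle Z,Z\rangle<0$ because $c<0$ and $Z\ne 0$, while the previous step gives $\textnormal{Ric}(Z,Z)\ge 0$. Hence no Jordan-Einstein metric can exist, which is the claim.

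I expect the only genuinely delicate point to be the nonvanishing of the annihilator. One must invoke the structure of nilpotent Jordan algebras (equivalently their solvability, as noted after the definition of nilpotency) to know that the chain of powers terminates at a last nonzero stage, and to verify carefully, despite nonassociativity, that the power convention makes $\mathscr{N}Z$ land in $\mathscr{N}^m=0$. Everything past that is a routine assembly of the scalar-curvature sign and the annihilator inequality already supplied in the excerpt.
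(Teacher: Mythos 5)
Your proposal is correct and follows essentially the same route as the paper's proof: trace the Einstein equation to force $c<0$ via $\operatorname{Tr}\textnormal{M}<0$, then evaluate $\textnormal{Ric}$ on a nonzero annihilator element to force $c\geq 0$. The only difference is that you explicitly justify the nonvanishing of the annihilator by taking $Z$ in the last nonzero power $\mathscr{N}^{m-1}$, a detail the paper leaves implicit; this is a welcome addition, not a deviation.
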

\begin{proof}
Suppose to the contrary that $(\mathscr{N},\langle\cdot, \cdot\rangle)$ is a nontrivial nilpotent metric Jordan algebra with
\begin{align*}
\textnormal{Ric}=c\langle\cdot, \cdot\rangle,
\end{align*}
for some constant $c \in \mathbb{R}$. Since the Killing form $B$ of $\mathscr{N}$ vanishes, then
\begin{align*}
c\cdot\operatorname{dim}\mathscr{N}=\operatorname{Tr}\textnormal{Ric}=\operatorname{Tr} \textnormal{M}+\frac{1}{2} \operatorname{Tr}B
-\frac{1}{4}\langle H, H\rangle=\operatorname{Tr} \textnormal{M}-\frac{1}{4}\langle H, H\rangle<0.
\end{align*}
Consequently, $c<0$. On the the hand, let $0 \neq Z$ be an element lying in the annihilator of $\mathscr{N}$. Then
\begin{align*}
c\langle Z, Z\rangle=\textnormal{Ric}(Z, Z)=\langle\textnormal{M} Z, Z\rangle-\frac{1}{4}\langle H, Z^2\rangle=\langle\textnormal{M}Z, Z\rangle \geq 0.
\end{align*}
It follows that $c\geq0$, which is contradiction. So a nontrivial nilpotent Jordan algebra admits no Jordan-Einstein metrics.
This proves the theorem.
\end{proof}

\begin{exam}\label{semidirect}
Consider the $n$-dimensional Jordan algebra $\mathscr{A}$
\begin{align*}
e_1 e_1=e_1,~ e_1 e_2=e_2, ~\cdots, ~e_1 e_n=e_n.
\end{align*}
It is neither nilpotent nor semisimple.
Endow $\mathscr{A}$ with a metric $\langle\cdot, \cdot\rangle$ so that $\{e_1, e_2, \cdots, e_n\}$ is an orthonormal basis. Then by a straightforward calculation, we have
\begin{align*}
\textnormal{Ric}=0.
\end{align*}
That is, $\langle\cdot, \cdot\rangle$ is a flat Jordan-Einstein metric on $\mathscr{A}$.
\end{exam}

\section{Further study}\label{Summary}
For a metric Lie algebra $(\g,\langle\cdot, \cdot\rangle)$, there is an important notion called \textit{algebraic Ricci solitons}, i.e., the Ricci operator satisfies  $$\textnormal{Ric}=cI+D,~~~~\textnormal{for some}~ c\in \mathbb{R},~D \in \operatorname{Der}(\mathfrak{g}).$$
The notion arises as limits under Ricci flow can be naturally seen as a generalization of Einstein metric (\cite{Jablonski2014,Jablonski2015H,lauret2001}).
It is well-known that nilpotent Lie algebras admit no Einstein metrics,  but  many of them admit \textit{a nilsoliton metric} (i.e., algebraic Ricci soliton in the nilpotent case). This is also true for nilpotent Jordan algebras (see Theorem~\ref{nilpotent} and \cite{GKM}). For nilpotent Lie algebras, nilsolitons can be characterized by a solvable  Einstein extension, and we don't know whether  or not it holds for nilpotent metric Jordan algebras. Besides,  \textit{classifying Jordan-Einstein metrics on formally real Jordan algebras up to isometry and scaling}, is  of interest,  noting that the Lie version is still  open (see \cite{BWZ04}).

Generally speaking, the differential geometry of invariant metrics on Lie (super-)algebras is a classical topic, having important applications to homogeneous spaces. We try to build a similar theory for metric Jordan algebras in this paper, hoping to  provide a  way to  understand homogeneous (super-)geometry by this Jordan theory.




\begin{appendix}

\section{Curvatures of metric Lie algebras}\label{Appendix-Lie}
Let $G$ be a connected Lie group with the Lie algebra $\g$ consisting
of left-invariant vector fields, and $\langle\cdot,\cdot\rangle$ be a left-invariant Riemannian metric on $G$. It is well-known
that we may identify $(G,\langle\cdot,\cdot\rangle)$ with the metric Lie algebra $(\g,\langle\cdot,\cdot\rangle)$ (see \cite{besse87book,Heber}). Let $\nabla$ be the Levi-Civita connection associated with $(\g,\langle\cdot,\cdot\rangle)$ and $X, Y, Z, U, V \in \g$. Then the torsion-free property and the
metric-preserving property of $\nabla$ are respectively given by
\begin{align}
[X,Y]&=\nabla_XY-\nabla_YX, \label{tor-f}\\
\langle \nabla_XY, Z&\rangle+\langle Y,\nabla_XZ\rangle=0.\label{metric-skew}
\end{align}
By Koszul's formula, one knows that the Levi-Civita connection $\nabla$ of $(\g,\langle\cdot,\cdot\rangle)$ is uniquely determined by
the equations (\ref{tor-f}) and (\ref{metric-skew}), that is,
\begin{align*}
\langle \nabla_XY,&Z\rangle=\frac{1}{2}(\langle [X,Y],Z\rangle)-\langle [Y,Z],X\rangle)+\langle [Z,X],Y\rangle).
\end{align*}
Associated with the Levi-Civita connection $\nabla$, the Riemann curvature tensor of $(\g,\langle\cdot,\cdot\rangle)$ is defined by
\begin{align*}
R(X,Y)Z=-\nabla_X\nabla_YZ+\nabla_Y\nabla_XZ+\nabla_{[X,Y]}Z,
\end{align*}
or equivalently, $R(X,Y)=\nabla_{[X,Y]}-[\nabla_X,\nabla_Y]$. We remark that a frequently encountered definition of
Riemann curvature tensor in the literature differs from the above by a sign! The $(0,2)$-type Ricci tensor
\textnormal{Ric} of $(\g,\langle\cdot,\cdot\rangle)$  is a trace or contraction of $R$, i.e.,
\begin{align*}
\textnormal{Ric}=\operatorname{Tr} {(X\rightarrow R(U,X)V)}.
\end{align*}
By the fundamental symmetry properties of $R$, one knows that \textnormal{Ric} is a symmetric tensor. Furthermore, by taking the
trace of \textnormal{Ric}, we obtain the scalar curvature \textnormal{sc} of $(\g,\langle\cdot,\cdot\rangle)$, i.e.,
\begin{align*}
\textnormal{sc}=\operatorname{Tr}_{\langle\cdot,\cdot\rangle}\textnormal{Ric}.
\end{align*}
In the frame of  metric Lie algebra $(\g,\langle\cdot,\cdot\rangle)$, there is a \textit{mean curvature vector} $H\in\g$, which is defined by
\begin{align*}
\langle H, X\rangle=\operatorname{Tr}\operatorname{ad}X, ~\forall X\in\g.
\end{align*}
Note that the Lie algebra $\g$ is \textit{unimodular} if and only if $H=0$.

\begin{Lemma}\label{RicinOrth}
Let $\{E_i\}$ be an arbitrary orthonormal basis of $(\g,\langle\cdot,\cdot\rangle).$ Then
\begin{align*}
\textnormal{Ric}(X,Y)=-&\frac{1}{2}\sum_{i,j}\langle[X,E_i],E_j\rangle\langle[Y,E_i],E_j\rangle
+\frac{1}{4}\sum_{i,j}\langle[E_i,E_j],X\rangle\langle[E_i,E_j],Y\rangle\\
&-\frac{1}{2}B(X,Y)-\frac{1}{2}(\langle [H,X], Y\rangle+\langle X, [H,Y]\rangle), ~~\forall X,Y\in\g,
\end{align*}
where $B$ is the Killing form of $\g,$ and $H$ is the mean curvature vector of $(\g,\langle\cdot,\cdot\rangle)$.
\end{Lemma}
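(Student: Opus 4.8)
The plan is to use that $\textnormal{Ric}$ is symmetric, so it suffices to compute the quadratic form $\textnormal{Ric}(X,X)$ and to recover the stated bilinear formula by polarization. Fix an orthonormal basis $\{E_i\}$ and, mirroring the maps introduced before Lemma~\ref{JRTrF}, associate to each $U\in\g$ the two endomorphisms $\nabla_U\colon Y\mapsto\nabla_UY$ and $\nabla U\colon Y\mapsto\nabla_YU$. Writing $\textnormal{Ric}(X,X)=\sum_i\langle R(X,E_i)X,E_i\rangle$ and substituting $R(X,Y)Z=-\nabla_X\nabla_YZ+\nabla_Y\nabla_XZ+\nabla_{[X,Y]}Z$, I would rewrite the three resulting sums as operator traces, exactly as in the proof of Lemma~\ref{JRTrF}:
\[
\textnormal{Ric}(X,X)=-\operatorname{Tr}(\nabla_X\circ\nabla X)+\operatorname{Tr}\nabla(\nabla_XX)+\operatorname{Tr}((\nabla X)\circ\operatorname{ad}X),
\]
where $\operatorname{ad}X=[X,\cdot\,]$ and the third term uses $[X,E_i]=(\operatorname{ad}X)E_i$.

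I would then simplify each piece. The torsion-free identity (\ref{tor-f}) reads $\operatorname{ad}X=\nabla_X-\nabla X$ at the operator level, so $\operatorname{Tr}((\nabla X)\circ\operatorname{ad}X)=\operatorname{Tr}((\nabla X)\circ\nabla_X)-\operatorname{Tr}((\nabla X)^2)$; by cyclicity of the trace the term $\operatorname{Tr}((\nabla X)\circ\nabla_X)$ cancels against $-\operatorname{Tr}(\nabla_X\circ\nabla X)$, leaving only $-\operatorname{Tr}((\nabla X)^2)$. For the middle term, the metric-preserving property (\ref{metric-skew}) (the skew-symmetry of each $\nabla_{E_i}$) together with the identity $\sum_i\nabla_{E_i}E_i=H$, which follows in one line from Koszul's formula and the definition of $H$, gives $\operatorname{Tr}\nabla(\nabla_XX)=-\langle\nabla_XX,H\rangle=-\langle[H,X],X\rangle$. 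Thus $\textnormal{Ric}(X,X)=-\operatorname{Tr}((\nabla X)^2)-\langle[H,X],X\rangle$.

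It remains to expand $\operatorname{Tr}((\nabla X)^2)=\sum_{i,j}\langle\nabla_{E_i}X,E_j\rangle\langle\nabla_{E_j}X,E_i\rangle$ by inserting Koszul's formula into each factor, producing products of the structure constants $\langle[X,E_i],E_j\rangle$ and $\langle[E_i,E_j],X\rangle$. The crucial recognition is that the mixed sum $\sum_{i,j}\langle[X,E_i],E_j\rangle\langle[X,E_j],E_i\rangle$ equals $\operatorname{Tr}(\operatorname{ad}X)^2=B(X,X)$; after collecting coefficients one finds $-\operatorname{Tr}((\nabla X)^2)=-\tfrac12\sum_{i,j}\langle[X,E_i],E_j\rangle^2+\tfrac14\sum_{i,j}\langle[E_i,E_j],X\rangle^2-\tfrac12 B(X,X)$, i.e. the diagonal of the first two sums together with the Killing term (the first two sums being the operator $\textnormal{M}$ of Lemma~\ref{M-formu} up to the scalar $\tfrac14$). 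Hence $\textnormal{Ric}(X,X)$ equals this expression minus $\langle[H,X],X\rangle$, and polarizing (legitimate since $\textnormal{Ric}$ is symmetric and the right-hand side is the diagonal of an explicit symmetric bilinear form) turns $-\langle[H,X],X\rangle$ into $-\tfrac12(\langle[H,X],Y\rangle+\langle X,[H,Y]\rangle)$ and reproduces the statement. The main obstacle is the bookkeeping in this last expansion: one must cleanly separate the mixed product forming $B(X,X)$ from the diagonal squares and track the fractions $\tfrac14$ and $-\tfrac12$, taking care that $\nabla_X$ and $\nabla X$ do not commute, so that only the trace, and not the operators themselves, may be cyclically permuted. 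As this is a classical identity, one could alternatively cite \cite{besse87book,M76}, but the derivation above runs parallel to Theorem~\ref{RF} and is the natural route in the present framework.
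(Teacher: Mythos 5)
Your proof is correct, but there is nothing in the paper to compare it against line by line: Lemma~\ref{RicinOrth} is stated in Appendix~\ref{Appendix-Lie} without proof, as a classical fact quoted from the literature (cf.\ \cite{besse87book,M76}). What your argument does, in effect, is transplant to the Lie side the paper's own Jordan-side derivation (Lemma~\ref{JRTrF} together with Theorem~\ref{RF}), and every sign that changes under the transplant is handled correctly. With the paper's convention $R(X,Y)=\nabla_{[X,Y]}-[\nabla_X,\nabla_Y]$, your operator-trace identity $\textnormal{Ric}(X,X)=-\operatorname{Tr}(\nabla_X\circ\nabla X)+\operatorname{Tr}\nabla(\nabla_XX)+\operatorname{Tr}((\nabla X)\circ\operatorname{ad}X)$ is right; the reduction to $-\operatorname{Tr}((\nabla X)^2)$ via $\operatorname{ad}X=\nabla_X-\nabla X$ and cyclicity of the trace is the exact analogue of the treatment of the term $\mathrm{I}$ in the proof of Lemma~\ref{JRTrF}, where instead $E_iU=\nabla_{E_i}U+\nabla_UE_i$ produces $+\operatorname{Tr}(\nabla U)^2$; and your middle term correctly uses the skew-symmetry of each $\nabla_{E_i}$ from (\ref{metric-skew}) together with $\sum_i\nabla_{E_i}E_i=H$ (both verifiable in one line from Koszul and the definition $\langle H,X\rangle=\operatorname{Tr}\operatorname{ad}X$), whereas the Jordan proof uses symmetry of $\nabla_{E_i}$ and $\nabla_{E_i}E_i=\frac12E_i^2$ to get $+\frac12\langle U^2,H\rangle$. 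The final expansion also checks out: writing $a_{ij}=\langle[X,E_i],E_j\rangle$ and $c_{ij}=\langle[E_i,E_j],X\rangle$, Koszul gives $\langle\nabla_{E_i}X,E_j\rangle=-\frac12(a_{ij}+a_{ji}+c_{ij})$, hence $\operatorname{Tr}((\nabla X)^2)=\frac14\sum_{i,j}\bigl((a_{ij}+a_{ji})^2-c_{ij}^2\bigr)=\frac12\sum_{i,j}a_{ij}^2+\frac12B(X,X)-\frac14\sum_{i,j}c_{ij}^2$, using precisely your identification $\sum_{i,j}a_{ij}a_{ji}=\operatorname{Tr}(\operatorname{ad}X)^2=B(X,X)$; and since $\textnormal{Ric}$ is symmetric (as the appendix notes), polarization is legitimate and turns $-\langle[H,X],X\rangle$ into $-\frac12(\langle[H,X],Y\rangle+\langle X,[H,Y]\rangle)$, i.e.\ the term $-S(\operatorname{ad}H)$ of (\ref{Riccif}). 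The only step left implicit in your write-up is the equality $\langle\nabla_XX,H\rangle=\langle[H,X],X\rangle$, which is again a one-line Koszul computation; with that spelled out, your argument is a complete and self-contained proof of the lemma, which the paper itself only cites.
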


The Ricci operator Ric (by abuse of notation) of $(\g,\langle\cdot,\cdot\rangle)$ is defined by $\langle\textnormal{Ric}(\cdot),(\cdot)\rangle=\textnormal{Ric}(\cdot,\cdot).$ It follows  from Lemma~\ref{RicinOrth} that  the Ricci operator is given by
\begin{align}\label{Riccif}
\textnormal{Ric}=\textnormal{M}-\frac{1}{2}B-S(\operatorname{ad}H),
\end{align}
where $B$ denotes the symmetric map defined by the Killing form  relative to $\langle\cdot,\cdot\rangle,$
$S(\operatorname{ad}H)=\frac{1}{2}(\operatorname{ad}H+(\operatorname{ad}H)^t)$ and \textnormal{M} is the symmetric map defined by
\begin{align}\label{Mformula}
\langle\textnormal{M}X,Y\rangle=-\frac{1}{2}\sum_{i,j}\langle[X,E_i],E_j\rangle\langle[Y,E_i],E_j\rangle
+\frac{1}{4}\sum_{i,j}\langle[E_i,E_j],X\rangle\langle[E_i,E_j]
,Y\rangle,~~\forall X,Y\in\g.
\end{align}
Note that for a nilpotent metric Lie algebra $(\mathfrak{n},\langle\cdot,\cdot\rangle)$, the Ricci operator of $(\mathfrak{n},\langle\cdot,\cdot\rangle)$ coincides with $\textnormal{M}.$

\section{Formally real Jordan algebras}\label{frja}
\begin{Lemma}[\cite{FK1994}]\label{Fundamenta}
Let $\mathscr{A}$ be an arbitrary  Jordan algebra.  Then the following identities hold:
\begin{enumerate}
\item [\textnormal{(i)}] $[L_X, L_{Y^2}]+2[L_Y, L_{XY}]=0$,
\item [\textnormal{(ii)}] $[L_X, L_{YZ}]+[L_Y, L_{ZX}]+[L_Z, L_{XY}]=0$,
\item [\textnormal{(iii)}]  $L_{X^2Y}-L_{X^2}L_Y=2(L_{XY}-L_XL_Y)L_X$,
\end{enumerate}
for any $X, Y, Z \in \mathscr{A}$.
\end{Lemma}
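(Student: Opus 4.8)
The plan is to obtain all three identities by polarizing (linearizing) the single Jordan axiom, which in operator form reads $[L_X,L_{X^2}]=0$ and in element form $X(X^2Y)=X^2(XY)$, together with repeated use of commutativity. The mechanism is uniform throughout: replace one variable (say $X$ by $X+tW$, or $Y$ by $Y+Z$), expand in powers of the scalar $t$, and read off the coefficient of the relevant power. The two extreme powers merely reproduce the original axiom, while the intermediate coefficients yield the genuinely new identities. No single step is conceptually deep once this polarization scheme is set up; the work is bookkeeping.

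First I would derive (i). Substituting $X\mapsto X+tY$ in $[L_X,L_{X^2}]=0$ and using $(X+tY)^2=X^2+2t\,XY+t^2Y^2$ (by commutativity), so that $L_{(X+tY)^2}=L_{X^2}+2tL_{XY}+t^2L_{Y^2}$, the axiom becomes
\[
[\,L_X+tL_Y,\;L_{X^2}+2tL_{XY}+t^2L_{Y^2}\,]=0 .
\]
The coefficients of $t^0$ and $t^3$ recover $[L_X,L_{X^2}]=0$ and $[L_Y,L_{Y^2}]=0$, while the coefficient of $t^2$ is exactly $[L_X,L_{Y^2}]+2[L_Y,L_{XY}]=0$, which is (i).

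Next, (ii) follows by linearizing (i). Replacing $Y\mapsto Y+Z$ in (i) and expanding via $L_{(Y+Z)^2}=L_{Y^2}+2L_{YZ}+L_{Z^2}$ and $L_{X(Y+Z)}=L_{XY}+L_{XZ}$, the groups $[L_X,L_{Y^2}]+2[L_Y,L_{XY}]$ and $[L_X,L_{Z^2}]+2[L_Z,L_{XZ}]$ vanish by two instances of (i). What remains is $2\bigl([L_X,L_{YZ}]+[L_Y,L_{XZ}]+[L_Z,L_{XY}]\bigr)=0$, and rewriting $L_{XZ}=L_{ZX}$ by commutativity produces (ii).

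Finally, I would extract (iii) from the element form $X(X^2Y)=X^2(XY)$ by the directional substitution $X\mapsto X+tW$ (note that (iii) is \emph{not} a commutator identity, so unlike (i) it cannot come from the antisymmetrized form and must be read off the plain product form). Expanding both sides and collecting the coefficient of $t^1$ gives an element identity in $X,Y,W$; rewriting every product with commutativity (for instance $(XW)Y=Y(XW)$ and $X^2(WY)=X^2(YW)$) turns it into
\[
(X^2Y)W-X^2(YW)=2(XY)(XW)-2X\bigl(Y(XW)\bigr).
\]
Since applying the operator identity (iii) to an arbitrary $W$ yields precisely this element equation, and $W$ is arbitrary, (iii) holds as stated. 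The main obstacle is exactly this last translation: one must track the commutativity rewrites carefully so that the first-order term lands in the stated operator shape, and confirm that the $t^2$ coefficient of the same expansion is consistent but not needed for any of the three assertions.
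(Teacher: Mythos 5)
Your proof is correct. Note that the paper itself gives no proof of this lemma---it is quoted directly from Faraut--Kor\'{a}nyi \cite{FK1994}---and your argument is essentially the standard one found there: polarize $[L_X,L_{X^2}]=0$ via $X\mapsto X+tY$ and extract the $t^2$ coefficient for (i), linearize (i) in $Y$ for (ii), and linearize the element form $X(X^2Y)=X^2(XY)$ in $X$ and evaluate on an arbitrary element $W$ for (iii); all three coefficient extractions and the commutativity rewrites you describe check out.
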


A Jordan algebra $\mathscr{A}$ over $\mathbb{R}$ with an identity element $E$ is called a \textit{formally real Jordan algebra} if $X^2+Y^2=0 \Rightarrow X=0, Y=0$. This in particular implies that the mean curvature vector of   metric formally real Jordan algebras never vanishes (see Definition~\ref{MCV}).

The following proposition characterizes formally real Jordan algebras.
\begin{Proposition}[\cite{FK1994}]\label{FRJAC}
 Let $\mathscr{A}$ be a Jordan algebra over $\mathbb{R}$ with an identity element. The following statements are equivalent
 \begin{enumerate}
\item [\textnormal{(i)}] $\mathscr{A}$ is a formally real Jordan algebra.
\item [\textnormal{(ii)}] There exists a positive definite symmetric bilinear form on $\mathscr{A}$ which is associative.
\item [\textnormal{(iii)}] The symmetric bilinear form $\operatorname{tr}(XY)$ is positive definite.
\item [\textnormal{(iv)}] The symmetric bilinear form $\tau(X, Y)=\operatorname{Tr} L_{XY}$ is positive definite.
\end{enumerate}
\end{Proposition}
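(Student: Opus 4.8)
The plan is to prove the equivalence through the implications (iii)$\Rightarrow$(ii), (iv)$\Rightarrow$(ii), (ii)$\Rightarrow$(i), and finally (i)$\Rightarrow$(iv) together with (i)$\Rightarrow$(iii), the last step being where essentially all the work lies. The implications (iii)$\Rightarrow$(ii) and (iv)$\Rightarrow$(ii) are immediate: by Lemma~\ref{trass} and Lemma~\ref{tau} the forms $\operatorname{tr}(XY)$ and $\tau(X,Y)=\operatorname{Tr} L_{XY}$ are symmetric and associative, so if either is positive definite it already furnishes the form demanded in (ii). For (ii)$\Rightarrow$(i), I would take a positive definite associative symmetric form $\langle\cdot,\cdot\rangle$ and suppose $X^2+Y^2=0$. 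Using $XE=X$ and associativity gives $\langle X^2,E\rangle=\langle X,XE\rangle=\langle X,X\rangle$, and likewise for $Y$; pairing $X^2+Y^2=0$ against $E$ then yields $\langle X,X\rangle+\langle Y,Y\rangle=0$, whence $X=Y=0$.

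The heart of the argument is (i)$\Rightarrow$(iv). First I would establish a spectral decomposition of an arbitrary $X\in\mathscr{A}$. By power-associativity the subalgebra $\mathbb{R}[X]$ is commutative and associative. Taking $Y=0$ in the formal-reality condition gives $Z^2=0\Rightarrow Z=0$, so by repeatedly halving exponents any nilpotent element of $\mathbb{R}[X]$ must vanish; hence $\mathbb{R}[X]$ is a finite-dimensional commutative semisimple $\mathbb{R}$-algebra, i.e. a product of copies of $\mathbb{R}$ and $\mathbb{C}$. A $\mathbb{C}$-factor with unit idempotent $f$ and imaginary unit $j$ would satisfy $j^2+f^2=-f+f=0$ with $j\neq0$, contradicting formal reality, so no $\mathbb{C}$-factors occur. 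Therefore $\mathbb{R}[X]\cong\mathbb{R}^k$, producing orthogonal idempotents $c_1,\dots,c_k$ with $\sum_i c_i=E$, $c_ic_j=0$ for $i\neq j$, and $X=\sum_i\lambda_i c_i$ with $\lambda_i\in\mathbb{R}$, so that $X^2=\sum_i\lambda_i^2 c_i$.

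It then remains to see that $\operatorname{Tr} L_c>0$ for every nonzero idempotent $c$. Setting $X=Y=c$ in Lemma~\ref{Fundamenta}(iii) and using $c^2=c$ gives the identity $2L_c^3-3L_c^2+L_c=0$, so the eigenvalues of $L_c$ lie in $\{0,\tfrac12,1\}$; since $L_c(c)=c$, the eigenvalue $1$ actually occurs, and hence $\operatorname{Tr} L_c>0$. Consequently $\tau(X,X)=\operatorname{Tr} L_{X^2}=\sum_i\lambda_i^2\operatorname{Tr} L_{c_i}>0$ whenever $X\neq0$, which proves (iv). For (iii) I would run the same decomposition, invoking the standard fact that in a formally real Jordan algebra the reduced trace of a primitive idempotent equals $1$ (so that $\operatorname{tr}(c_i)>0$ for each nonzero $c_i$, using $\operatorname{tr}(E)=r$ and linearity of $\operatorname{tr}$); then $\operatorname{tr}(X^2)=\sum_i\lambda_i^2\operatorname{tr}(c_i)>0$ for $X\neq0$.

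The main obstacle is the spectral step: ruling out nilpotent and complex factors of $\mathbb{R}[X]$ purely from formal reality, which is what converts the abstract positivity hypotheses into a genuine diagonalization. By comparison, the eigenvalue identity $2L_c^3-3L_c^2+L_c=0$ is a clean algebraic input and the two "trivial" directions are immediate from the associativity already recorded in Lemmas~\ref{trass} and \ref{tau}. I expect the only delicate auxiliary point to be the positivity of the reduced trace on idempotents needed for (iii), which is standard but depends on the theory of the generic trace rather than on a one-line computation.
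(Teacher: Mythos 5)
Your proposal proves considerably more than the paper itself does: the paper attributes Proposition~\ref{FRJAC} to \cite{FK1994} and only records, in the text immediately following the statement, the implication (ii)$\Rightarrow$(i) --- which your argument reproduces exactly (pair $X^2+Y^2=0$ against $E$ and use associativity). The rest is your own reconstruction, and its core is sound. The cycle (iii)$\Rightarrow$(ii), (iv)$\Rightarrow$(ii), (ii)$\Rightarrow$(i), (i)$\Rightarrow$(iv) is complete as written: the two easy implications follow from Lemmas~\ref{trass} and~\ref{tau}, and your (i)$\Rightarrow$(iv) is a genuine, self-contained proof --- formal reality kills nilpotents in the associative algebra $\mathbb{R}[X]$ (via $Z^2=0\Rightarrow Z=0$ and halving exponents) and kills $\mathbb{C}$-factors (via $j^2+f^2=-f+f=0$), so $\mathbb{R}[X]\cong\mathbb{R}^k$ with orthogonal idempotents $c_i$ summing to $E$; then Lemma~\ref{Fundamenta}(iii) with $X=Y=c$ gives $2L_c^3-3L_c^2+L_c=0$, hence the eigenvalues of $L_c$ lie in $\{0,\tfrac12,1\}$ with $1$ occurring, so $\operatorname{Tr}L_{c_i}\geq 1$ and $\tau(X,X)=\sum_i\lambda_i^2\operatorname{Tr}L_{c_i}>0$. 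This is essentially the route taken in \cite{FK1994}, correctly rebuilt from the lemmas the paper quotes.

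The one soft spot is (i)$\Rightarrow$(iii), and while you rightly flag it, the parenthetical justification you give does not close it. The idempotents $c_i$ produced by your decomposition need not be primitive, so ``$\operatorname{tr}$ of a primitive idempotent equals $1$, $\operatorname{tr}(E)=r$, and linearity'' does not by itself yield $\operatorname{tr}(c_i)>0$: you additionally need that every nonzero idempotent is an orthogonal sum of primitive ones. That bridging step is actually within reach of your own tools (if $c=a+b$ with $a,b$ nonzero idempotents, then $(a+b)^2=a+b$ forces $ab=0$, and the refinement terminates because $\operatorname{Tr}L_a<\operatorname{Tr}L_c$ while every nonzero idempotent has $\operatorname{Tr}L\geq 1$), but the remaining ingredient, $\operatorname{tr}(\text{primitive idempotent})=1$, is genuinely nontrivial: unlike $\tau$, the reduced trace cannot be computed inside $\mathbb{R}[X]$ when $X$ is not regular (already $\operatorname{tr}(E)=r$ whereas $\operatorname{Tr}L_E|_{\mathbb{R}[E]}=1$), so no variant of your $\tau$-argument transfers, and the generic-trace machinery of \cite{FK1994} really is needed there. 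An alternative closing move using only facts quoted in the paper: from (iv) and Albert's theorem ($\mathscr{N}=\ker\tau$), $\mathscr{A}$ is semisimple, hence a direct sum of simple ideals (Theorem~\ref{FRJSS}); on each simple factor, Proposition~\ref{simpleuni} makes $\operatorname{tr}(XY)$ a scalar multiple of the positive definite form $\tau$, the scalar being positive since $\operatorname{tr}(E_j)=r_j>0$; one then checks $\operatorname{tr}$ is additive over the direct sum by a density argument on regular elements. Either way, some input beyond your (i)$\Rightarrow$(iv) computation is unavoidable for (iii), which is precisely why the paper delegates this statement to \cite{FK1994}.
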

 The proof of (ii) $\Rightarrow$ (i) in Proposition~\ref{FRJAC} follows from the fact that if $\langle\cdot,\cdot\rangle$ is an associative inner product on $\mathscr{A}$ and $X^2+Y^2=0$, then $\langle X^2+Y^2, E\rangle=\langle X, X\rangle+\langle Y, Y\rangle=0$, so $X=Y=0$.

\begin{theorem}[\cite{FK1994}]\label{FRJSS}
Every formally real Jordan algebra is semisimple, which decomposes, in a unique way, a direct sum of simple ideals. Besides, every complex semisimple Jordan algebra is the complexification of some formally real Jordan algebra.
\end{theorem}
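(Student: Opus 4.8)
The statement splits into three assertions, and the plan is to treat them in increasing order of difficulty, leaning on the structural results already assembled. First, for semisimplicity: by Proposition~\ref{FRJAC}(iv) the symmetric bilinear form $\tau(X,Y)=\operatorname{Tr}L_{XY}$ is positive definite on a formally real $\mathscr{A}$, hence in particular non-degenerate. The discussion following Theorem~\ref{WPT} records that $\mathscr{A}$ is semisimple precisely when $\tau$ is non-degenerate (equivalently, the radical $\mathscr{N}=\ker\tau$ vanishes, by Albert's identification of the radical with $\ker\tau$). Thus $\mathscr{A}$ is semisimple, and this first part is essentially immediate.

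Next, the existence of a decomposition into simple ideals is built into the definition of semisimplicity used in the paper (a direct product of simple Jordan algebras), so only uniqueness requires work. For uniqueness I would argue that each simple summand is a \emph{minimal} ideal and that the decomposition is forced. Concretely, write $\mathscr{A}=\bigoplus_i S_i=\bigoplus_j T_j$ as two such decompositions, let $E=\sum_j e_j$ be the corresponding splitting of the identity (each semisimple Jordan algebra carries an identity, as recorded after Theorem~\ref{WPT}), and observe that for $x\in S_i$ one has $xe_j\in S_i$ since $S_i$ is an ideal, while $xe_j$ is exactly the $T_j$-component of $x$ because $T_kT_j=0$ for $k\ne j$ and $e_j$ is the unit of $T_j$. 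Hence $S_i=\bigoplus_j(S_i\cap T_j)$; simplicity of $S_i$ forces each $S_i\cap T_j$ to be $0$ or $S_i$, so $S_i\subseteq T_j$ for a single $j$, and since $S_i$ is then an ideal of the simple algebra $T_j$ we get $S_i=T_j$. This yields the desired bijection of summands. (Alternatively one can phrase the argument through the $\tau$-orthogonality of the simple ideals, using the associativity of $\tau$ from Lemma~\ref{tau}.)

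The substantial content is the last assertion, the Jordan analogue of the existence of a compact real form of a complex semisimple Lie algebra. The reduction is clear: a complex semisimple Jordan algebra $\mathscr{A}_{\mathbb C}$ splits, uniquely, into complex simple ideals by the same minimal-ideal argument as above applied over $\mathbb C$, so it suffices to realise each complex simple ideal as the complexification of a simple formally real algebra and then take the direct sum. For the simple case I would invoke the classification of complex simple Jordan algebras together with the matching classification of simple Euclidean Jordan algebras—namely $\operatorname{Sym}(n,\mathbb R)$, $\operatorname{Herm}(n,\mathbb C)$, $\operatorname{Herm}(n,\mathbb H)$, the spin factors $\mathbb R\oplus\mathbb R^{n}$, and the exceptional Albert algebra $\operatorname{Herm}(3,\mathbb O)$—verifying on each family that complexification lands on the corresponding complex simple algebra.

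I expect \emph{this} last step to be the main obstacle. Producing a formally real form directly, without the classification, amounts to constructing a conjugation on $\mathscr{A}_{\mathbb C}$ whose fixed-point subalgebra has positive definite generic trace form, the precise Jordan-theoretic counterpart of Weyl's unitary trick; carrying this out intrinsically would require developing the structure theory of involutions and the generic minimal polynomial on $\mathscr{A}_{\mathbb C}$. Either one borrows the complete list from \cite{FK1994}, or one builds the real form by hand; the former is the route I would take here, since the classification and the explicit complexifications are already available in the cited source, and the remaining verification is then a finite, family-by-family check.
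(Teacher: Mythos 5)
The paper itself offers no proof of this theorem: it is imported verbatim from \cite{FK1994}, so there is no internal argument to compare yours against. Judged on its own merits, your treatment of the first two assertions is correct and essentially self-contained given the paper's preliminaries: positive definiteness of $\tau$ (Proposition~\ref{FRJAC}(iv)) gives non-degeneracy, Albert's identification of the radical with $\ker\tau$ together with the remark following Theorem~\ref{WPT} gives semisimplicity, and your minimal-ideal argument for uniqueness---using that each simple summand $T_j$ carries a unit $e_j$, that $T_kT_j\subseteq T_k\cap T_j=0$ for $k\neq j$, and that multiplication by $e_j$ is therefore exactly the projection onto $T_j$---is the standard one and is airtight.

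The third assertion is where your proof stops being a proof and becomes a citation. The reduction to the simple case is fine (a direct sum of unital formally real algebras is formally real, since $X^2+Y^2=0$ projects to a sum of squares in each ideal, and complexification commutes with direct sums), but the existence of a formally real form of each complex simple Jordan algebra is then settled only by appealing to two classification theorems and a family-by-family verification that you do not carry out. That is a legitimate route, and you are candid about it; note, however, that it rests on the full classification of complex simple Jordan algebras, a result at least as heavy as the statement being proved, whereas a classification-free construction of the real form (the Jordan analogue of Weyl's unitary trick, via an involution with positive definite trace form) is what \cite{FK1994} actually develops. Since the paper's own ``proof'' is nothing but the reference, your dependence on the same source for this step puts you on equal footing with the paper; but as a standalone argument, the last third of your proposal is a correct reduction plus an IOU, not a complete proof.
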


Moreover
\begin{proposition}[\cite{FK1994}]\label{simpleuni}
In a simple formally real Jordan algebra, the associative symmetric bilinear form is unique up to a scalar. In particular, every associative symmetric bilinear form is a scalar multiple of $\operatorname{tr}(XY).$
\end{proposition}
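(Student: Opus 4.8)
The plan is to fix the reference form $\sigma(X,Y):=\operatorname{tr}(XY)$, which by Proposition~\ref{FRJAC}(iii) is positive definite, hence an inner product, and which is associative by Lemma~\ref{trass}. Given an arbitrary associative symmetric bilinear form $\beta$ on $\mathscr{A}$, I would represent it relative to $\sigma$: since $\sigma$ is nondegenerate, there is a unique linear operator $T:\mathscr{A}\to\mathscr{A}$ with $\beta(X,Y)=\sigma(TX,Y)$ for all $X,Y$. The entire proof then reduces to showing that $T$ is a scalar multiple of the identity.

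First I would record two properties of $T$. Symmetry of $\beta$ and of $\sigma$ gives $\sigma(TX,Y)=\beta(X,Y)=\beta(Y,X)=\sigma(TY,X)=\sigma(X,TY)$, so $T$ is self-adjoint with respect to $\sigma$. Next, associativity together with commutativity shows each $L_Y$ is self-adjoint for $\beta$, since $\beta(L_Y X,Z)=\beta(XY,Z)=\beta(X,YZ)=\beta(X,L_Y Z)$, and likewise for $\sigma$ by Lemma~\ref{trass}. Combining these, for all $X,Z$ one has $\sigma(TL_Y X,Z)=\beta(L_Y X,Z)=\beta(X,L_Y Z)=\sigma(TX,L_Y Z)=\sigma(L_Y TX,Z)$; nondegeneracy of $\sigma$ then forces $TL_Y=L_Y T$ for every $Y$, i.e.\ $T$ lies in the centroid of $\mathscr{A}$.

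The main step is to exploit these two facts together. As $T$ is self-adjoint with respect to the positive definite $\sigma$, it is diagonalizable with real eigenvalues, yielding an orthogonal eigenspace decomposition $\mathscr{A}=\bigoplus_\lambda \mathscr{A}_\lambda$. Because $T$ commutes with every $L_Y$, each eigenspace is $L_Y$-invariant: if $Tv=\lambda v$ then $T(Yv)=L_Y Tv=\lambda(Yv)$. Hence each nonzero $\mathscr{A}_\lambda$ satisfies $\mathscr{A}\mathscr{A}_\lambda\subseteq\mathscr{A}_\lambda$, so it is an ideal of $\mathscr{A}$. If $T$ had two distinct eigenvalues, these would be two proper nonzero ideals, contradicting the simplicity of $\mathscr{A}$. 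Therefore $T$ has a single eigenvalue $\lambda$ and $T=\lambda I$, so $\beta=\lambda\sigma=\lambda\operatorname{tr}(XY)$. Since $\operatorname{tr}(XY)$ is itself an associative, symmetric, positive definite (hence nonzero) form, this shows that the space of associative symmetric bilinear forms is one-dimensional and spanned by $\operatorname{tr}(XY)$.

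I expect the only genuine subtlety to be the use of positive definiteness: it is precisely what guarantees that the $\sigma$-self-adjoint operator $T$ is diagonalizable over $\mathbb{R}$, so that its eigenspaces furnish honest ideals; for a merely nondegenerate (indefinite) associative form this eigenspace argument could break down. The reduction of ``associative symmetric form'' to ``centroid element'' is routine linear algebra once the self-adjointness of the $L_Y$ is noted, and the passage from centroid to scalar is forced by simplicity.
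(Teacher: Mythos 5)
Your proof is correct. Note that the paper itself gives no proof of this proposition; it is quoted from \cite{FK1994} (Faraut--Kor\'anyi, Analysis on Symmetric Cones), and your argument is essentially the standard one found there: represent the given associative form $\beta$ as $\beta(X,Y)=\sigma(TX,Y)$ against the trace form $\sigma$, observe that associativity forces $T$ to commute with every $L_Y$, and conclude from self-adjointness (here positive definiteness of $\sigma$, i.e.\ formal reality, is exactly what guarantees real diagonalizability) that the eigenspaces of $T$ are ideals, so simplicity leaves a single eigenvalue and $T=\lambda I$. Your closing remark correctly identifies the one genuine subtlety: for a merely nondegenerate associative reference form the eigenspace argument would not go through over $\mathbb{R}$.
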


\begin{remark}
Combining the results above, one may reasonably consider formally real Jordan algebras as the counterparts of compact real forms in complex semisimple Lie algebras.
\end{remark}

In the sequel, we recall the Jordan frame and the corresponding Peirce decomposition. Assume that $\mathscr{A}$ is a simple formally real Jordan algebra of fixed dimension $n$ and rank $r \geq 2$. By Lemma~\ref{tau}, Lemma~\ref{trass} and Proposition~\ref{simpleuni}, we have
\begin{align}\label{ratio}
\operatorname{tr}(XY)=\frac{r}{n} \tau(X, Y)=\frac{r}{n} \operatorname{Tr}L_{XY}, ~~\forall X, Y \in \mathscr{A}.
\end{align}
A non-zero element $C$ of $\mathscr{A}$ is called a \textit{primitive idempotent}, if $C^2=C$ and $C$ cannot be written as the sum of two non-zero idempotents. It is known that there exists a \textit{Jordan frame}, i.e., a set $\{H_1, H_2, \cdots, H_r\}$ of primitive idempotents of $\mathscr{A}$ such that $H_{1}+H_2+\cdots+H_r$ is the identity element $E$, and $H_iH_j=0$ for all $i \neq j$. Jordan frames in $\mathscr{A}$ are unique up to automorphisms. Moreover, for any $X \in \mathscr{A}$, we can always find a Jordan frame $\{H_1, H_2, \cdots, H_r\}$ such that
\begin{align*}
X=\lambda_1H_1+\cdots+\lambda_rH_r,
\end{align*}
where the numbers $\lambda_1, \cdots, \lambda_r \in \mathbb{R}$ are uniquely determined by $X$. In this case, we have $\operatorname{tr}(X)=\lambda_1+\cdots+\lambda_r$ (compare (\ref{tr})). In particular, $\operatorname{tr}(H_i)=1$ for all $1 \leq i \leq r$, and $\operatorname{tr}(E)=r$.

Now, let us fix a Jordan frame $\{H_1, H_2, \cdots, H_r\}$ of $\mathscr{A}$. By Proposition~\ref{Fundamenta} (i), we have $[L_{H_i}, L_{H_j}]=0$ for all $1 \leq i, j \leq r$. Using Proposition~\ref{Fundamenta} (iii) for $X=Y=H_i$, we obtain
\begin{align*}
2L_{H_i}^3-3 L_{H_i}^2+L_{H_i}=0.
\end{align*}
Therefore, an eigenvalue $\lambda$ of $L_{H_i}$ is a solution of
\begin{align*}
2\lambda^3-3 \lambda^2+\lambda=0,
\end{align*}
whose roots are $0, \frac{1}{2}$ and $1$. Noting that the operators $L_{H_i}$ are diagonalizable and commute with each other, they admit simultaneously diagonalization.
Consider the following subspaces of $\mathscr{A}$
\begin{align*}
\mathscr {A}_{ii}&=\mathscr{A}(H_i,1)=\mathbb{R}H_i, \\
\mathscr {A}_{ij}&=\mathscr{A}(H_i, \frac{1}{2}) \cap \mathscr{A}(H_j, \frac{1}{2}), ~~i < j,
\end{align*}
where $\mathscr{A}(H_i,1)$ and $\mathscr{A}(H_i, \frac{1}{2})$ denote the eigenspaces of $L_{H_i}$ corresponding to eigenvalues $1$ and $\frac{1}{2}$, respectively. Then we have the following famous Peirce decomposition of $\mathscr{A}$.

\begin{theorem}[\cite{FK1994}]\label{Peirce}
Let $\mathscr{A}$ be a simple formally real Jordan algebra of rank $r$. Then $\mathscr{A}$ decomposes in
the following orthogonal direct sum (with respect to an associative inner product, thus for all)
\begin{align*}
\mathscr{A}=\bigoplus_{i\leq j}\mathscr{A}_{ij}=\bigoplus_{i=1}^r\mathbb{R}H_i\bigoplus_{i<j} \mathscr{A}_{ij}.
\end{align*}
Moreover,
\begin{align*}
\mathscr{A}_{ij}\mathscr{A}_{ij} &\subset \mathscr{A}_{ii}+\mathscr{A}_{jj}, \\
\mathscr{A}_{ij}\mathscr{A}_{jk} &\subset \mathscr{A}_{ik}, \\
\mathscr{A}_{ij}\mathscr{A}_{kl}&=\{0\}, \text { if }\{i, j\} \cap\{k, I\}=\varnothing,
\end{align*}
where $1 \leq i, j, k, l \leq r$.
\end{theorem}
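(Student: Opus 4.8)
The plan is to diagonalise the commuting family $L_{H_1},\dots,L_{H_r}$ simultaneously, identify its joint eigenspaces, and then reduce the three multiplication rules to the single-idempotent Peirce relations together with an eigenvalue count. First I would fix an associative inner product $\langle\cdot,\cdot\rangle$, which exists by Proposition~\ref{FRJAC}. Associativity and commutativity give $\langle L_{H_i}X,Y\rangle=\langle X,L_{H_i}Y\rangle$, so each $L_{H_i}$ is self-adjoint; as recorded in the excerpt they pairwise commute (Lemma~\ref{Fundamenta}(i) with $X=H_i$, $Y=H_j$, using $H_iH_j=0$) and each is diagonalisable with spectrum in $\{0,\tfrac12,1\}$. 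A commuting family of self-adjoint operators is simultaneously orthogonally diagonalisable, so $\mathscr{A}$ is the orthogonal direct sum of the joint eigenspaces $\mathscr{A}(\lambda_1,\dots,\lambda_r)$, $\lambda_m\in\{0,\tfrac12,1\}$; by the uniqueness of the associative form (Proposition~\ref{simpleuni}) this orthogonality is independent of the chosen inner product, which accounts for the parenthetical remark.

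Since $\sum_m H_m=E$, we have $\sum_m L_{H_m}=\mathrm{id}$, so a joint eigenspace is nonzero only when $\sum_m\lambda_m=1$. The tuples in $\{0,\tfrac12,1\}^r$ with sum $1$ are exactly those with a single coordinate equal to $1$ and those with exactly two coordinates equal to $\tfrac12$; the former give $\mathscr{A}_{ii}$ and the latter give $\mathscr{A}_{ij}$, $i<j$. This already yields the claimed orthogonal decomposition. To match the definitions from the excerpt I would note that on $\mathscr{A}(H_i,\tfrac12)\cap\mathscr{A}(H_j,\tfrac12)$ the operator $\sum_{m\ne i,j}L_{H_m}=\mathrm{id}-L_{H_i}-L_{H_j}$ equals $0$, and being a sum of self-adjoint positive-semidefinite operators it forces each $L_{H_m}$ ($m\ne i,j$) to vanish there; hence this space is the joint eigenspace $\mathscr{A}_{ij}$. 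The same argument gives $\mathscr{A}_{ii}=\mathscr{A}(H_i,1)$, and since $\mathscr{A}(H_i,1)$ is a formally real subalgebra with unit $H_i$, any extra dimension would produce (via the spectral decomposition) an idempotent $f$ with $0\ne f\ne H_i$ and $H_i=f+(H_i-f)$, contradicting primitivity; thus $\mathscr{A}_{ii}=\mathbb{R}H_i$.

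For the multiplication rules the key point is that, once the single-idempotent relations
\begin{align*}
\mathscr{A}(c,1)\mathscr{A}(c,0)&=0, & \mathscr{A}(c,1)\mathscr{A}(c,\tfrac12)&\subset\mathscr{A}(c,\tfrac12),\\
\mathscr{A}(c,0)\mathscr{A}(c,\tfrac12)&\subset\mathscr{A}(c,\tfrac12), & \mathscr{A}(c,\tfrac12)\mathscr{A}(c,\tfrac12)&\subset\mathscr{A}(c,1)+\mathscr{A}(c,0)
\end{align*}
are available for each $c=H_m$, the rest is eigenvalue bookkeeping against the constraint $\sum_m\lambda_m=1$. For $a\in\mathscr{A}_{ij}$ and $b\in\mathscr{A}_{kl}$ with $\{i,j\}\cap\{k,l\}=\varnothing$, the single-idempotent rules place $ab$ in the joint eigenspace with four coordinates equal to $\tfrac12$ (eigenvalue sum $2$), which is $\{0\}$; so $ab=0$. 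For $a\in\mathscr{A}_{ij}$, $b\in\mathscr{A}_{jk}$ with $i,j,k$ distinct, the rules give $H_i$- and $H_k$-eigenvalue $\tfrac12$, force the $H_j$-component into $\mathscr{A}(H_j,1)+\mathscr{A}(H_j,0)$, and eigenvalue $0$ elsewhere; projecting onto joint eigenspaces and discarding the tuple whose coordinates sum to more than $1$ leaves $ab\in\mathscr{A}_{ik}$. The rule $\mathscr{A}_{ij}\mathscr{A}_{ij}\subset\mathscr{A}_{ii}+\mathscr{A}_{jj}$ is entirely analogous.

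The hard part will be the single-idempotent Peirce relations themselves, where the Jordan identity is used essentially. I would start from Lemma~\ref{Fundamenta}(iii) with $X=c$, which gives $(L_{cY}-L_cL_Y)(\mathrm{id}-2L_c)=0$; since $\mathrm{id}-2L_c$ is invertible on $\mathscr{A}(c,1)\oplus\mathscr{A}(c,0)$ and zero on $\mathscr{A}(c,\tfrac12)$, this forces $L_{cY}=L_cL_Y$ on $\mathscr{A}(c,1)\oplus\mathscr{A}(c,0)$. Feeding eigenvectors into this identity yields $\mathscr{A}(c,1)\mathscr{A}(c,0)=0$ at once: for $u\in\mathscr{A}(c,1)$ and $w\in\mathscr{A}(c,0)$ one obtains $uw=c(uw)$ and $c(uw)=0$ simultaneously, so $uw\in\mathscr{A}(c,1)\cap\mathscr{A}(c,0)=\{0\}$. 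The relations involving $\mathscr{A}(c,\tfrac12)$ follow by substituting eigenvectors into Lemma~\ref{Fundamenta}(i),(ii). These operator computations are routine but constitute the technical core of the theorem.
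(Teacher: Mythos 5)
Your proposal is correct and follows the same route the paper sketches immediately before the theorem (and that of the cited source \cite{FK1994}): simultaneous orthogonal diagonalization of the commuting self-adjoint operators $L_{H_i}$, the eigenvalue-sum constraint $\sum_m\lambda_m=1$ coming from $\sum_m L_{H_m}=\mathrm{id}$, and the single-idempotent Peirce relations extracted from Lemma~\ref{Fundamenta}. One small completion: your list of single-idempotent rules should also record that $\mathscr{A}(c,1)$ and $\mathscr{A}(c,0)$ are subalgebras --- this is what the bookkeeping for $\mathscr{A}_{ij}\mathscr{A}_{ij}\subset\mathscr{A}_{ii}+\mathscr{A}_{jj}$ actually requires at the indices $m\notin\{i,j\}$, and what your argument for $\mathscr{A}_{ii}=\mathbb{R}H_i$ implicitly uses --- but this follows at once from the identity $L_{cY}=L_cL_Y$ on $\mathscr{A}(c,1)\oplus\mathscr{A}(c,0)$ that you already derived, so it is a routine addition rather than a gap.
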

Note that $H_1+H_2+\cdots+H_r=E$, then we have $\operatorname{Tr} L_X=0=\operatorname{tr}(X)$ for any $X \in \oplus_{i<j}\mathscr{A}_{ij}$.

\begin{remark}\label{classification}
Every simple formally real Jordan algebra is isomorphic to one of the following list, where the number  $d:=\operatorname{dim} \mathscr{A}_{ij}$ does not depend on $i,j.$
\begin{table}[H]\label{classificationlist}
	\centering
	\begin{tabularx}{\textwidth}{|l|X|X|l|l|}
		\toprule
	 \quad\quad\quad\textbf{$\mathscr{A}$} & \quad\quad\quad\quad\textbf{Objects} &\quad\quad\textbf{Multiplication rule}&\textbf{Rank}\quad& \quad\textbf{$d$}  \\
		\midrule
\quad$\operatorname{Sym}(n,\mathbb{R})$  &$n\times n$ self-adjoint real matrices  &\quad\quad$A \circ B=\frac{1}{2}(A B+B A)$  &\quad$n$\quad &\quad$1$\\
       \midrule
\quad$\operatorname{Herm}(n,\mathbb{C})$  &$n\times n$ self-adjoint complex matrices  &\quad\quad$A \circ B=\frac{1}{2}(A B+B A)$  &\quad$n$\quad &\quad$2$\\
      \midrule
\quad$\operatorname{Herm}(n,\mathbb{H})$  &$n\times n$ self-adjoint  quatemionic matrices  &\quad\quad$A \circ B=\frac{1}{2}(A B+B A)$  &\quad$n$\quad &\quad$4$\\
     \midrule
\quad$\operatorname{Herm}(3,\mathbb{O})$  &$3\times 3$ self-adjoint   octonionic  matrices  &\quad\quad$A \circ B=\frac{1}{2}(A B+B A)$  &\quad$3$\quad &\quad$8$\\
     \midrule
The spin factors
& $\mathbb{R} \times \mathbb{R}^{n-1}$ and $f$ is a positive symmetric bilinear form on $\mathbb{R}^{n-1},~n\geq 3$  & $
(s, x)(t, y)
=(st+f(u, v), tx+s y)
$
 &\quad$2$ \quad&$n-2$\\
		\bottomrule
	\end{tabularx}
	\caption{Simple formally real Jordan algebra}
\end{table}%
\end{remark}
\end{appendix}

\section*{Acknowledgement}
We would like to thank Professor Jacques Faraut for providing us an elegant proof of Lemma~\ref{inequality}. This paper is partially supported by NSFC (Grant Nos. 11701300, 11931009 and 12131012), NSF of Jiangsu (Grant
No. BK20230803), the Fundamental Research Funds for the Central Universities (Grant No. 4007012303), and Guangdong Basic and Applied Basic Research Foundation (Grant No. 2023A1515010001)

\end{document}